\definecolor{LightGreen}{rgb}{.9, 1, .9}
\definecolor{LightRed}{rgb}{1, .9, .9}
\newcolumntype{g}{>{\columncolor{LightGreen}}c}
\newcolumntype{r}{>{\columncolor{LightRed}}c}
\newtheorem{theorem}{Theorem}
\newtheorem{lemma}[theorem]{Lemma}
\newtheorem{corollary}[theorem]{Corollary}
\newtheorem{proposition}[theorem]{Proposition}
\newtheorem{assumption}[theorem]{Assumption}
\newtheorem*{lemma*}{Lemma}
\newenvironment{manualtheorem}[1]{%
  \manualtheoreminner
}{\endmanualtheoreminner}
\def\thm@space@setup{\thm@preskip=5pt
\thm@postskip=0pt}
\theoremstyle{definition}
\newtheorem{problem}{Problem}
\theoremstyle{remark}
\newtheorem{remark}{Remark}
\newcommand{\R}{\mathbb{R}}
\DeclareMathOperator{\diag}{diag}
\DeclareMathOperator{\ones}{ones}
\newcommand{\defeq}{\vcentcolon=}
\newcommand{\eqdef}{=\vcentcolon}
\newcommand{\norm}[1]{\left\lVert#1\right\rVert}
\newcommand{\normt}[1]{\lVert#1\rVert}
\DeclareMathOperator*{\argmin}{arg\,min}
\begin{document}

\title{
Differentiable Robust Model Predictive Control
}


\author{\IEEEauthorblockN{Alex Oshin}
\IEEEauthorblockA{Georgia Institute of Technology\\
\texttt{alexoshin@gatech.edu}}
\and
\IEEEauthorblockN{Hassan Almubarak}
\IEEEauthorblockA{Georgia Institute of Technology\\
\texttt{halmubarak@gatech.edu}}
\and
\IEEEauthorblockN{Evangelos A. Theodorou}
\IEEEauthorblockA{Georgia Institute of Technology\\
\texttt{evangelos.theodorou@gatech.edu}}}



%

\maketitle
\vspace*{-1.0cm}
\begin{abstract}
Deterministic model predictive control (MPC), while powerful, is often insufficient for effectively controlling autonomous systems in the real-world. Factors such as environmental noise and model error can cause deviations from the expected nominal performance. Robust MPC algorithms aim to bridge this gap between deterministic and uncertain control. However, these methods are often excessively difficult to tune for robustness due to the nonlinear and non-intuitive effects that controller parameters have on performance. To address this challenge, we first present a unifying perspective on differentiable optimization for control using the implicit function theorem (IFT), from which existing state-of-the art methods can be derived. Drawing parallels with differential dynamic programming, the IFT enables the derivation of an efficient differentiable optimal control framework. The derived scheme is subsequently paired with a tube-based MPC architecture to facilitate the automatic and real-time tuning of robust controllers in the presence of large uncertainties and disturbances. The proposed algorithm is benchmarked on multiple nonlinear robotic systems, including two systems in the MuJoCo simulator environment and one hardware experiment on the Robotarium testbed, to demonstrate its efficacy.
\end{abstract}

\IEEEpeerreviewmaketitle

\section{Introduction}
\label{sec:introduction}

\Ac{MPC} is a predominant approach for real-time optimization-based motion planning and control across robotics~\citep{poignet2000nonlinear, wieber2006trajectory}, aerospace~\citep{eren2017model, di2018real, malyuta2021advances}, and process systems~\citep{qin2003survey}. One of the main strengths of \ac{MPC} is that it reformulates the optimal control problem as optimization over an open-loop control sequence that is applied successively online. This allows for an implicit form of feedback since the controls are reoptimized from the current state of the system at every time step of the problem~\citep{rawlings2017model}. 

However, when applied to autonomous systems acting in the real-world, deterministic \ac{MPC} is often unable to respond to large disturbances that occur due to environmental factors, model uncertainty, etc. Additionally, under such large disturbances the open-loop optimal control may be infeasible or result in unsafe solutions, which, e.g., crash into obstacles. This motivates the development of control algorithms that explicitly account for unknown disturbances in the dynamics and guarantee robustness. This field of study is known as \textit{robust control}~\citep{safonov2012origins,petersen2014robust}, for which two classes of algorithms exist. The first formulates the problem as a min-max optimization, finding a control policy that minimizes the cost under worst-case disturbances. These methods are generally impractical as they require optimization over a class of infinite-dimensional control policies and are often too conservative in practice~\citep{mayne2014model,rawlings2017model}.

\begin{figure}[h]
\centering
\vspace{-0.6cm}
\includegraphics[width=0.9\linewidth]{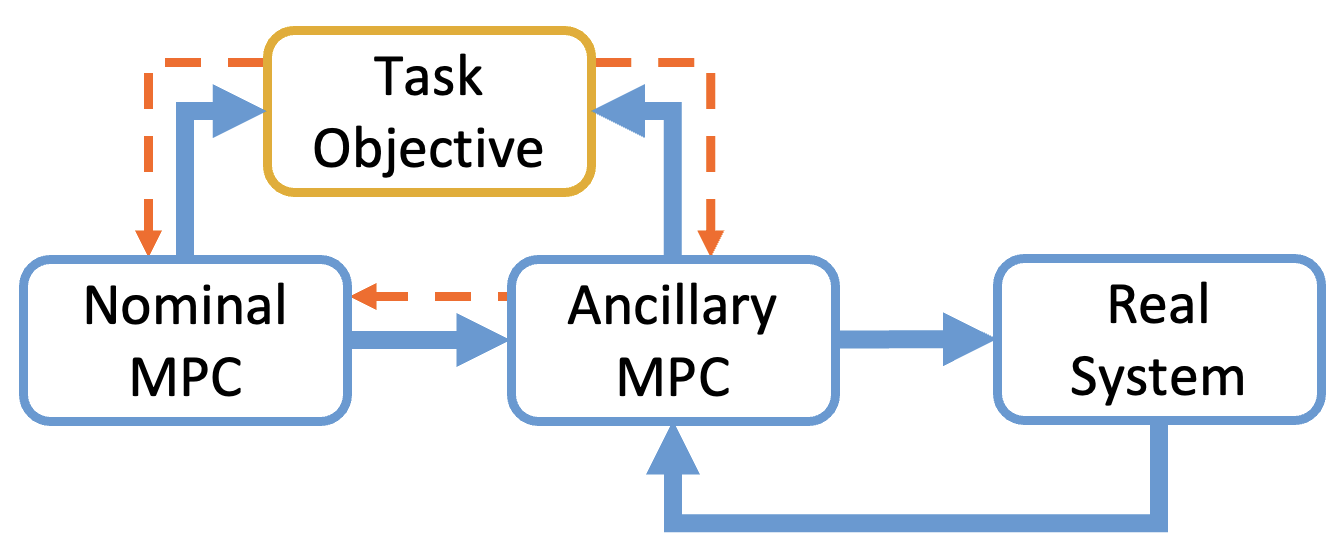}
\caption{Proposed differentiable robust MPC architecture. Orange dashed arrows show how gradients are passed in our architecture.}
\label{fig:architecture}
\end{figure}

\begin{figure}[h]
\centering
\vspace{-0.5cm}
\includegraphics[width=0.75\linewidth]{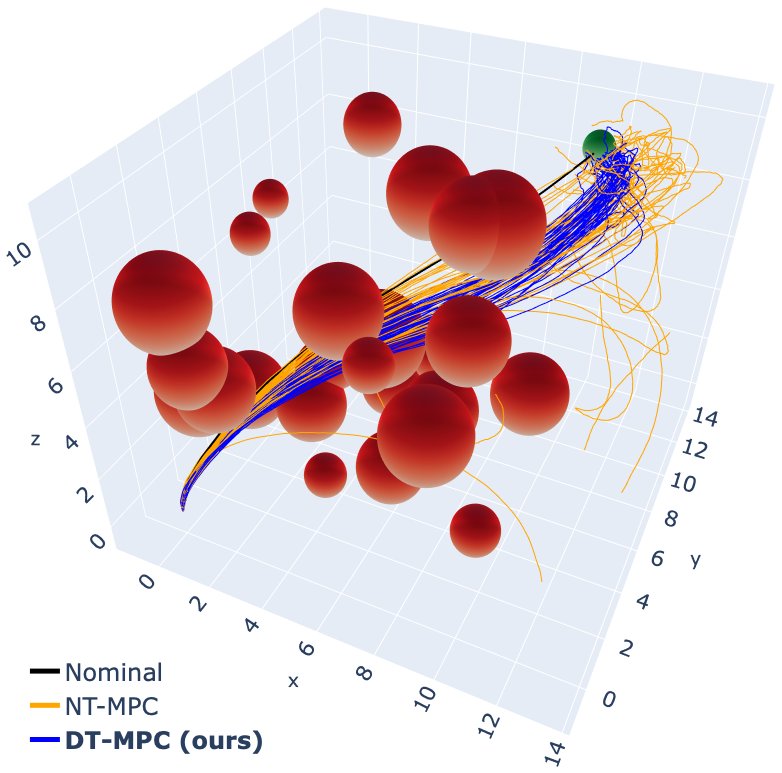}
\caption{Controlled quadrotor trajectories subject to large disturbances. 50 trajectories are plotted for each algorithm. `Nominal' corresponds to the reference trajectory being tracked by the two algorithms. Our proposed differentiable tube-based MPC (DT-MPC) is safer and more robust than the baseline nonlinear tube-based MPC (NT-MPC).}
\label{fig:quadrotor_comparison}
\vspace{-2mm}
\end{figure}

On the other hand, tube-based approaches have found large success as they are able to leverage the strengths of \ac{MPC}~\cite{limon2010robust, mayne2011tube,buckner2018tube,williams2018robust,mammarella2019attitude}. These approaches are robust to uncertainty in the dynamics by dividing the online control problem into two optimization layers: a nominal MPC layer and an ancillary MPC layer. The nominal MPC generates a nominal reference trajectory $\bar{\boldsymbol{x}}$ in the absence of noise or disturbances. The role of the ancillary MPC is to track the reference trajectory subject to uncertainty in the state of the system. Under certain regularity assumptions, the true state $x$ is guaranteed to lie within some bounded distance from the nominal state $\bar{x}$ --- plotting multiple realizations of the state will look like a ``tube'' centered around the nominal trajectory, from which these approaches derive their name~\citep{mayne2011robust,rawlings2017model}. The use of the ancillary controller makes tube-based approaches more flexible than standard open-loop control through the additional degree of freedom available~\citep{mayne2011tube}, while the use of MPC keeps these approaches computationally efficient, particularly for real-time applications, e.g., vehicle trajectory planning. While numerous approaches exist that allow the nominal controller of the tube-based MPC to respond to the environment, they are often heuristic in nature or require running an additional optimization, e.g., to select the best candidate starting state for the nominal MPC at every time step~\cite{mayne2011tube}. 


The main contribution of this work is the development of a novel differentiable tube-based MPC (DT-MPC) framework for safe, robust control. Safety is enforced through the use of discrete barrier states~\citep{almubarak2022safety}, which enables scalable constraint satisfaction such that safe planning and control can be executed in real-time. A general \emph{differentiable optimal control} algorithm is derived that describes how to efficiently pass derivatives through both layers of the MPC architecture (nominal and ancillary controllers). \cref{fig:architecture} summarizes the proposed differentiable robust MPC architecture and \cref{fig:quadrotor_comparison} demonstrates the proposed DT-MPC algorithm solving a complex quadrotor navigation task in the presence of numerous obstacles. Trajectories controlled under DT-MPC remain safe under large uncertainty and arrive at the goal with higher probability than the baseline.

Our proposed approach performs gradient updates on the controller parameters in a principled manner and has computational complexity equivalent to that of a single finite-horizon LQR solve. The computation scales linearly with the look-ahead horizon of the MPC, and, therefore, our proposed algorithm is comparable in complexity to conventional MPC. The efficacy of the proposed approach is benchmarked through multiple Monte Carlo simulations on five nonlinear robotics systems, including two systems in the MuJoCo simulator environment~\citep{todorov2012mujoco}.
A hardware experiment on the Robotarium \cite{wilson2020robotarium} demonstrates the ability of the proposed DT-MPC to adapt to an out-of-distribution test case. Moreover, timing and gradient error comparisons against state-of-the-art differentiable optimal control methods are provided.

In summary, the main contributions of this work include:
\begin{enumerate}
    \item the derivation of a general differentiable optimal control framework enabled through a novel application of the implicit function theorem,
    \item the proposition of a differentiable tube-based MPC algorithm that allows the online adaptation of controller parameters to maximize success and safety,
    \item extensive benchmarks on multiple nonlinear robotics systems both in simulation and on hardware showing the applicability and generality of the proposed approach.
\end{enumerate}

In \cref{sec:tube_mpc}, tube-based MPC is reviewed along with embedded discrete barrier states, which will be used in the proposed algorithm to enforce safety. \Cref{sec:doc} presents a generalized differentiable optimal control framework, relating recent advancements in differentiable optimization-based control. In \cref{sec:dt_mpc}, a differentiable robust MPC algorithm is presented. Experiments are provided on various robotics systems in \cref{sec:experiments}. Concluding remarks are given in \cref{sec:conclusion}.

\section{Mathematical Background}
\label{sec:tube_mpc}

\subsection{Tube-based Model Predictive Control}
\label{subsec:tube_mpc}

Tube-based MPC approaches design a robust controller for the uncertain, safety-critical system
\begin{align}
    x_{t + 1} = f_\text{true}(x_t, u_t) = f(x_t, u_t) + w_t,  \label{eq:full_dynamics}
\end{align}
where $t \in \mathbb{N}$ denotes the task time step, $f_\text{true}$ describes the true dynamics of the system (e.g., reality), and $f$ is a smooth function which is a model or an approximation of~$f_\text{true}$ (e.g., physics-based or learned dynamics) that will be used for MPC. $w_t \in \mathbb{W}$, where $\mathbb{W} \subseteq \mathbb{R}^{n_w}$ is a convex and compact set containing the origin, is a bounded disturbance, which exists due to model uncertainty, random noise, etc.~\citep{mayne2011tube}. The system is subject to control constraints $u_t \in \mathbb{U} \subset \mathbb{R}^{n_u}$, such as physical actuator limits, and safety constraints $x_t \in \mathbb{X} \subset \mathbb{R}^{n_x}$ that determine the safe operating region of the system of interest. It is assumed that both sets are (strict) superlevel sets of some continuously differentiable functions.

Development of a successful model predictive controller for \cref{eq:full_dynamics} relies on the fact that $f$ is a good approximation of the true dynamics~$f_\text{true}$. However, in practice, the system under study is subject to large dynamical uncertainty through effects such as unmodeled physics, random noise, etc., that results in some error between $f_\text{true}$ and $f$, which is captured by the additive disturbance $w$ in the formulation above. The objective will be to bound the deviation of the true state trajectory $\boldsymbol{x} \defeq ( x_0, x_1, \ldots )$, subject to disturbances, from a reference trajectory $\bar{\boldsymbol{x}} \defeq ( \bar{x}_0, \bar{x}_1, \ldots )$ that is determined online by solving the nominal MPC problem defined below. Throughout this work, we will use bold letters, such as $\boldsymbol{x}$, to represent trajectories of variables. For notational compactness, we define the state-control trajectory as $\boldsymbol{\tau} \defeq (\boldsymbol{x}, \boldsymbol{u}) = (x_0, u_0, x_1, u_1, \ldots, x_N)$, with planning horizon $N \in \mathbb{N}$. The nominal MPC problem is, therefore, given as:
\begin{problem}[Nominal MPC] \label{prob:nominal_mpc}
\begin{align*}
\bar{\boldsymbol{\tau}} = & \argmin_{\boldsymbol{\tau}} \bar{J}(\boldsymbol{\tau}) \defeq \sum_{k = 0}^{N - 1} \bar{\ell}(x_k, u_k) + \bar{\phi}(x_N), \\
\text{subject to} \ \ & x_{k + 1} = f(x_k, u_k), \quad x_0 = \bar{x}_t, \\ 
& u_k \in \mathbb{U}, \quad x_k \in \bar{\mathbb{X}} \subset \mathbb{X}, 
\end{align*}
where $\bar{x}_t$ denotes the nominal state at MPC time step $t$ since the controller is employed in a receding horizon fashion and re-optimized at every successor state and $k$ is used to denote predictive quantities. $\bar{\ell}:~\mathbb{R}^{n_x}~\times~\mathbb{R}^{n_u}~\to~\mathbb{R}$ and $\bar{\phi}:~\mathbb{R}^{n_x}~\to~\mathbb{R}$ are the nominal running and terminal cost functions, respectively, which determine the task to be solved.
The set $\bar{\mathbb{X}}$ represents a ``tightened'' constraint set that ensures the true state can be kept safe in the presence of uncertainty~\citep{mayne2011tube,rawlings2017model}. In other words, the nominal state must be kept sufficiently far from the boundary of the true constraint set $\mathbb{X}$, such that the true state of the system will not violate the safety constraints when subject to disturbances.
\end{problem}

The nominal controller is often employed in isolation for the control of \cref{eq:full_dynamics} by solving \cref{prob:nominal_mpc} online from every true state $x_t$. This approach is inherently robust to small uncertainty providing one explanation for the success of nominal MPC in practice, even when \cref{prob:nominal_mpc} is not solved fully at every time step due to restrictions on the available computational resources~\cite{pannocchia2011conditions,allan2017inherent}.
However, this approach is only valid when the error between $f_\text{true}$ and $f$ is small, i.e., when the disturbances are small. A potential failure mode of nominal MPC when applied for the control of the true system is safety violations caused by this large predictive error \cite{mayne2011tube}.

To address this shortcoming, tube-based MPC augments the nominal controller with a feedback model predictive controller that drives the state of the true system towards the nominal trajectory. This controller is known as the \emph{ancillary MPC}, and solves online at every true state $x_t$ the following optimization:
\begin{problem}[Ancillary MPC] \label{prob:ancillary_mpc}
\begin{gather*}
\begin{aligned}
\boldsymbol{\tau}^* = \argmin_{\boldsymbol{\tau}} J(\boldsymbol{\tau}, t) \defeq {}&\sum_{k = 0}^{N - 1} \ell(x_k, u_k, \bar{x}_k, \bar{u}_k) \\
&\quad + \phi(x_N, \bar{x}_N),
\end{aligned} \\
\text{subject to} \ \  x_{k + 1} = f(x_k, u_k), \quad x_0 = x_t, \\ 
 u_k \in \mathbb{U}, \quad x_k \in \mathbb{X}, 
\end{gather*}
where $\ell: \mathbb{R}^{n_x} \times \mathbb{R}^{n_u} \times \mathbb{R}^{n_x} \times \mathbb{R}^{n_u} \to \mathbb{R}$ and $\phi: \mathbb{R}^{n_x} \times \mathbb{R}^{n_x} \to \mathbb{R}$ are the ancillary running and terminal cost functions, respectively, often chosen as $\ell = \norm{x - \bar{x}}_{Q}^2 + \norm{u - \bar{u}}_{R}^2$ and $\phi = \norm{x - \bar{x}}_{Q}^2$ for some positive definite matrices $Q$ and $R$. In \cref{sec:dt_mpc}, differentiable optimization will be applied to make $Q$ and $R$ learnable, allowing for online adaptation and tuning of the ancillary MPC.
Due to the disturbances entering the system~\cref{eq:full_dynamics}, the true state~$x_t \neq \bar{x}_t$ in general. The role of \cref{prob:ancillary_mpc} is to drive the true state of the system towards the reference trajectory~$\bar{\boldsymbol{\tau}}$, under the nominal predictive model $f$. The disturbances affect the optimization through perturbations on the initial state of~\cref{prob:ancillary_mpc}. For more details on the tube-based MPC approach and its analyses, the reader is referred to the works of \citet{mayne2011tube} and \citet{rawlings2017model}.
\end{problem}


\subsection{Embedded Barrier States} \label{subsec: dbas}
Consider the safe set $\mathbb{X}$ that is defined as the strict superlevel set of a continuously differentiable function $h: \mathbb{R}^{n_x} \to \R$ such that
\begin{align*}
    \mathbb{X} \defeq \{ x \in \mathbb{R}^{n_x} \ | \ h(x) > 0 \} .
\end{align*}
The goal is to render the safe set $\mathbb{X}$ forward invariant, i.e., once the system is in the set, it stays in it for all future times of operation. This is accomplished by defining an appropriate barrier function $B: \mathbb{X} \to \mathbb{R}$ over the safety condition $h$ whose value ``blows up'' as the state of the system approaches the unsafe region. The idea of \textit{embedded barrier states} is to augment the system with the state of the barrier and define a new control problem that, when solved, guarantees safety along with other performance objectives~\citep{almubarak2022safety,almubarak2021safety}. The \ac{DBaS}, denoted $b_k$, is defined by the dynamics
\begin{align*}
    b_{k + 1} = g(x_k, u_k, b_k) = B(h(f(x_k, u_k)) - \gamma (B(h(x_k)) - b_k) ,
\end{align*}
where $\gamma \in [-1, 1]$.
Examples of suitable barrier functions include the inverse barrier $B(\zeta) = 1 / \zeta$ and the log barrier $B(\zeta) = -\log \zeta$. The state of the system to be controlled is then augmented by the barrier state $\hat{x}_k = (x_k, b_k)$, resulting in the augmented dynamics
\begin{align}
    \hat{x}_{k + 1} = \hat{f}(\hat{x}_k, u_k) = \begin{bmatrix} f(x_k, u_k) \\ g(x_k, u_k, b_k) \end{bmatrix} . \label{eq:safety_embedded_sys}
\end{align}
The system \cref{eq:safety_embedded_sys} is called the \emph{safety-embedded system}. The set $\mathbb{X}$ is, therefore, rendered forward invariant if and only if the barrier state of the system remains bounded for all time, given that the system starts in the safe set ~\citep{almubarak2022safety}.

Using the \ac{iLQR}~\citep{li2004iterative} with embedded DBaS guarantees positive definiteness of the Hessians of the value function, providing guarantees of improvement of safe solutions and convergence (see Theorems 2 and 3 of \cite{almubarak2022safety}). The algorithm enjoys a computational efficiency and fast convergence as shown by \citet{almubarak2022safety} and in the MPC formulation by \citet{cho2023model} compared to other safe DDP-based approaches, such as penalty methods, control barrier functions, and augmented Lagrangian methods, which require multiple optimization loops and thus do not scale well for real-time applications.

Nonetheless, in the interest of the proposed work and in light of the requirement of starting in the safe region, the solution to the open-loop optimal control problem may be infeasible. A well established solution for barrier-based methods is the \textit{relaxed} barrier function, which was presented in the linear MPC formulation by \citet{feller2016relaxed} using the logarithmic barrier function. In dependence of the underlying relaxation, convergence guarantees and constraint satisfaction were provided. Hence, in this work, recursive feasibility of the tube-based MPC is pledged through the use of a \emph{relaxed embedded barrier state} that is similar conceptually to the idea of relaxed barrier functions~\citep{feller2016relaxed}. In essence, the barrier function~$B$ is replaced with the \textit{relaxed} barrier function by taking a Taylor series approximation of the safety function~$h$ around a point $\alpha$ close to the constraint ``unsatisfaction'', i.e., close to~$h=0$, while reserving the advantages of using barrier states within the iLQR. For example, for the inverse barrier function, which is used throughout this work, we take a quadratic approximation and define the strictly monotone and continuously differentiable \textit{relaxed} barrier function:
\begin{align*}
B_\alpha(\zeta) = \begin{cases}
    1 / \zeta & \text{ if } \zeta \geq \alpha ,\\
    1 / \alpha - (\zeta - \alpha)/\alpha^2 + (\zeta - \alpha)^2 / \alpha^3 & \text{ if } \zeta < \alpha .
\end{cases}
\end{align*}
It is worth noting that the relaxation hyperparameter $\alpha$ determines the amount of relaxation such that $\lim_{\alpha \to 0} B_{\alpha}(\zeta) \to B(\zeta)$. Enabled through differentiable optimization, the framework described in \cref{sec:dt_mpc} allows an adaptation scheme to be derived that modulates the amount of relaxation online through updating $\alpha$ based on task performance and feasibility. From the safe MPC via barrier methods perspective, the proposed work provides a novel expansion of the works \cite{almubarak2022safety}, \cite{cho2023model} and \cite{feller2016relaxed} to a tube-based MPC algorithm in which the barrier state's parameters, e.g., dynamics and cost penalization, can be adapted or auto-tuned which greatly improves the controls performance, as we show and discuss in more detail in our experimentation provided in \cref{sec:experiments}.

In the sequel, we will use $x$ and $f$ to represent the embedded state and the safety-embedded dynamics in our development of the main proposition for notational simplicity. It should be understood that we are working with the safety-embedded system and safety is guaranteed by the boundedness of the barrier state.


\section{Generalized Differentiable Optimal Control through the Implicit Function Theorem}
\label{sec:doc}


We derive a general differentiable optimal control framework for computing gradients through the solution of a parameterized optimal control problem ($\boldsymbol{\tau}^*$ of \cref{prob:parameterized_oc} below). This enables a principled manner through which the parameters of the control problem can be automatically adapted to maximize both safety as well as the desired task performance. The key result here is that the differentiable optimal control algorithm has the same computational complexity as a single finite-horizon LQR solve, namely $O(N)$ in time. This connection with LQR control justifies the use of a Gauss-Newton approximation when backpropagating the derivatives through time, allowing the dynamics and cost derivatives to be reused between the optimal control solver (e.g., iLQR) and the differentiable optimization. Furthermore, we show how the gradient can be accumulated across the trajectory at an $O(1)$ memory cost, independent of the time horizon. This prevents needing to store the entire trajectory of derivatives in memory at once, which is an important consideration for highly parameterized problems, e.g., when neural networks are used to model the dynamics or cost function.

\subsection{Differentiable Optimization}
\label{subsec:diff_opt}

Our work is inspired by recent developments in implicit differentiation --- also known as \emph{differentiable optimization} --- which is an emerging trend in machine learning~\citep{amos2017optnet,bai2019deep,bolte2021nonsmooth,blondel2022efficient} that studies how to embed optimization processes as end-to-end trainable components into learning-based architectures.
Differentiable optimization has seen large success in a wealth of fields such as hyperparameter tuning~\citep{bertrand2020implicit,lorraine2020optimizing}, meta-learning~\citep{franceschi2018bilevel,rajeswaran2019meta}, and model-based reinforcement learning and control~\citep{amos2018differentiable,jin2020pontryagin,dinev2022differentiable,grandia2023doc}. This subsection gives a brief overview of differentiable optimization in the context of general optimization problems. These results will be used afterwards to develop a general differentiable optimal control methodology and a differentiable robust MPC framework.


Consider the unconstrained minimization problem
\begin{align} \label{eq:min_prob}
    z^*(\theta) = \argmin_z \varphi(z, \theta),
\end{align}
where $\varphi: \R^{n_z} \times \R^{n_\theta} \to \R$ is twice continuously differentiable, $z$ is the optimization variable, and $\theta$ is a vector of parameters that influence the objective function to be minimized. The \ac{IFT} provides a precise relation between the optimal solution and the learning parameters by defining $z^*$ as an implicit function of $\theta$. Furthermore, it gives an expression for the derivative $\pdv{z^*}{\theta}$ --- the Jacobian of the solution with respect to the parameters --- and the conditions under which this derivative is defined. This development is necessary when the minimization process \cref{eq:min_prob} is embedded as a component within, e.g., a deep learning architecture. This allows the parameters of the optimization process to be optimized in an end-to-end fashion through the use of backpropagation and other gradient-based algorithms.
\begin{theorem}[Implicit function theorem~\citep{krantz2002implicit}]\label{thm:ift}
Let $F: \R^{n_z} \times \R^{n_\theta} \to \R^{n_z}$ be a continuously differentiable function. Fix a point $(z_0, \theta_0)$ such that $F(z_0, \theta_0) = 0$. If the Jacobian matrix of partial derivatives $\pdv{F}{z}(z_0, \theta_0)$ is invertible, then there exists a function $z^*(\cdot)$ defined in a neighborhood of $\theta_0$ such that $z^*(\theta_0) = z_0$ and
\begin{align*}
    \pdv{}{\theta}z^*(\theta) = - \left(\pdv{}{z}F(z^*(\theta), \theta)\right)^{-1} \pdv{}{\theta}F(z^*(\theta), \theta) .
\end{align*}
\end{theorem}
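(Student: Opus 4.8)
The plan is to establish the theorem in two stages: first prove existence and continuity of the implicit map $z^*$ via a contraction-mapping argument, and then bootstrap to differentiability and read off the derivative formula by the chain rule. Write $A \defeq \pdv{F}{z}(z_0, \theta_0)$, which is invertible by hypothesis, and define the parameterized map
\[
    T_\theta(z) \defeq z - A^{-1} F(z, \theta).
\]
A point $z$ solves $F(z,\theta)=0$ if and only if it is a fixed point of $T_\theta$, so the task reduces to locating fixed points of $T_\theta$ for $\theta$ near $\theta_0$.

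First I would show $T_\theta$ is a contraction on a small closed ball about $z_0$. Differentiating gives $\pdv{T_\theta}{z}(z,\theta) = I - A^{-1}\pdv{F}{z}(z,\theta)$, which vanishes at $(z_0,\theta_0)$. Since $F$ is continuously differentiable, $\pdv{F}{z}$ is continuous, so there is a radius $r>0$ and a neighborhood of $\theta_0$ on which $\norm{\pdv{T_\theta}{z}} \le \tfrac12$; by the mean value inequality $T_\theta$ is then Lipschitz with constant $\tfrac12$ on the (convex) ball $\overline{B}(z_0,r)$. Next, because $F(z_0,\theta_0)=0$ and $F$ is continuous, shrinking the $\theta$-neighborhood makes $\norm{T_\theta(z_0)-z_0}=\norm{A^{-1}F(z_0,\theta)}\le r/2$, which together with the contraction estimate forces $T_\theta$ to map $\overline{B}(z_0,r)$ into itself. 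The Banach fixed-point theorem then yields a unique fixed point $z^*(\theta)$ in the ball, defining the implicit function with $z^*(\theta_0)=z_0$; its continuity follows from the uniform Lipschitz dependence of $T_\theta$ on $\theta$.

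It remains to establish differentiability and derive the formula. By continuity of $\pdv{F}{z}$ and invertibility of $A$, the matrix $\pdv{F}{z}(z^*(\theta),\theta)$ stays invertible throughout a neighborhood of $\theta_0$. Granting differentiability of $z^*$, I would differentiate the identity $F(z^*(\theta),\theta)\equiv 0$ by the chain rule to obtain
\[
    \pdv{F}{z}(z^*(\theta),\theta)\,\pdv{z^*}{\theta}(\theta) + \pdv{F}{\theta}(z^*(\theta),\theta) = 0,
\]
and then left-multiply by the inverse of $\pdv{F}{z}(z^*(\theta),\theta)$ to recover the claimed expression for $\pdv{z^*}{\theta}$.

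The main obstacle is justifying that $z^*$ is genuinely differentiable rather than merely continuous, since the chain-rule step presupposes it. I would close this gap by taking the formula above as a \emph{candidate} derivative and verifying the first-order Taylor estimate directly: using differentiability of $F$ at $(z^*(\theta),\theta)$ together with the already-established continuity of $z^*$, one shows that $z^*(\theta+\delta) - z^*(\theta)$ equals the candidate linear term plus a remainder that is $o(\norm{\delta})$. An alternative that sidesteps this bookkeeping is to apply the inverse function theorem to $\Phi(z,\theta)\defeq(F(z,\theta),\theta)$, whose Jacobian is block-triangular with invertible diagonal blocks $\pdv{F}{z}$ and $I$, hence invertible; the $C^1$ local inverse of $\Phi$ then supplies both the implicit map and its smoothness at once, after which the derivative formula again follows from the chain rule.
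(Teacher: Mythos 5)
Your proof is correct, but there is nothing in the paper to compare it against: the paper's ``proof'' of this theorem consists solely of a pointer to \citet{krantz2002implicit} and \citet{de2012implicit}, so the authors do not supply any argument of their own. Your contraction-mapping construction --- locating the zero of $F(\cdot,\theta)$ as the unique fixed point of $T_\theta(z) = z - A^{-1}F(z,\theta)$ on a small closed ball, and then upgrading continuity of $z^*$ to differentiability either by verifying the first-order remainder estimate for the candidate derivative or by applying the inverse function theorem to $\Phi(z,\theta)=(F(z,\theta),\theta)$ --- is precisely the standard proof given in the references the paper cites, and you correctly flag and close the one genuine subtlety, namely that differentiating the identity $F(z^*(\theta),\theta)\equiv 0$ by the chain rule presupposes that $z^*$ is differentiable. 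The only step you leave compressed is the continuity of $z^*$: the clean way to finish it is the estimate $\norm{z^*(\theta)-z^*(\theta')}\leq\norm{T_\theta(z^*(\theta))-T_{\theta'}(z^*(\theta))}+\tfrac{1}{2}\norm{z^*(\theta)-z^*(\theta')}$, which, after absorbing the second term into the left-hand side and using that $F$ is $C^1$ (hence locally Lipschitz in $\theta$), gives local Lipschitz continuity of $z^*$; this is routine and does not affect the validity of your argument.
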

\begin{proof}
See \citet{krantz2002implicit} or \citet{de2012implicit}.
\end{proof}

\begin{remark}
The condition $F(z_0, \theta_0) = 0$ may seem restrictive, but the power of \cref{thm:ift} is revealed when we examine the first-order optimality conditions for the minimization problem \cref{eq:min_prob} --- a solution $z^*$ to \cref{eq:min_prob} must satisfy $\nabla_z \varphi(z^*, \theta) = 0$. Therefore, the gradient of the function $\varphi$ satisfies the conditions of \cref{thm:ift} as long as the matrix of second-order partial derivatives of $\varphi$ with respect to $z$, namely the Hessian $\varphi_{zz} \defeq \pdv{}{z} \nabla_z \varphi$ is invertible. Furthermore, this generalizes naturally to constrained optimization problems (e.g., $z \in \mathcal{C} \subset \mathbb{R}^{n_z}$, for some constraint set $\mathcal{C}$) by considering the appropriate first-order optimality conditions of the problem (e.g., the \ac{KKT} conditions). Further discussion can be found in recent works such as \cite{blondel2022efficient}.
\end{remark}

The \ac{IFT} will be applied in the following subsection to derive a general \ac{DOC} framework.
This framework will be used to develop a robust MPC algorithm in \cref{sec:dt_mpc} that is made adaptive through online differentiable optimization.

\subsection{General Learning Framework as Bilevel Optimization}

We start by motivating the learning problem through the lens of optimal control. For clarity of presentation, we focus on the unconstrained case and provide discussion on the control-constrained case in \cref{appendix:control_constraints}. We consider the following general \emph{parameterized} optimal control problem of the form
\begin{problem}[Parameterized OC] \label{prob:parameterized_oc}
\begin{align*}
\boldsymbol{\tau}^*(\theta) = \argmin_{\boldsymbol{\tau}} {}& J(\boldsymbol{\tau}, \theta) \defeq \sum_{k = 0}^{N - 1} \ell(x_k, u_k, \theta) + \phi(x_N, \theta), \\
\text{subject to} \ \ & x_{k + 1} = f(x_k, u_k, \theta), \quad x_0 = \xi(\theta),
\end{align*}
where $\xi: \mathbb{R}^{n_\theta} \to \mathbb{R}^{n_x}$ is a differentiable function denoting the initial condition of the problem (e.g., $\bar{x}_t$ for the nominal MPC and $x_t$ for the ancillary MPC), $f: \mathbb{R}^{n_x} \times \mathbb{R}^{n_u} \times \mathbb{R}^{n_\theta} \to \mathbb{R}^{n_x}$ is the parameterized dynamics, and $\ell: \mathbb{R}^{n_x} \times \mathbb{R}^{n_u} \times \mathbb{R}^{n_\theta} \to \mathbb{R}$ and $\phi: \mathbb{R}^{n_x} \times \mathbb{R}^{n_\theta} \to \mathbb{R}$ are the parameterized running and terminal cost functions, respectively.
\end{problem}

The solution $\boldsymbol{\tau}^*$ depends on the problem parameters $\theta$, which represent the parts of the dynamics and the objective that are learnable or adaptable, such as cost function weights or unknown constants of a parametric, physics-based model. In practice, these parameters $\theta$ are hand-tuned by a domain expert and fixed during task execution. However, we propose an alternative methodology enabled through differentiable optimization that allows the parameters to be learned and adapted online through minimization of an appropriately specified loss function describing the desired task behavior. The learning objective is therefore defined as the following bilevel optimization over the parameters of \cref{prob:parameterized_oc}:
\begin{problem}[Learning Problem] \label{prob:learning_prob}
\begin{align}
    \min_\theta L(\boldsymbol{\tau}^*(\theta)) , \label{eq:learning_obj}
\end{align}
where $L$ is a differentiable loss function, such as the imitation loss $L = \norm{\boldsymbol{\tau}^*(\theta) - \boldsymbol{\tau}_\text{expert}}^2$ where $\boldsymbol{\tau}_\text{expert}$ is generated by a task expert, e.g., through human demonstration. Without loss of generality, we will assume $L$ does not depend on $\theta$ directly, but the following results can be easily extended to such cases.
\end{problem}

The goal will be to establish an efficient methodology to learn the optimal parameters $\theta^*$ by solving \cref{prob:learning_prob}. A natural choice to learn these parameters is through gradient descent. Using the chain rule, the gradient of the objective \cref{eq:learning_obj} with respect to $\theta$ is given as
\begin{align*}
    \nabla_{\theta} L(\boldsymbol{\tau}^*(\theta)) = \left(\pdv{\boldsymbol{\tau}^*(\theta)}{\theta}\right)^\top \nabla_{\boldsymbol{\tau}} L(\boldsymbol{\tau}^*(\theta)) .
\end{align*}
This quantity is often referred to as a \emph{hypergradient} to distinguish it from the gradients of the lower-level problem.
The term $\nabla_{\boldsymbol{\tau}} L$ can be calculated straightforwardly as it is often a simple analytic expression, e.g., for the imitation learning loss defined earlier, the gradient is given simply as $\nabla_{\boldsymbol{\tau}} L = 2 (\boldsymbol{\tau} - \boldsymbol{\tau}_\text{expert})$. The difficulty arises in calculating the Jacobian $\pdv{\boldsymbol{\tau}^*}{\theta}$ efficiently --- it is not obvious at first how to take derivatives of a \emph{solution} to \cref{prob:parameterized_oc}.

A na\"ive approach to computing this Jacobian would be to ``unroll'' the optimization algorithm itself, in a process similar to \ac{AD}. Indeed, this approach is used to great effect in recent work~\citep{okada2017path,bhardwaj2020differentiable}. However, the computational efficiency scales linearly with the number of iterations $K$ necessary to solve \cref{prob:parameterized_oc}, making it expensive for highly nonlinear, non-convex problems that might require many iterations to find a good solution. This motivates the use of implicit differentiation and the IFT as introduced in \cref{subsec:diff_opt}, which enables us to derive an analytic expression for the Jacobian $\pdv{\boldsymbol{\tau}^*}{\theta}$ without requiring full unrolling of the lower-level optimizer.

We begin by introducing the optimality conditions for \cref{prob:parameterized_oc}. This is a well-known result in optimal control known as Pontryagin's maximum principle \citep{pontryagin2018mathematical}.
\begin{proposition}[Optimality conditions of \cref{prob:parameterized_oc}]\label{proposition:optimality_conds}
Define the real-valued function $\mathcal{L}$ for \cref{prob:parameterized_oc}, called the \emph{Lagrangian}, as
\begin{align*}
\begin{aligned}
    \mathcal{L}(\boldsymbol{z}, \theta) = {}& \sum_{k = 0}^{N - 1} \ell(x_k, u_k, \theta) + \lambda_{k + 1}^\top (f(x_k, u_k, \theta) - x_{k + 1}) \\
    & + \lambda_0^\top (\xi(\theta) - x_0) + \phi(x_N, \theta),
\end{aligned}
\end{align*}
where $\lambda_k \in \mathbb{R}^{n_x}$, $k = 0, 1, \ldots, N$ are the Lagrange multipliers for the dynamics and initial condition constraints, and $\boldsymbol{z} \defeq (\boldsymbol{\tau}, \boldsymbol{\lambda}) = (\lambda_0, x_0, u_0, \ldots, \lambda_N, x_N)$ for notational compactness.

Let $\boldsymbol{\tau}^*$ be a solution to \cref{prob:parameterized_oc} for fixed parameters~$\theta$. Then, there exists Lagrange multipliers~$\boldsymbol{\lambda}^*$ which together with $\boldsymbol{\tau}^*$ satisfy $\nabla_{\boldsymbol{z}} \mathcal{L}(\boldsymbol{z}^*, \theta) = 0$.
\end{proposition}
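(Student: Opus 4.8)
The plan is to recognize \cref{prob:parameterized_oc} as a smooth equality-constrained minimization over the finite-dimensional variable $\boldsymbol{\tau} = (x_0, u_0, \ldots, x_N)$ and to invoke the standard Lagrange multiplier (first-order KKT) theorem for equality constraints. The constraints are the $N$ dynamics relations $c_k(\boldsymbol{\tau}, \theta) \defeq f(x_k, u_k, \theta) - x_{k+1} = 0$, $k = 0, \ldots, N-1$, together with the initial-condition constraint $c_{\mathrm{ic}}(\boldsymbol{\tau}, \theta) \defeq \xi(\theta) - x_0 = 0$. Stacking these into a single vector map $c(\boldsymbol{\tau}, \theta) = 0$, the function $\mathcal{L}$ in the statement is exactly the associated Lagrangian, with $\lambda_{k+1}$ multiplying the $k$-th dynamics constraint and $\lambda_0$ multiplying the initial-condition constraint. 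Once a constraint qualification is established, the multiplier theorem yields $\boldsymbol{\lambda}^*$ with $\nabla_{\boldsymbol{\tau}} \mathcal{L}(\boldsymbol{z}^*, \theta) = 0$. Combining this with the observation that $\nabla_{\boldsymbol{\lambda}} \mathcal{L}(\boldsymbol{z}^*, \theta) = 0$ merely restates primal feasibility --- which $\boldsymbol{\tau}^*$ satisfies by hypothesis --- gives the full conclusion $\nabla_{\boldsymbol{z}} \mathcal{L}(\boldsymbol{z}^*, \theta) = 0$.

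The step I expect to require the most care is verifying the constraint qualification, since the multiplier theorem applies only when the active constraints are regular at $\boldsymbol{\tau}^*$. Fortunately the structure of the problem makes this immediate and point-independent. Ordering the constraints as $c_{\mathrm{ic}}, c_0, \ldots, c_{N-1}$ and the variables as $x_0, u_0, \ldots, x_N$, the Jacobian $\partial c / \partial \boldsymbol{\tau}$ is block lower triangular: $c_{\mathrm{ic}}$ contributes $-I$ in the $x_0$ column, and each $c_k$ contributes $-I$ in the $x_{k+1}$ column while depending otherwise only on the earlier variables $(x_k, u_k)$. Choosing the pivots $x_0, x_1, \ldots, x_N$ therefore exhibits an invertible ($\pm I$) block on each diagonal position, so the constraint Jacobian has full row rank at every point and LICQ holds unconditionally, with no further assumption on $f$, $\ell$, or $\phi$ beyond the stated differentiability. (Equivalently, one could eliminate the states by forward simulation to obtain an unconstrained problem in the controls and reconstruct the multipliers from the stationarity conditions; the triangular-Jacobian argument is the cleaner route and is precisely what makes the adjoint system well-posed.)

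Finally, I would sketch the component-wise unpacking of $\nabla_{\boldsymbol{z}} \mathcal{L} = 0$ to confirm that the chosen Lagrangian reproduces the familiar Pontryagin/adjoint relations and to make the result directly usable in the subsequent implicit-differentiation argument. Differentiating in the multipliers returns the dynamics and the initial condition; differentiating in $u_k$ gives the control stationarity $\nabla_{u_k} \ell + (\partial f / \partial u_k)^\top \lambda_{k+1} = 0$; differentiating in the interior states $x_k$ gives the backward costate recursion $\lambda_k = \nabla_{x_k} \ell + (\partial f / \partial x_k)^\top \lambda_{k+1}$; and differentiating in $x_N$ gives the terminal (transversality) condition $\lambda_N = \nabla_{x_N} \phi$. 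Since the proposition only asserts the existence of multipliers with $\nabla_{\boldsymbol{z}} \mathcal{L} = 0$, this explicit expansion is not logically required, but it verifies that the stated Lagrangian is the correct object and prepares the stationarity system whose invertible Hessian will drive the \ac{IFT} computation in the following subsection.
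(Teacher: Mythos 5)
Your proposal is correct and follows essentially the same route as the paper: both identify the claimed conditions as the first-order KKT system of the equality-constrained nonlinear program underlying \cref{prob:parameterized_oc}, and both unpack $\nabla_{\boldsymbol{z}}\mathcal{L}=0$ into primal feasibility, the backward costate recursion $\lambda_k = \ell_{x_k} + f_{x_k}^\top \lambda_{k+1}$ with $\lambda_N = \phi_x$, and the control stationarity $0 = \ell_{u_k} + f_{u_k}^\top \lambda_{k+1}$. The one substantive difference is that you justify the existence of the multipliers by verifying LICQ through the block-triangular structure of the constraint Jacobian (each constraint carrying a $-I$ block in its ``own'' state column), whereas the paper simply writes down the backward recursion defining $\boldsymbol{\lambda}^*$ and asserts the stationarity conditions without addressing a constraint qualification; your version makes explicit why the multiplier theorem applies unconditionally, which is a small but genuine tightening of the argument the paper leaves implicit.
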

\begin{proof}
The conditions $\nabla_{\boldsymbol{z}} \mathcal{L}(\boldsymbol{z}^*, \theta) = 0$ are the \ac{KKT} conditions for \cref{prob:parameterized_oc}. See \cref{appendix:optimality_conditions}.
\end{proof}

For reasons that will be clear soon, it is advantageous to use \ac{DDP}~\citep{mayne1966second,jacobson1970differential} as the optimization algorithm to solve \cref{prob:parameterized_oc}. \ac{DDP} uses the principle of dynamic programming~\citep{bellman1966dynamic} to solve \cref{prob:parameterized_oc} efficiently by reparameterizing the minimization over all possible control trajectories as a \emph{sequence} of minimizations proceeding backwards-in-time.

More formally, \ac{DDP} iteratively solves for and applies the Newton step $\odif{\boldsymbol{z}}$ until convergence:
\begin{align}
    \mathcal{L}_{\boldsymbol{z}\boldsymbol{z}} \odif{\boldsymbol{z}} = - \nabla_{\boldsymbol{z}} \mathcal{L}, \label{eq:ddp_newton_step}
\end{align}
which requires inverting the Hessian $\mathcal{L}_{\boldsymbol{z}\boldsymbol{z}}$.
However, na\"ively inverting this matrix is prohibitively expensive, since the size of the matrix is quadratic in the time horizon of the problem, i.e., the matrix inversion of $\mathcal{L}_{\boldsymbol{z}\boldsymbol{z}}$ is $O(N^3)$. To remedy this, we utilize dynamic programming and the sparsity induced by the constraints of the control problem to rewrite \cref{eq:ddp_newton_step} as a series of backwards difference equations --- in optimal control, these are known as the \emph{Riccati equations}. This allows the solution of \cref{eq:ddp_newton_step} to be computed with efficiency linear in the horizon~$N$. This same insight will be key to efficiently computing the implicit derivative of \cref{prob:parameterized_oc} defined below.

\begin{proposition}[Implicit derivative of \cref{prob:parameterized_oc}]\label{proposition:implicit_derivative}
Let $\boldsymbol{\tau}^*$ be a solution to \cref{prob:parameterized_oc} for fixed parameters~$\theta$. By \cref{proposition:optimality_conds}, \cref{thm:ift} holds with $F = \nabla_{\boldsymbol{z}} \mathcal{L}$. Furthermore, the Jacobian $\pdv{\boldsymbol{z}^*}{\theta}$ is given as
\begin{align}
\pdv{}{\theta}\boldsymbol{z}^*(\theta) = - \mathcal{L}_{\boldsymbol{z}\boldsymbol{z}}^{-1} \mathcal{L}_{\boldsymbol{z}\theta} . \label{eq:implicit_derivative}
\end{align}
\end{proposition}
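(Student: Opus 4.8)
The plan is to apply \cref{thm:ift} verbatim to the function $F(\boldsymbol{z}, \theta) \defeq \nabla_{\boldsymbol{z}} \mathcal{L}(\boldsymbol{z}, \theta)$, so the argument reduces to checking the three hypotheses of the IFT and then reading off its conclusion. First I would verify that $F$ is continuously differentiable: because the running cost $\ell$, the terminal cost $\phi$, the dynamics $f$, and the initial-condition map $\xi$ are assumed twice continuously differentiable in their arguments, the Lagrangian $\mathcal{L}$ is $C^2$ jointly in $(\boldsymbol{z}, \theta)$, whence its gradient $F = \nabla_{\boldsymbol{z}} \mathcal{L}$ is $C^1$. Second, the root condition $F(\boldsymbol{z}^*, \theta) = 0$ required by \cref{thm:ift} is exactly the content of \cref{proposition:optimality_conds}: since $\boldsymbol{\tau}^*$ solves \cref{prob:parameterized_oc}, there exist multipliers $\boldsymbol{\lambda}^*$ so that $\boldsymbol{z}^* = (\boldsymbol{\tau}^*, \boldsymbol{\lambda}^*)$ satisfies $\nabla_{\boldsymbol{z}} \mathcal{L}(\boldsymbol{z}^*, \theta) = 0$.

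With the root established, the stated formula follows by identifying the two Jacobian blocks in the IFT conclusion with second derivatives of $\mathcal{L}$. The Jacobian of $F$ in the solution variable is $\pdv{}{\boldsymbol{z}} F = \pdv{}{\boldsymbol{z}} \nabla_{\boldsymbol{z}} \mathcal{L} = \mathcal{L}_{\boldsymbol{z}\boldsymbol{z}}$, the Lagrangian Hessian, while the Jacobian in the parameter is $\pdv{}{\theta} F = \pdv{}{\theta} \nabla_{\boldsymbol{z}} \mathcal{L} = \mathcal{L}_{\boldsymbol{z}\theta}$, the mixed second derivative. Substituting both identities into the IFT expression $\pdv{}{\theta} \boldsymbol{z}^*(\theta) = -\bigl(\pdv{}{\boldsymbol{z}} F\bigr)^{-1} \pdv{}{\theta} F$, evaluated at $(\boldsymbol{z}^*, \theta)$, produces exactly \cref{eq:implicit_derivative}.

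The single hypothesis that is not immediate, and the crux of the argument, is the invertibility of $\mathcal{L}_{\boldsymbol{z}\boldsymbol{z}}$ at $(\boldsymbol{z}^*, \theta)$. This is delicate because $\boldsymbol{z}$ stacks both the primal trajectory $\boldsymbol{\tau}$ and the dual multipliers $\boldsymbol{\lambda}$, so $\mathcal{L}_{\boldsymbol{z}\boldsymbol{z}}$ is not a definite matrix but a saddle-point (KKT) matrix; since $\mathcal{L}$ is affine in $\boldsymbol{\lambda}$, its dual-dual block vanishes, giving the block form $\mathcal{L}_{\boldsymbol{z}\boldsymbol{z}} = \begin{bmatrix} \mathcal{L}_{\boldsymbol{\tau}\boldsymbol{\tau}} & \mathcal{L}_{\boldsymbol{\tau}\boldsymbol{\lambda}} \\ \mathcal{L}_{\boldsymbol{\lambda}\boldsymbol{\tau}} & 0 \end{bmatrix}$ (up to the interleaving of variables). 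I would establish invertibility through the standard second-order sufficient conditions for \cref{prob:parameterized_oc}: the constraint Jacobian $\mathcal{L}_{\boldsymbol{\lambda}\boldsymbol{\tau}}$ has full row rank — this holds structurally, since the discrete-time dynamics and initial-condition constraints contribute identity blocks on the $x_{k+1}$ and $x_0$ entries, making the Jacobian surjective — and the primal block $\mathcal{L}_{\boldsymbol{\tau}\boldsymbol{\tau}}$ is positive definite on the null space of that Jacobian. Rather than checking this on the full $O(N)$-dimensional matrix directly, I would exploit the temporal sparsity exactly as in the \ac{DDP} discussion preceding the proposition: the backward Riccati recursion that solves the Newton system \cref{eq:ddp_newton_step} amounts to a block-triangular factorization of $\mathcal{L}_{\boldsymbol{z}\boldsymbol{z}}$ whose well-posedness reduces to the positive definiteness of the stagewise control Hessians. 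The embedded-DBaS construction of \cref{subsec: dbas} is precisely what guarantees these stage Hessians are positive definite, so the factorization exists and $\mathcal{L}_{\boldsymbol{z}\boldsymbol{z}}$ is invertible. I expect this invertibility step — linking the abstract IFT regularity condition to the concrete second-order/Riccati conditions of the control problem — to be the main obstacle, while the smoothness verification and the block substitution are routine.
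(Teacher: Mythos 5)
Your proof follows the same route as the paper's: verify the root condition $F(\boldsymbol{z}^*,\theta)=0$ via \cref{proposition:optimality_conds}, identify $\pdv{}{\boldsymbol{z}}F = \mathcal{L}_{\boldsymbol{z}\boldsymbol{z}}$ and $\pdv{}{\theta}F = \mathcal{L}_{\boldsymbol{z}\theta}$, and read off the conclusion of \cref{thm:ift}. The one place you diverge is the invertibility hypothesis, which you correctly single out as the crux. The paper does not attempt to prove it: it simply \emph{assumes} $\mathcal{L}_{\boldsymbol{z}\boldsymbol{z}}$ is nonsingular at $\boldsymbol{z}^*$, remarking that this can be enforced in practice by Levenberg--Marquardt regularization. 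You instead try to derive it from second-order sufficient conditions --- full row rank of the constraint Jacobian (which does hold structurally, thanks to the $-I$ blocks on $x_0$ and $x_{k+1}$) plus positive definiteness of the reduced Hessian, reduced via the Riccati factorization to positive definiteness of the stagewise $Q_{uu}^{(k)}$. That reduction is sound and is arguably a stronger, more informative argument than the paper's. However, your final step over-reaches: \cref{prob:parameterized_oc} is a general parameterized OC problem that does not presuppose barrier states, and the DBaS positive-definiteness guarantee you invoke (Theorems 2--3 of the cited work) applies to the Gauss-Newton/iLQR value Hessians, not to the exact $Q_{uu}^{(k)} = \mathcal{L}_{uu}^{(k)} + f_{u}^\top V_{xx}f_{u}$ appearing here, whose second-order dynamics contributions $\lambda_{k+1}\otimes f_{uu}$ can destroy definiteness. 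So either state positive definiteness of the stage Hessians as an assumption (which recovers the paper's position, made precise) or note that regularization enforces it; as written, the claim that DBaS ``is precisely what guarantees'' invertibility in this general setting is not justified.
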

\begin{proof}
See \cref{appendix:implicit_derivative}.
\end{proof}

\begin{remark}
Note that the structure of both \cref{eq:ddp_newton_step} and \cref{eq:implicit_derivative} here is similar, with the only difference being the fact that the inverse Hessian is applied to the gradient $\nabla_{\boldsymbol{z}} \mathcal{L}$ in \cref{eq:ddp_newton_step} while it is multiplied with a matrix of partial derivatives $\mathcal{L}_{\boldsymbol{z}\theta}$ in \cref{eq:implicit_derivative}. In fact, solving \cref{eq:implicit_derivative} directly allows one to derive the \ac{PDP} framework proposed by \citet{jin2020pontryagin}, which was originally derived by directly differentiating the KKT conditions of \cref{prob:parameterized_oc} with respect to $\theta$.
\end{remark}
\begin{corollary}[Pontryagin differentiable programming~\citep{jin2020pontryagin}]\label{corollary:pdp}
Let the conditions of \cref{proposition:optimality_conds} and \cref{proposition:implicit_derivative} hold. Then, the differentiable Pontryagin conditions ((Eq. (13) of \citet{jin2020pontryagin}) are equivalent to solving the linear system \cref{eq:implicit_derivative}.
\end{corollary}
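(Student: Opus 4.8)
The plan is to show that the linear system \cref{eq:implicit_derivative} is exactly the total derivative of the discrete Pontryagin (KKT) conditions with respect to $\theta$, and that writing this single matrix equation block-by-block reproduces the auxiliary system of \citet{jin2020pontryagin} term-for-term. First I would observe that, by \cref{proposition:implicit_derivative}, the \ac{IFT} applies with $F = \nabla_{\boldsymbol{z}} \mathcal{L}$, so the optimality condition $\nabla_{\boldsymbol{z}} \mathcal{L}(\boldsymbol{z}^*(\theta), \theta) = 0$ from \cref{proposition:optimality_conds} holds identically for all $\theta$ in a neighborhood of the fixed parameter. Differentiating this identity with respect to $\theta$ through the chain rule yields $\mathcal{L}_{\boldsymbol{z}\boldsymbol{z}} \pdv{\boldsymbol{z}^*}{\theta} + \mathcal{L}_{\boldsymbol{z}\theta} = 0$, which is precisely \cref{eq:implicit_derivative} after left-multiplying by $-\mathcal{L}_{\boldsymbol{z}\boldsymbol{z}}^{-1}$. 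This already identifies the implicit-derivative system with the act of differentiating the Pontryagin conditions through $\theta$, which is exactly the construction underlying PDP.

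Next I would expand $\mathcal{L}_{\boldsymbol{z}\boldsymbol{z}} \pdv{\boldsymbol{z}^*}{\theta} = -\mathcal{L}_{\boldsymbol{z}\theta}$ into the blocks dictated by the partition $\boldsymbol{z} = (\lambda_0, x_0, u_0, \ldots, \lambda_N, x_N)$. The condition $\nabla_{\boldsymbol{z}} \mathcal{L} = 0$ itself decomposes into three families: the adjoint recursion from $\nabla_{x_k} \mathcal{L} = 0$, the control-stationarity equation from $\nabla_{u_k} \mathcal{L} = 0$, and the dynamics and initial-condition constraints from $\nabla_{\lambda_k} \mathcal{L} = 0$. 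Differentiating each family with respect to $\theta$ produces, respectively, a linearized backward costate recursion for $\pdv{\lambda_k}{\theta}$ driven by $\ell_{xx}, \ell_{xu}, f_x$ and the mixed partials $\ell_{x\theta}, \phi_{x\theta}$; an algebraic relation tying $\pdv{u_k}{\theta}$ to the state and costate derivatives through $\ell_{uu}, \ell_{ux}, f_u$ and $\ell_{u\theta}$; and a linearized forward recursion for $\pdv{x_k}{\theta}, \pdv{u_k}{\theta}$ driven by $f_x, f_u, f_\theta$ with boundary term $\xi_\theta$. I would then match these three line-by-line against Eq. (13) of \citet{jin2020pontryagin}. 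Because $\mathcal{L}_{\boldsymbol{z}\boldsymbol{z}}$ is invertible under the hypotheses of \cref{proposition:implicit_derivative}, the expanded system and the compact form \cref{eq:implicit_derivative} possess the same unique solution $\pdv{\boldsymbol{z}^*}{\theta}$, so solving one is equivalent to solving the other.

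The main obstacle is index and sign bookkeeping rather than any conceptual difficulty. One must fix the time-shift convention between the multiplier $\lambda_{k+1}$ attached to the constraint $f(x_k, u_k, \theta) - x_{k+1}$ in $\mathcal{L}$ and the adjoint variable used by PDP, and verify that the endpoint blocks --- the initial-condition term $\xi_\theta$ and the terminal-cost terms $\phi_{xx}, \phi_{x\theta}$ --- align with the boundary conditions of the auxiliary system. Once this dictionary between the Hessian blocks of $\mathcal{L}$ and the coefficient matrices of \citet{jin2020pontryagin} is fixed, the equivalence follows by direct comparison, and the $O(N)$ Riccati-style solution developed for \cref{eq:ddp_newton_step} transfers verbatim to the expanded system, recovering PDP's computational scheme as a special case of the present framework.
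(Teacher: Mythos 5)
Your proposal is correct and follows essentially the same route as the paper: the paper's proof likewise writes out the block structure of $\mathcal{L}_{\boldsymbol{z}\boldsymbol{z}}$ and $\mathcal{L}_{\boldsymbol{z}\theta}$ induced by the partition $\boldsymbol{z} = (\lambda_0, x_0, u_0, \ldots, \lambda_N, x_N)$ and expands the matrix product $\mathcal{L}_{\boldsymbol{z}\boldsymbol{z}} \pdv{\boldsymbol{z}^*}{\theta} = -\mathcal{L}_{\boldsymbol{z}\theta}$ row-by-row to recover the differentiable Pontryagin conditions term-for-term. Your framing of the system as the total $\theta$-derivative of the stationarity identity $\nabla_{\boldsymbol{z}}\mathcal{L}(\boldsymbol{z}^*(\theta),\theta)=0$ is just the same computation viewed from the other end, and the bookkeeping you flag (the $\lambda_{k+1}$ time-shift and the $\xi_\theta$, $\phi_{xx}$, $\phi_{x\theta}$ boundary blocks) is exactly what the paper's explicit block matrices resolve.
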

\begin{proof}
This can be seen by expanding the matrix multiplication in \cref{eq:implicit_derivative}. See \cref{appendix:pdp}.
\end{proof}
\cref{corollary:pdp} implies that rather than differentiating the Pontryagin conditions directly, a more general framework can be derived by applying the IFT to the control problem. Furthermore, the IFT describes the conditions under which this derivative exists and can be calculated, which generalizes the work by \citet{jin2020pontryagin}.

However, this process requires solving the set of matrix equations in \cref{eq:implicit_derivative}, i.e., a matrix control system backwards-in-time. This is inefficient due to requiring the computation and storage of the intermediate Jacobians $\pdv{x_k^*}{\theta}$ and $\pdv{u_k^*}{\theta}$ along the entire trajectory. We will show next that an improvement can be made by taking advantage of an insight similar to the computation of \acp{VJP} in \ac{AD}.

\begin{theorem}[Differentiable Optimal Control] \label{thm:doc}
Let $\boldsymbol{z}$ denote the augmented vector consisting of $\boldsymbol{\tau}$ and $\boldsymbol{\lambda}$.
In addition, let the conditions of \cref{proposition:optimality_conds} and \cref{proposition:implicit_derivative} hold. Then, the gradient of the loss $L$ with respect to $\theta$ is given by
\begin{align}
    \nabla_\theta L(\boldsymbol{z}^*(\theta)) = \mathcal{L}_{\theta \boldsymbol{z}} \delta \boldsymbol{z} , \label{eq:L_grad_doc}
\end{align}
where the vector $\delta \boldsymbol{z}$ is given by solving the linear system
\begin{align}
    \mathcal{L}_{\boldsymbol{z}\boldsymbol{z}} \delta \boldsymbol{z} = -\nabla_{\boldsymbol{z}} L . \label{eq:d_z}
\end{align}
\end{theorem}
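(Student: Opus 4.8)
The plan is to start from the chain-rule expression for the hypergradient already recorded just before the statement, namely $\nabla_\theta L = \left(\pdv{\boldsymbol{z}^*}{\theta}\right)^\top \nabla_{\boldsymbol{z}} L$, substitute the closed form for the Jacobian supplied by \cref{proposition:implicit_derivative}, and then exploit the symmetry of $\mathcal{L}_{\boldsymbol{z}\boldsymbol{z}}$ to regroup the matrix products so that the inverse Hessian is applied once to the vector $\nabla_{\boldsymbol{z}} L$ rather than to the full matrix $\mathcal{L}_{\boldsymbol{z}\theta}$. This is precisely the adjoint / vector-Jacobian-product reordering alluded to in the discussion preceding the theorem.

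First I would invoke \cref{proposition:implicit_derivative} to write $\pdv{\boldsymbol{z}^*}{\theta} = -\mathcal{L}_{\boldsymbol{z}\boldsymbol{z}}^{-1}\mathcal{L}_{\boldsymbol{z}\theta}$, which is legitimate because the hypotheses of \cref{thm:doc} carry over the invertibility of $\mathcal{L}_{\boldsymbol{z}\boldsymbol{z}}$ from the \ac{IFT}. Taking the transpose and applying it to $\nabla_{\boldsymbol{z}} L$ gives $\nabla_\theta L = -\mathcal{L}_{\boldsymbol{z}\theta}^\top \mathcal{L}_{\boldsymbol{z}\boldsymbol{z}}^{-\top}\nabla_{\boldsymbol{z}}L$. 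Two elementary facts then close the algebra: $\mathcal{L}_{\boldsymbol{z}\boldsymbol{z}}$ is a Hessian and hence symmetric, so $\mathcal{L}_{\boldsymbol{z}\boldsymbol{z}}^{-\top}=\mathcal{L}_{\boldsymbol{z}\boldsymbol{z}}^{-1}$; and by equality of mixed partials $\mathcal{L}_{\boldsymbol{z}\theta}^\top = \mathcal{L}_{\theta\boldsymbol{z}}$. I would introduce the auxiliary vector $\delta\boldsymbol{z} \defeq -\mathcal{L}_{\boldsymbol{z}\boldsymbol{z}}^{-1}\nabla_{\boldsymbol{z}}L$, which is exactly the unique solution of the linear system \cref{eq:d_z}, again by invertibility of $\mathcal{L}_{\boldsymbol{z}\boldsymbol{z}}$. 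Substituting yields $\nabla_\theta L = \mathcal{L}_{\theta\boldsymbol{z}}\,\delta\boldsymbol{z}$, which is \cref{eq:L_grad_doc}, completing the derivation.

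The crux is the associativity/regrouping rather than any hard estimate: by solving $\mathcal{L}_{\boldsymbol{z}\boldsymbol{z}}\delta\boldsymbol{z} = -\nabla_{\boldsymbol{z}}L$ for a single vector, and only afterwards left-multiplying by $\mathcal{L}_{\theta\boldsymbol{z}}$, one never forms the full solution Jacobian $\pdv{\boldsymbol{z}^*}{\theta}$ appearing in \cref{eq:implicit_derivative}. Consequently, the main obstacle I anticipate is not the short chain of substitutions but the justification that this regrouped computation inherits the favorable $O(N)$ structure. I would emphasize that \cref{eq:d_z} shares exactly the same sparse, block-banded structure as the Newton/Riccati system \cref{eq:ddp_newton_step}, differing only in the right-hand side, so the same dynamic-programming backward sweep that renders \ac{DDP} linear in the horizon solves \cref{eq:d_z} in $O(N)$ time; accumulating $\mathcal{L}_{\theta\boldsymbol{z}}\,\delta\boldsymbol{z}$ term-by-term along that sweep then gives the hypergradient at $O(1)$ memory cost. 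I would also note that the symmetry of $\mathcal{L}_{\boldsymbol{z}\boldsymbol{z}}$ used above to replace $\mathcal{L}_{\boldsymbol{z}\boldsymbol{z}}^{-\top}$ by $\mathcal{L}_{\boldsymbol{z}\boldsymbol{z}}^{-1}$ is precisely what permits reusing the single factorization of $\mathcal{L}_{\boldsymbol{z}\boldsymbol{z}}$ for both the forward optimization solve and this adjoint solve, which is the practical payoff advertised in the introduction to \cref{sec:doc}.
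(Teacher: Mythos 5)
Your proposal is correct and follows essentially the same route as the paper's proof: apply the chain rule, substitute the implicit derivative from \cref{proposition:implicit_derivative}, transpose (using the symmetry of $\mathcal{L}_{\boldsymbol{z}\boldsymbol{z}}$ and equality of mixed partials, which the paper leaves implicit), and define $\delta\boldsymbol{z}$ as the solution of the adjoint system \cref{eq:d_z}. Your added remarks on the $O(N)$ Riccati structure are accurate but belong to the discussion following the theorem rather than to its proof.
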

\begin{proof}
See \cref{appendix:doc}.
\end{proof}
\begin{remark}
\Cref{thm:doc} shows that, rather than calculating $\mathcal{L}_{\boldsymbol{z}\boldsymbol{z}}^{-1} \mathcal{L}_{\boldsymbol{z} \theta}$ directly as in \cref{eq:implicit_derivative}, we can instead start by solving the linear system \cref{eq:d_z} for the vector~$\delta \boldsymbol{z}$. Then, the gradient $\nabla_\theta L$ can be computed through a simple matrix multiplication (\cref{eq:L_grad_doc}). This general algorithm is presented in \cref{alg:doc}, whose full derivation is provided in the \cref{appendix:doc_algorithm}.
\end{remark}

\begin{algorithm}
\caption{Differentiable Optimal Control (DOC)} \label{alg:doc}
\KwIn{Derivatives of $\mathcal{L}$ (equivalently $f$, $\ell$, $\phi$, and $\xi$) and $L$ along the solution $\boldsymbol{z}^*$}
\KwOut{Gradient of upper-level loss $\nabla_\theta L$}

$\widetilde{\boldsymbol{V}}_x, \boldsymbol{V}_{xx}, \widetilde{\boldsymbol{k}}, \boldsymbol{K} \gets$ Solve backward pass equations (\cref{alg:doc_bpass} of \cref{appendix:doc_algorithm}); \\
$\nabla_\theta L \gets$ Solve forward pass equations (\cref{alg:doc_fpass} of \cref{appendix:doc_algorithm});
\end{algorithm}

As established earlier and presented in \cref{alg:doc}, solving systems of the form \cref{eq:d_z} can be accomplished through the \ac{DDP} equations by replacing the gradient of the Lagrangian $\nabla_{\boldsymbol{z}} \mathcal{L}$ in \cref{eq:ddp_newton_step} with the gradient of the upper-level loss $\nabla_{\boldsymbol{z}} L$. This fact illustrates the connection between the lower-level control problem (\cref{prob:parameterized_oc}) and the upper-level learning problem (\cref{prob:learning_prob}) and highlights the advantages of implicit differentiation --- by taking advantage of the structure of the lower-level problem, an efficient algorithm can be derived for computing hypergradients of the upper-level problem. This algorithm is presented in \cref{alg:doc} and has $O(N)$ time complexity and $O(1)$ memory complexity, where $N$ is the look-ahead horizon of the MPC. Therefore, our approach scales similarly to conventional MPC, and furthermore, this connection shows that it is advantageous to use DDP as the lower-level control solver since the necessary derivatives for DDP (e.g., $\mathcal{L}_{\boldsymbol{z}\boldsymbol{z}}$) can be reused during the computation of the hypergradient in \cref{eq:L_grad_doc,eq:d_z}.

Remarkably, by deriving \cref{thm:doc} through the \ac{IFT}, the proposed \ac{DOC} methodology is independent of how a solution to \cref{prob:parameterized_oc} is generated. In other words, rather than begin from an existing optimal control algorithm, as is presented by, e.g., \citet{amos2018differentiable} in the context of \ac{iLQR} or~\citet{dinev2022differentiable} in the context of \ac{DDP}, we show that a single framework enables the differentiable optimization of any control algorithm, as long as the produced solution itself satisfies the optimality conditions of \cref{proposition:optimality_conds}.

An additional benefit of the independence from the optimal control solver is the fact that the gradient computation does not depend on the number of solver iterations $K$ required to reach a solution. While unrolling a numerical solver through \ac{AD} would incur computational complexity $O(KN)$ in both time and memory, our approach maintains $O(N)$ complexity independent of the underlying solver. These facts allow the gradient computation to be efficient enough for the adaptation of a real-time MPC controller. Further speedups can be incorporated by adopting a Gauss-Newton approximation when solving \cref{eq:d_z}, at the cost of numerical error. The particular choice of a Gauss-Newton approximation allows one to derive the seminal differentiable MPC (Diff-MPC) by \citet{amos2018differentiable}. This is formalized in the following corollary:
\begin{corollary}[Diff-MPC~\citep{amos2018differentiable}]
Diff-MPC is equivalent to using a Gauss-Newton approximation when solving \cref{eq:d_z}.
\end{corollary}
\begin{proof}
See \cref{appendix:diff_mpc}.
\end{proof}

\subsection{Numerical Precision Guarantees}
\label{subsec:precision}

Recent work by \citet{blondel2022efficient} has provided theoretical guarantees on the numerical precision of implicit differentiation-based approaches for a general class of problems.
In practice, the implicit derivative $\frac{\partial \boldsymbol{z}^*}{\partial \theta}$ is computed at some suboptimal point $\hat{\boldsymbol{z}}$ that approximates the optimal solution $\boldsymbol{z}^*$.
Therefore, it is helpful to understand both theoretically and empirically the error in the Jacobian approximation as it directly affects the quality of the final hypergradients.

Using \cref{proposition:implicit_derivative}, we can define the \emph{Jacobian estimate at $(\hat{\boldsymbol{z}}, \theta)$} as the function $J(\hat{\boldsymbol{z}}, \theta) \defeq - \mathcal{L}_{\boldsymbol{z}\boldsymbol{z}}^{-1}(\hat{\boldsymbol{z}}, \theta) \mathcal{L}_{\boldsymbol{z}\theta}(\hat{\boldsymbol{z}}, \theta) \approx \frac{\partial \boldsymbol{z}^*}{\partial \theta}$.
By assuming $\mathcal{L}_{\boldsymbol{z}\boldsymbol{z}}$ is well-conditioned and Lipschitz and $\mathcal{L}_{\boldsymbol{z}\theta}$ is bounded and Lipschitz, it can be shown that the Jacobian estimate error is on the same order as that of approximating $\boldsymbol{z}^*$ with $\hat{\boldsymbol{z}}$, namely $\norm{J(\hat{\boldsymbol{z}}, \theta) - \frac{\partial \boldsymbol{z}^*}{\partial \theta}} \leq C \norm{\hat{\boldsymbol{z}} - \boldsymbol{z}^*}$ (\cite[Theorem 1]{blondel2022efficient}).

These bounds are not very useful in the context of optimal control since we rarely construct the entire Hessian matrices of the Lagrangian $\mathcal{L}_{\boldsymbol{z}\boldsymbol{z}}$, etc. Furthermore, as shown previously, these matrices have sparse structure due to the time-varying nature of the problem, and thus the bounds for the general case are not very informative. Therefore, we specialize these numerical guarantees for general optimal control problems by showing that the stagewise Jacobian errors, denoted $\norm{ \frac{\partial \hat{x}_k}{\partial \theta} - \frac{\partial x_k^*}{\partial \theta}}$ and $\norm{ \frac{\partial \hat{u}_k}{\partial \theta} - \frac{\partial u_k^*}{\partial \theta}}$, grow linearly with respect to the errors in the solution approximation $\norm{\hat{x}_k - x_k^*}$ and $\norm{\hat{u}_k - u_k^*}$ in a recursive manner. Notably, due to the temporal structure of the control problem, the Jacobian errors at time $k$ only depend on the state and control errors up to and including time $k$. These bounds hold under typical assumptions on quantities related to the optimal control problem, such as the positive definiteness of the matrix $Q_{uu}$ (computed during the backward pass of the algorithm, line 1 of \cref{alg:doc}) and the local Lipschitzness of the dynamics Jacobians. To the authors' best knowledge, this is the first work to present theoretical guarantees on the numerical precision of IFT-based differentiable control algorithms. An abbreviated statement of the theorem is given below with the full version and its proof appearing in \cref{appendix:jac_error}.

\begin{theorem}[Jacobian estimate error]
\label{thm:jac_error}
Let the Jacobians of the dynamics $f_{x_k}, f_{u_k}, f_{\theta_k}$ and the Hessians of the state-action value function $Q_{uu}^{(k)}$, $Q_{ux}^{(k)}$, and $Q_{u\theta}^{(k)}$ be locally Lipschitz in $(x_k, u_k)$ and bounded in a neighborhood of the optimal trajectory $\boldsymbol{z}^*$. Furthermore, let $Q_{uu}^{(k)}$ be positive definite in a neighborhood of $(x_k^*, u_k^*)$ for all $k = 0, 1, \ldots, N - 1$. Then, the error in using \cref{proposition:implicit_derivative} to compute the implicit derivatives of the control problem is upper bounded by
\begin{align*}
    \norm{ \frac{\partial \hat{u}_k}{\partial \theta} - \frac{\partial u_k^*}{\partial \theta}} &\leq \sum_{t = 0}^{k} C_{k,t} (\norm{\hat{x}_t - x_t^*} + \norm{\hat{u}_t - u_t^*}),\\
    \norm{ \frac{\partial \hat{x}_{k + 1}}{\partial \theta} - \frac{\partial x_{k + 1}^*}{\partial \theta}} &\leq \sum_{t = 0}^{k} D_{k+1,t} (\norm{\hat{x}_t - x_t^*} + \norm{\hat{u}_t - u_t^*}) ,
\end{align*}
for constants $C_{k,t}, D_{k+1,t} > 0$.
\end{theorem}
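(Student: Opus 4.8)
The plan is to prove both bounds simultaneously by \emph{forward induction on the stage index} $k$, exploiting the fact that the implicit derivative of \cref{prob:parameterized_oc}, as computed by the backward/forward Riccati recursion underlying \cref{alg:doc}, can be written as an explicit forward propagation. Concretely, the control Jacobian obeys a stagewise relation of the form $\frac{\partial u_k}{\partial \theta} = -\big(Q_{uu}^{(k)}\big)^{-1}\big(Q_{ux}^{(k)} \frac{\partial x_k}{\partial \theta} + Q_{u\theta}^{(k)}\big)$, while the state Jacobian obeys the forward recursion $\frac{\partial x_{k+1}}{\partial \theta} = f_{x_k}\frac{\partial x_k}{\partial \theta} + f_{u_k}\frac{\partial u_k}{\partial \theta} + f_{\theta_k}$, with base case $\frac{\partial x_0}{\partial \theta} = \frac{\partial \xi}{\partial \theta}$ being independent of the trajectory and hence computed exactly. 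I would first evaluate each recursion at the estimate $\hat{\boldsymbol{z}}$ and at the optimum $\boldsymbol{z}^*$ and subtract, so that the Jacobian error at each stage is expressed in terms of (i) perturbations of the stagewise derivative blocks $f_{x_k}, f_{u_k}, f_{\theta_k}, Q_{uu}^{(k)}, Q_{ux}^{(k)}, Q_{u\theta}^{(k)}$ evaluated at the two trajectories, and (ii) the already-propagated Jacobian errors carried in from stage $k$.

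The central technical ingredient is a perturbation bound for the inverse $\big(Q_{uu}^{(k)}\big)^{-1}$. Using the positive-definiteness assumption together with boundedness, $\norm{\big(Q_{uu}^{(k)}\big)^{-1}}$ is uniformly bounded on a neighborhood of the optimal trajectory, and the identity $A^{-1} - B^{-1} = A^{-1}(B - A)B^{-1}$ combined with the local Lipschitzness of $Q_{uu}^{(k)}$ in $(x_k, u_k)$ yields $\norm{\big(Q_{uu}^{(k)}(\hat{x}_k, \hat{u}_k)\big)^{-1} - \big(Q_{uu}^{(k)}(x_k^*, u_k^*)\big)^{-1}} \leq L\,(\norm{\hat{x}_k - x_k^*} + \norm{\hat{u}_k - u_k^*})$ for a constant $L$ depending on the condition-number bound. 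I would then bound the error in $\frac{\partial u_k}{\partial \theta}$ by a one-perturbation-at-a-time argument, adding and subtracting intermediate terms to isolate the contributions of the inverse-Hessian perturbation, the $Q_{ux}^{(k)}$ and $Q_{u\theta}^{(k)}$ perturbations, and the propagated error in $\frac{\partial x_k}{\partial \theta}$. Each term is controlled either by Lipschitzness times the local solution error at stage $k$, or by the boundedness assumption times the inductive bound on $\norm{\frac{\partial \hat{x}_k}{\partial \theta} - \frac{\partial x_k^*}{\partial \theta}}$. This is also where I would record that the \emph{true} Jacobians $\frac{\partial x_k^*}{\partial \theta}$ are themselves uniformly bounded, which follows by propagating the exact base case through the bounded forward recursion.

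With the control-Jacobian bound in hand, the state-Jacobian step is a direct application of the forward recursion: subtracting the recursions evaluated at $\hat{\boldsymbol{z}}$ and $\boldsymbol{z}^*$ writes $\frac{\partial \hat{x}_{k+1}}{\partial\theta} - \frac{\partial x_{k+1}^*}{\partial\theta}$ as a sum of (a) perturbations of $f_{x_k}, f_{u_k}, f_{\theta_k}$, each Lipschitz in the stage-$k$ solution error and multiplied by bounded true Jacobians, and (b) the bounded dynamics blocks multiplying the stage-$k$ Jacobian errors already controlled by the inductive hypothesis and the control-Jacobian step. Collecting terms, every contribution has the form (constant) $\times (\norm{\hat{x}_t - x_t^*} + \norm{\hat{u}_t - u_t^*})$ for some $t \leq k$, yielding the claimed summation form once $C_{k,t}$ and $D_{k+1,t}$ are defined to absorb the products of dynamics-Jacobian norms generated by unrolling the recursion. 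I expect the main obstacle to be the bookkeeping in the inductive step — specifically, verifying that no term depends on solution errors at stages $t > k$, so the sums genuinely terminate at $t = k$, and that all constants stay finite, which rests entirely on the uniform boundedness and well-conditioning assumptions rather than on any global structure of the problem.
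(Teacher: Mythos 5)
Your proposal follows essentially the same route as the paper's proof: forward induction on the stage index using the recursions $\partial u_k = -(Q_{uu}^{(k)})^{-1}(Q_{u\theta}^{(k)} + Q_{ux}^{(k)}\partial x_k)$ and $\partial x_{k+1} = f_{x_k}\partial x_k + f_{u_k}\partial u_k + f_{\theta_k}$, the inverse-perturbation identity for $(Q_{uu}^{(k)})^{-1}$, an add-and-subtract decomposition of the products, and a preliminary boundedness lemma for the exact Jacobians (the paper's \cref{lemma:bounded_jac}). The argument is correct and matches the paper's structure, including the exact base case $\partial\hat{x}_0 - \partial x_0^* = 0$ and the bookkeeping that keeps every term indexed by $t \le k$.
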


While our proposed algorithm avoids computing the intermediate Jacobian $J(\hat{\boldsymbol{z}}, \theta)$ explicitly, we nevertheless empirically validate the Jacobian estimate error on multiple nonlinear systems to get a quantitative understanding of the numerical precision of our approach in practice.
We run DDP for an increasing number of iterations to generate a sequence of approximate solutions $\hat{\boldsymbol{z}}$ and plot the Jacobian estimate error $\norm{J(\hat{\boldsymbol{z}}, \theta) - \frac{\partial \boldsymbol{z}^*}{\partial \theta}}$ as a function of the iterate error $\norm{\hat{\boldsymbol{z}} - \boldsymbol{z}^*}$. \cref{fig:grad_error} shows the results for the quadrotor system, with plots for the other systems given in \cref{appendix:comparisons}.
Since a closed-form expression of $\frac{\partial \boldsymbol{z}^*}{\partial \theta}$ is not available, we use numerical differentiation (finite differences) to compute the ground truth Jacobian approximately as suggested by \citet{blondel2022efficient}.

\begin{figure}[h]
\centering
\includegraphics[width=0.39\textwidth]{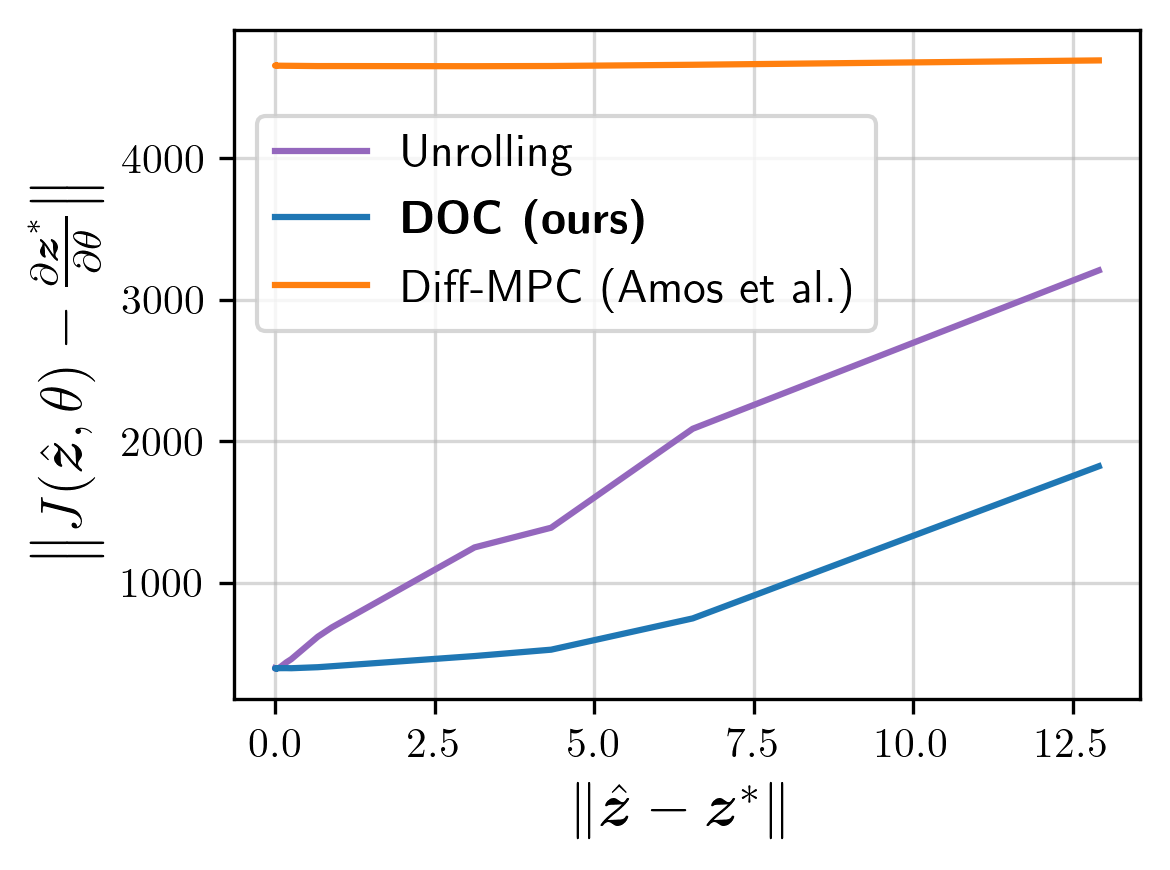}
\caption{Jacobian estimate errors on the quadrotor system as a function of DDP iterate error.}
\label{fig:grad_error}
\end{figure}

``Unrolling'' corresponds to unrolling the DDP iterations as a differentiable compute graph and backpropagating to compute Jacobians via \ac{AD}.
It should be noted there is irreducible error due to the use of finite differences to approximate $\frac{\partial \boldsymbol{z}^*}{\partial \theta}$.
The proposed DOC outperforms both unrolling and Diff-MPC \citep{amos2018differentiable} in terms of numerical error. Diff-MPC has nearly constant error regardless of distance to the optimal solution due to dropping the second-order dynamics derivative terms in the algorithm. In conclusion, we observe a qualitative benefit of our IFT-based approach that supports the theory: there is a large region around the optimal trajectory $\boldsymbol{z}^*$ where the Jacobian error grows very slowly, suggesting our approach is advantageous for producing accurate derivatives in the context of differentiable control.

Further gradient error as well as timing comparisons between our proposed \cref{alg:doc} and the algorithms of \citet{amos2018differentiable}, \citet{jin2020pontryagin}, and \citet{dinev2022differentiable} are provided in \cref{appendix:comparisons}.

\section{Differentiable Tube-based MPC} \label{sec:dt_mpc}



When experiencing large disturbances, MPC under the nominal system model can fail. Although tube-based MPC is designed to tackle such a problem, determining the cost parameters necessary to achieve robustness is difficult in practice due to the nonlinear dependence between the nominal controller and the ancillary controller. Moreover, tuning fixed parameters offline to work for one scenario does not generalize for online, real-time MPC applications, where there may exist, e.g., a large sim-to-real gap. This motivates a method that can update the robust controller parameters automatically and online.

We start by redefining the nominal MPC introduced in \cref{prob:nominal_mpc} to be compatible with the differentiable optimal control framework presented in \cref{sec:doc}.
The nominal controller seeks to optimize the \textit{parameterized} safe nominal trajectory $\bar{\boldsymbol{\tau}}$ in the absence of
disturbances by solving the following optimization:
\begin{problem}[Differentiable Nominal MPC]\label{prob:diff_nominal_mpc}
\begin{gather*}
\begin{aligned}
\boldsymbol{\bar{\tau}}(\bar{\theta}) = \argmin_{\boldsymbol{\tau}} \bar{J}(\boldsymbol{\tau}, \bar{\theta}) \defeq {}& \sum_{k = 0}^{N - 1} \bar{\ell}(x_k, u_k,\bar{\theta}) + \bar{\phi}(x_N,\bar{\theta}),
\end{aligned} \\
\begin{aligned}
\text{subject to} \ \ &x_{k + 1} = f(x_k, u_k,\bar{\theta}), \quad x_0 = \bar{x}_t,  \\
&u_k \in \mathbb{U}, \quad x_k \in \bar{\mathbb{X}}(\bar{\theta}) \subset \mathbb{X}.
\end{aligned}
\end{gather*}
In contrast with the original definition of \cref{prob:nominal_mpc}, the dynamics $f$, running cost $\bar{\ell}$, and terminal cost $\bar{\phi}$ have been parameterized with $\bar{\theta}$ to denote the fact that these parameters are made tunable through the use of differentiable optimization.
Similarly, $\bar{\mathbb{X}}$ represents the tightened state constraint set, now parameterized by $\bar{\theta}$. This parameterization enables the online determination of the effective size of $\bar{\mathbb{X}}$ through differentiable optimization, instead of hand tuning it using, e.g., offline data.
\end{problem}

In our algorithm, the parameterization of $\bar{\mathbb{X}}$ with $\bar{\theta}$ is enabled through the adoption of barrier states for enforcing safety constraints and their penalizations in the cost function. In the use of DBaS as described in \cref{subsec: dbas}, an additional state within the cost function is introduced. For example, for the cost function $J$, we will assume that it has the partitioned form $\hat{J} = J + \sum_{k = 0}^{N} q_b b_k^2$, with $q_b > 0$ being a tunable parameter quantifying the strength of the barrier. This enables the online determination of the effective size of the feasible region $\bar{\mathbb{X}}$ through adaptation of $q_b$ based on task performance and predicted safety of the true system. Nonetheless, it should be noted that the proposed method is general and this is one specific choice that the authors find to work very well in practice. We will show different controller designs and solutions in \cref{sec:experiments}.

Consequently, the differentiable ancillary MPC is defined by bringing the original problem formulation from \cref{prob:ancillary_mpc} into the form of \cref{prob:parameterized_oc}:
\begin{problem}[Differentiable Ancillary MPC] \label{prob:diff_ancillary_mpc}
\begin{gather*}
\begin{aligned}
\boldsymbol{\tau}^*(\theta) = \argmin_{\boldsymbol{\tau}} J(\boldsymbol{\tau}, \theta, t) \defeq & \sum_{k = 0}^{N - 1} \ell(x_k, u_k, \bar{x}_k, \bar{u}_k, \theta) \\
 & \quad + \phi(x_N, \bar{x}_N, \theta),
 \end{aligned}\\
\text{subject to} \ \  x_{k + 1} = f(x_k, u_k, \theta), \quad x_0 = x_t, \quad u_k \in \mathbb{U} ,
\end{gather*}
where the dynamics $f$, running cost $\ell$, and terminal cost $\phi$ have been parameterized with $\theta$.
\end{problem}

It is worth highlighting again that the safety constraint does not appear in \cref{prob:diff_ancillary_mpc} as the state equation represents the safety-embedded system through the use of the DBaS as mentioned earlier. This has the effect of increasing the state dimension of the problem but does not affect the algorithm computationally as the input dimension is unchanged (DDP-based methods scale with the size of the control input but not the state input~\cite{mayne1966second}).
Note that this means that $\theta$ includes the DBaS tunable parameters such as $\gamma$ and $\alpha$ for the DBaS feedback and the relaxation of the barrier condition for the recursive feasibility of the algorithm when needed.

Through the introduction of the \ac{DBaS}, an additional degree of freedom that helps determine the tightness of the constraint satisfaction is added to the control problem. Therefore, our method is able to adjust the conservativeness of the nominal controller while also adapting the tube shape and size by updating the cost parameters of the ancillary controller based on the disturbances encountered. This allows for an efficient computational and engineering framework for robust control. Namely, the nominal controller can be tuned for task completion in the well-understood deterministic case, as is typical in standard nonlinear control design. The adaptive ancillary controller can then be used to augment the nominal controller for robustness, responding to disturbances when necessary. 

Next, we propose an algorithm that applies the DOC methodology presented in \cref{sec:doc} to the real-time tuning of tube-based controllers of the form given by \cref{prob:diff_nominal_mpc} and \cref{prob:diff_ancillary_mpc}. In order to optimize both the nominal and ancillary controller, we propose to use a loss function of the form
\begin{align}\label{eq:dt_mpc_loss}
L(\boldsymbol{\tau}^*(\theta), \bar{\boldsymbol{\tau}}(\bar{\theta})) = \norm{\boldsymbol{x}^*(\theta) - \bar{\boldsymbol{x}}(\bar{\theta})}_2^2 + \norm{\boldsymbol{b}^*(\theta)}_2^2,
\end{align}
but modify the loss in the experiments as necessary to capture task-specific objectives. Notably, this choice of objective is beneficial as it captures the primary goal of tube-based MPC, which is to drive the true state $x$ towards the nominal state $\bar{x}$ while maintaining safety of the true system. Furthermore, this choice allows the parameters of the nominal MPC to be updated based on the expected performance of the ancillary MPC, allowing the nominal MPC to respond to the environment in an optimal manner. The general differentiable tube-based MPC algorithm is presented in \cref{alg:dt_mpc} and is a straightforward application of \cref{alg:doc}.

\begin{algorithm}[h]
\caption{Differentiable Tube-based Model Predictive Control (DT-MPC)} \label{alg:dt_mpc}
\KwIn{Initial nominal parameters $\bar{\theta}$ and ancillary parameters $\theta$, learning rate~$\eta$, task horizon $H$}

$\bar{x}_0 \gets x_0$;

\For{$t = 0, \ldots, H$}{

    $\bar{\boldsymbol{\tau}}(\bar{\theta}) \gets$ Solve \cref{prob:diff_nominal_mpc} starting from $\bar{x}_t$;
    
    $\boldsymbol{\tau}^*(\theta) \gets$ Solve \cref{prob:diff_ancillary_mpc} starting from $x_t$;

    $\nabla_{\bar{\theta}} L, \nabla_{\theta} L \gets$ Compute gradients of $L(\boldsymbol{\tau}^*(\theta), \bar{\boldsymbol{\tau}}(\bar{\theta}))$ using \cref{alg:doc};

    \tcp{Gradient descent step}
    $\bar{\theta} \gets \bar{\theta} - \eta \nabla_{\bar{\theta}} L$; \quad $\theta \gets \theta - \eta \nabla_{\theta} L$; \quad 

    \tcp{True and nominal dynamics}
    $x_{t + 1} \gets f(x_t, u_t^*) + w_t$; \quad $\bar{x}_{t + 1} \gets f(\bar{x}_t, \bar{u}_t)$;
}
\end{algorithm}

\Cref{fig:architecture} visualizes the information and gradient flow of the proposed architecture. In summary, \cref{alg:dt_mpc} solves the nominal MPC (\cref{prob:diff_nominal_mpc}) from the current nominal state $\bar{x}_t$ with fixed nominal parameters $\bar{\theta}$, which consist of the DBaS parameters and task-dependent cost function weights (Line 3). The solution $\bar{\boldsymbol{\tau}}$ is passed to the ancillary MPC and \cref{prob:diff_ancillary_mpc} is solved starting from the current state $x_t$ for fixed ancillary parameters $\theta$, which consist of the DBaS parameters and the cost function weights that determine the tracking cost of the feedback controller (Line 4). Gradients of a task-dependent loss function are computed with respect to both the nominal and the ancillary parameters through \cref{alg:doc} (Line 5), and a gradient update is performed (Line 6). Finally, the ancillary control is sent to the true system, and the controller receives an observation of the next state perturbed by the disturbance $w_t$. Meanwhile, the nominal system propagates forward in the absence of noise or uncertainty (Line 7), and the process is repeated.

The proposed algorithm improves upon and solves problems within previous contributions in multiple directions. On the one hand, for complex problems, many parameters need to be tuned and carefully selected, e.g., through trial and error, in order for the controller to perform the desired task well. The proposed DT-MPC provides a theoretically sound and practical approach to auto-tune the different parameters involved to achieve a high level of autonomy. On the other hand, not only does the proposed algorithm provide robustness to the safe trajectory optimization problem through tube-based MPC, but it also provides adaptability and auto-tuning of the DBaS weight in the DBaS-iLQR MPC formulation by \citet{cho2023model}. Rather than keeping the weight fixed, which might result in an overly-conservative solution that withstands disturbances but prevents task completion, the proposed DT-MPC results in an adaptive, time-varying weight that is used to solve the safety-critical MPC and therefore better approximates the original optimal control problem. This is especially true when the nominal controller uses a poor model that might guide the ancillary controller to violate the safety condition. Moreover, through the use of relaxed barriers along with DDP, the work by \citet{feller2016relaxed} is generalized to the nonlinear case while auto-tuning the relaxation parameter $\alpha$ online. This allows us to recover the original barrier when needed.

\section{Experiments}
\label{sec:experiments}

The generality of the proposed DT-MPC is established through benchmarks on five nonlinear robotics systems subject to highly non-convex constraints such as dense obstacle fields. 
Furthermore, we present a hardware experiment showing the ability of DT-MPC to adapt to an out-of-distribution test case.
In the experiments that follow, the nominal MPC is tuned to perform the task successfully and then the algorithms are deployed on the \textit{true} system, without further tuning. This puts the proposed framework to the test, especially in comparison to the non-adaptive, nonlinear tube-based MPC. This suite of experiments showcases how the differentiable framework enables robust control through online adaptation of the necessary parameters to accomplish the task while maintaining safety.
\Cref{table:percentages} summarizes the results of the five simulation experiments, detailing the overall task completion percentage as well as the percentage of safety violations for each algorithm.
Specific numerical information related to the experiments, such as the parameterization for each controller as well as timing comparisons between algorithms, is provided in \cref{appendix:experiments}.

\begin{table*}[th]
\begin{center}
\begin{tabular}{ c|g|r|g|r|g|r|g|r|g|r| }
\hhline{~|*{10}{-}|}
  & \multicolumn{2}{c|}{\cellcolor{gray!5}Dubins Vehicle} & \multicolumn{2}{c|}{\cellcolor{gray!5}Quadrotor} &
  \multicolumn{2}{c|}{\cellcolor{gray!5}Robot Arm} & 
  \multicolumn{2}{c|}{\cellcolor{gray!5}Cheetah} &
  \multicolumn{2}{c|}{\cellcolor{gray!5}Quadruped} \\
\hhline{~|*{10}{-}|}
                &  Successes & Violations  & Successes & Violations & Successes & Violations & Successes & Violations & Successes & Violations\\
\hline
 NT-MPC &         14\% & 0\% & 14\% & 20\%   &  0\% &  56\%  & 26\% & 4\% & 20\% & 0\% \\
 \textbf{DT-MPC (ours)}  & \textbf{100\%} &  0\% & \textbf{76\%} &  \textbf{4\%}   &   \textbf{78\%} &  \textbf{10\%}  & \textbf{70\%} & \textbf{0\%} &\textbf{ 64\%} & 0\% \\
 \hline
\end{tabular}
\vspace{5pt}
\caption{Success and safety violation percentage for each algorithm over 50 trials per task. For successes, higher is better, while for violations, lower is better. Success is defined as arriving close to a target state, while a violation is defined as colliding with an obstacle or violating the safety constraints. The magnitude of disturbances are upper-bounded by $0.05$, $0.1$, $0.1$, $0.05$ and $0.05$ for each system, respectively.
}
\label{table:percentages}
\end{center}
\end{table*}

\subsection{Dubins Vehicle}

As an illustrative example, a Dubins vehicle task is set up where the goal is to reach a target state of $(10, 10)$~\SI{}{\meter} while avoiding obstacles, as shown in \cref{fig:dubins_comparison}. The vehicle starts at the origin facing the upper-right direction and, at every timestep, receives disturbances sampled uniformly from the range $[-0.05, 0.05]$ in both the xy position as well as its orientation. The nominal MPC parameters are kept fixed, while the ancillary MPC is allowed to adapt through minimization of the loss defined in \cref{eq:dt_mpc_loss}.

\begin{figure}[h]
\centering
\includegraphics[width=0.75\linewidth]{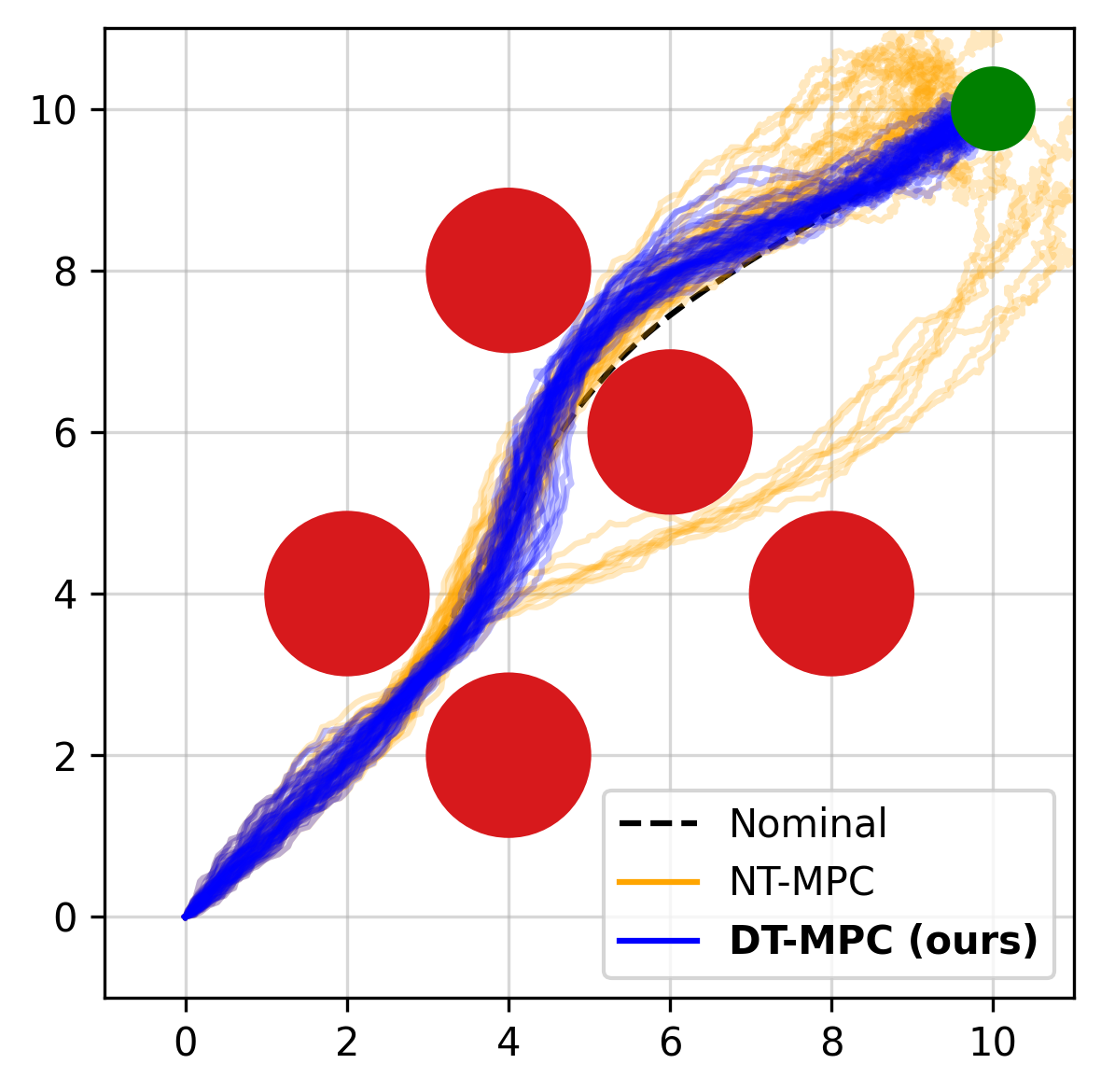}
\caption{Controlled Dubins vehicle trajectories subject to large noise. NT-MPC trajectories diverge from the nominal trajectory and the uncertainty increases over time. Meanwhile, DT-MPC adapts to the environment, maintaining safety and robust task performance.}
\label{fig:dubins_comparison}
\end{figure}

Controlled trajectories under both tube-based MPC algorithms for 50 different disturbance realizations are plotted in \cref{fig:dubins_comparison}. DT-MPC bounds the true system within a safer tube around the nominal trajectory, and the trajectories remain on the same side of the central obstacle. Additionally, the size of the tube is updated based on safety (proximity to obstacles), while the NT-MPC tube grows as the trajectory proceeds.

While both algorithms remain safe and avoid collisions (see \cref{table:percentages}), only DT-MPC is able to complete the task the majority of the time. This is attributed to the fact that the tube around the nominal trajectory is both tighter and safer allowing for more robust control. Furthermore, a qualitatively beneficial emergent behavior is observed where the tube of trajectories becomes tighter around the nominal trajectory when the state is most unsafe.

\subsection{Quadrotor}

The second experiment is a quadrotor navigating through a dense field of spherical obstacles, where the goal is to reach the target location of $(10, 10, 10)$~\SI{}{\meter}, starting from the origin, in the presence of large disturbances. The system experiences disturbances in the position and Euler angles sampled uniformly from the range $[-0.01, 0.01]$, while disturbances in the linear and angular velocities are sampled from a larger range of $[-0.1, 0.1]$ --- this choice emulates large unmodeled forces and moments in the dynamics. Like the Dubins vehicle experiment, the nominal MPC parameters are fixed, while the ancillary MPC is adapted through minimization of \cref{eq:dt_mpc_loss}, balancing tracking performance with safety.

From \cref{fig:quadrotor_comparison} and \cref{table:percentages}, it can be seen that NT-MPC is unable to maintain robustness, resulting in 20\% of the trajectories hitting obstacles or diverging from the nominal system, and only 14\% of trajectories are able to solve the task. On the other hand, the proposed DT-MPC bounds the true system within a safer tube around the nominal trajectory by tuning the ancillary MPC in real-time, drastically increasing the success rate to 76\% with only 4\% violations.

\subsection{Robot Arm}
\begin{figure}
\centering
\vspace{0pt}
\includegraphics[width=0.75\linewidth]{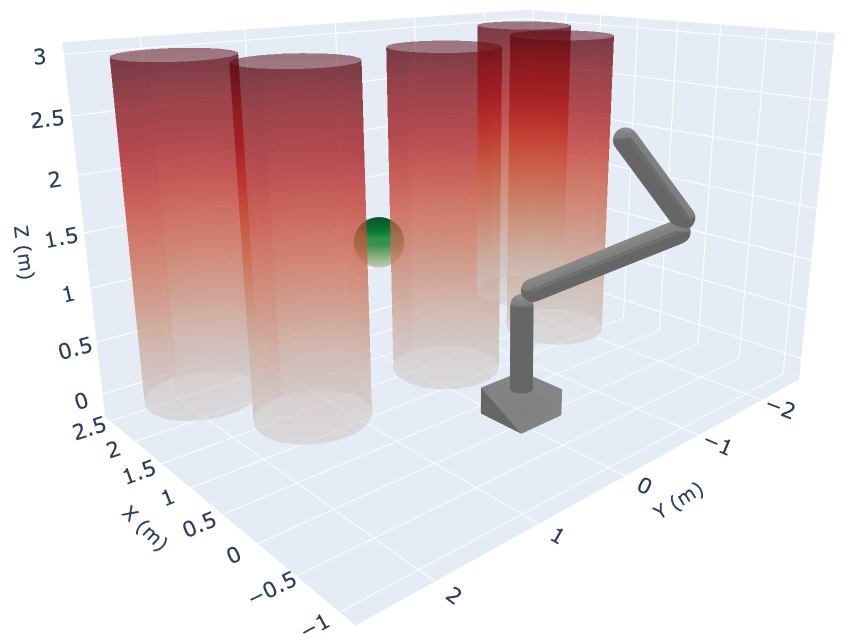}
\caption{Environment for the robot arm task.}
\label{fig:robot_arm_env}
\end{figure}

A 6-DOF torque-controlled robot arm with three links is used to test the ability of the proposed approach to adapt both the nominal and ancillary controller to respond to disturbances. A visualization for the task is given in \cref{fig:robot_arm_env}. The goal is to bring the end effector to the location of $(2, 0, 1)$~\SI{}{\meter}, starting from a random initial orientation, in the presence of large disturbances and obstacles.

Disturbances in the angles and angular velocities at every timestep are sampled uniformly from the ranges $[-0.01, 0.01]$~\SI{}{\radian} and $[-0.1, 0.1]$~\SI{}{\radian\per\second}, respectively, which, like the quadrotor experiment, corresponds to very large unmodeled forces and moments. For this task, the nominal controller (tuned for deterministic task completion) is too aggressive for the magnitude of disturbances received, leading to a large number of failures even when controlled using NT-MPC. Meanwhile, the proposed DT-MPC is able to \emph{adapt} the nominal controller to be less aggressive (by tuning the barrier state cost weights) while simultaneously tuning the ancillary controller to be more robust.

\subsection{Cheetah}

\begin{figure*}[h]
    \centering
    {\includegraphics[width=\textwidth]{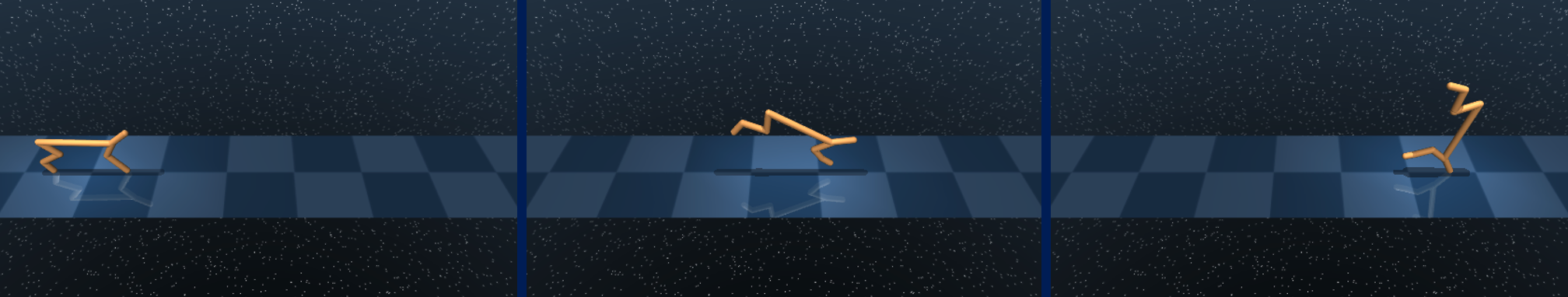}
    \subcaption{Nonlinear tube-based MPC. The lack of adaptation causes the cheetah to flip over.} 
    \label{fig:cheetah_no_tuning}}\par\vfill
    {\includegraphics[width=\textwidth]{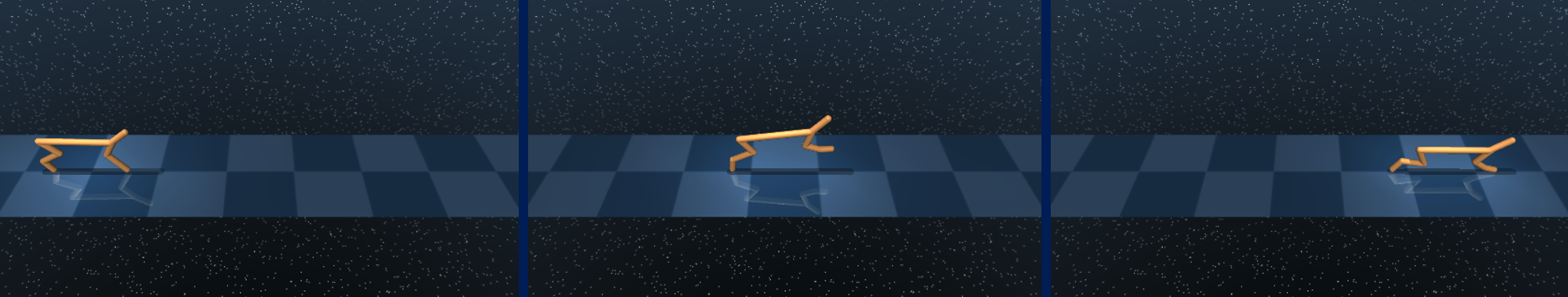}
    \subcaption{Differentiable tube-based MPC. Adaptation produces a stable running behavior and allows the cheetah to reach the target safely.}
    \label{fig:cheetah_with_tuning}}
    \caption{Comparison of tube-based MPC approaches on the DeepMind Control Suite cheetah robotics system \cite{tunyasuvunakool2020}.}
    \label{fig:cheetah}
\end{figure*}

In the next experiment, we design a locomotion task for the cheetah model provided by the DeepMind Control Suite~\citep{tunyasuvunakool2020}. The objective of the controller is to drive the cheetah to a target position of \SI{5}{\meter} in \SI{3}{\second}, requiring an average velocity of \SI{1.67}{\meter/\second}, while maintaining safe operation. Safety is enforced by constraining the pitch angle of the cheetah to stay within the range $[-\pi/4, \pi/4]$ --- when the pitch angle becomes larger than these bounds, it is difficult in general for the cheetah to recover, especially in the presence of large noise. Disturbances are injected into the velocity states and are sampled uniformly from $[-0.05, 0.05]$. Additional control noise is added sampled from $\mathcal{N}(0, 0.02)$. 

For DT-MPC, we use the loss $L = \norm{\boldsymbol{p}_x^* - \bar{\boldsymbol{p}}_x}_2^2 + \norm{\boldsymbol{b}^*}_2^2$, where $p_x$ is the state corresponding to the x-position, and allow the running cost and barrier state parameters of the nominal MPC and the ancillary MPC to be adapted online. Notably, the terminal cost is kept fixed for the nominal MPC to ensure the task objective information is not lost. The loss function is selected to only penalize the x-position as otherwise all of the states are weighted equally and the ancillary MPC prefers to default to a stable (yet safe) configuration rather than complete the task.

The results in \cref{table:percentages} show that, while NT-MPC fails to reach the target in the majority of the cases and occasionally violates the safety of the system, the proposed DT-MPC is able to adapt the entire robust MPC architecture for task completion while maintaining safe control.
A sample visualization of one trial per algorithm is provided in \cref{fig:cheetah}.

\subsection{Quadruped}

\begin{figure*}[h]
    \centering
    {\includegraphics[width=\textwidth]{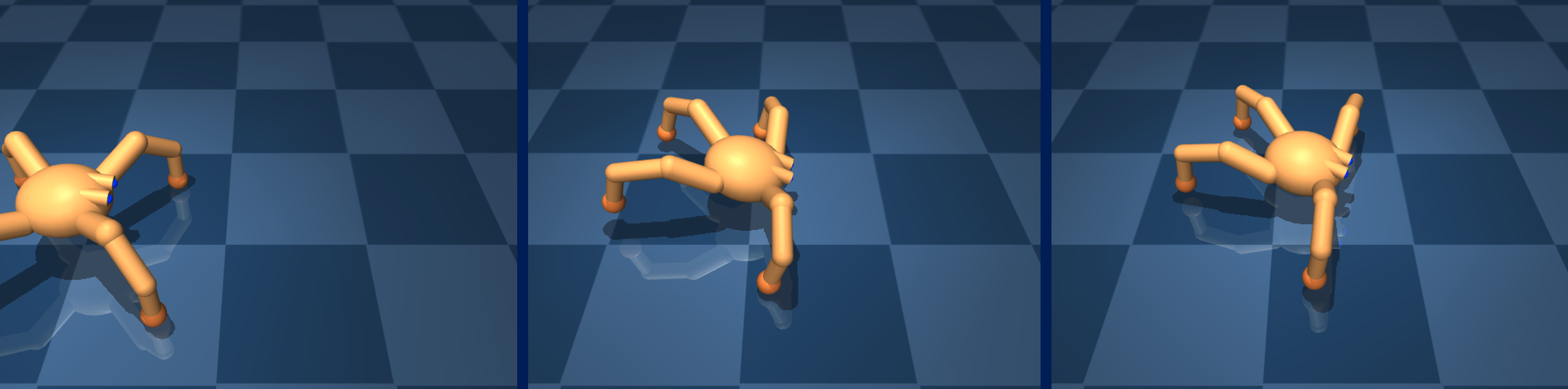}
    \subcaption{Nonlinear tube-based MPC. The quadruped gets stuck because the true state diverges from the nominal due to the disturbances.}
    \label{fig:quadruped_nt_mpc}}\par\vfill
    {\includegraphics[width=\textwidth]{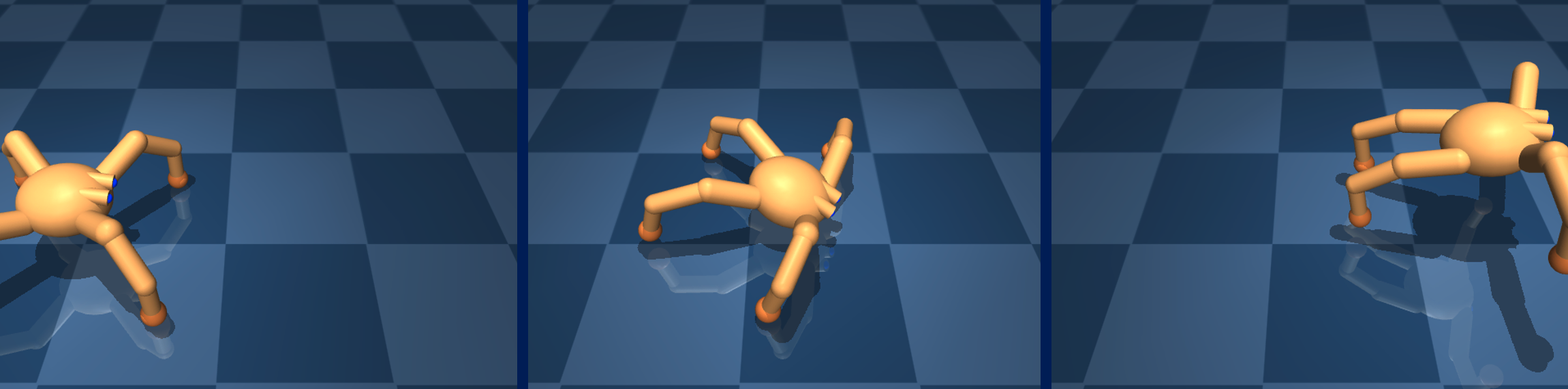}
    \subcaption{Differentiable tube-based MPC. The adaptation enables the quadruped to successfully reach the target despite large disturbances.}
    \label{fig:quadruped_dt_mpc}}
    \caption{Comparison of tube-based MPC approaches on the DeepMind Control Suite quadruped system \cite{tunyasuvunakool2020}.}
    \label{fig:quadruped}
\end{figure*}

The final simulation experiment is a locomotion task using the quadruped model provided by the DeepMind Control Suite~\citep{tunyasuvunakool2020}. The system has 56 states and 12 controls, and the objective is to drive the quadruped towards a target position of \SI{2.5}{\meter} in \SI{2}{\second} requiring an average velocity of \SI{1.25}{\meter/\second}. Dynamical uncertainty is simulated by perturbing the velocity states with a disturbance sampled from the range $[-0.05, 0.05]$ and introducing control noise sampled from $\mathcal{N}(0, 0.01)$. Safety is determined by constraining the Euler angles of the system such that the quadruped does not flip over ($[-\pi/2, \pi/2]$ for the roll and yaw angles and $[-1.0, 1.0]$ for the pitch angle). \Cref{fig:quadruped} provides a visualization of the model and task.

The loss for DT-MPC is selected to be the same as the cheetah experiment, namely $L = \norm{\boldsymbol{p}_x^* - \bar{\boldsymbol{p}}_x}_2^2 + \norm{\boldsymbol{b}^*}_2^2$. Similarly, both the nominal and ancillary MPC running cost and barrier state parameters are adapted online.

Like the robot arm and cheetah experiment, the nominal controller in NT-MPC has no awareness of the true task performance. Therefore, the system controlled under NT-MPC has difficulty reaching the target. While the deterministic nominal trajectory reaches the target state during every trial, the ancillary controller cannot keep up with the desired aggressive jumping maneuver due to the disturbances. This causes the gap between the nominal trajectory and true state to grow over time. Meanwhile, the system controlled under DT-MPC allows the nominal controller to adapt based on the tracking performance of the ancillary MPC. Furthermore, the ancillary MPC is adapted online to keep the system safe. This results in successful aggressive jumping maneuvers such as the one shown in the bottom right of \cref{fig:quadruped}. Overall, DT-MPC remains safe while increasing the task success rate by over 200\% (20\% success rate for NT-MPC vs. 64\% success rate for DT-MPC).

\subsection{Hardware Experiment - Robotarium}

\begin{figure*}[h]
    \centering
    {\hspace*{\fill}\begin{subfigure}[b]{0.33\textwidth}
        \includegraphics[width=\textwidth]{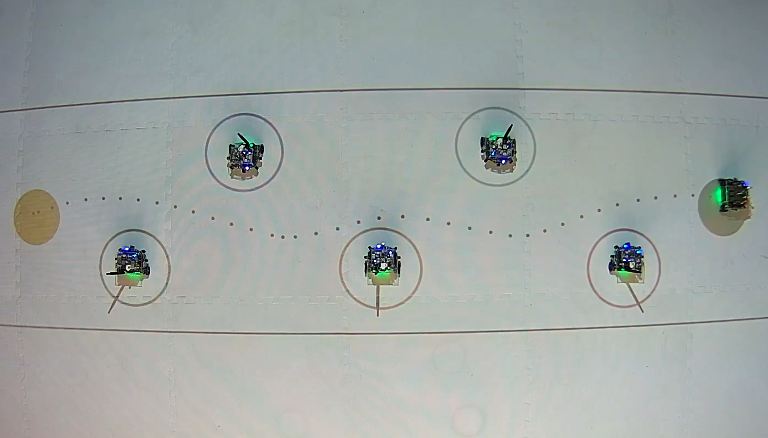}
        \caption{NT-MPC --- stationary obstacles}
        \label{fig:robotarium_nt_mpc_stationary}
    \end{subfigure}\hfill
    \begin{subfigure}[b]{0.33\textwidth}
        \includegraphics[width=\textwidth]{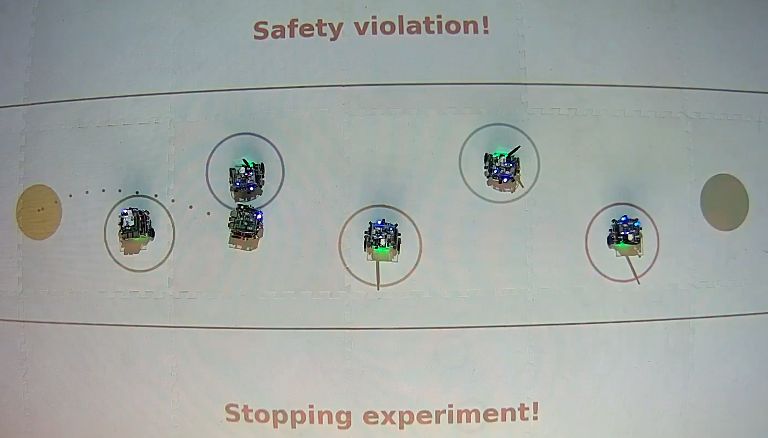}
        \caption{NT-MPC --- moving obstacles}
        \label{fig:robotarium_nt_mpc_moving}
    \end{subfigure}\hspace*{\fill}}\par\vspace{0.5em}
    {\includegraphics[width=\textwidth]{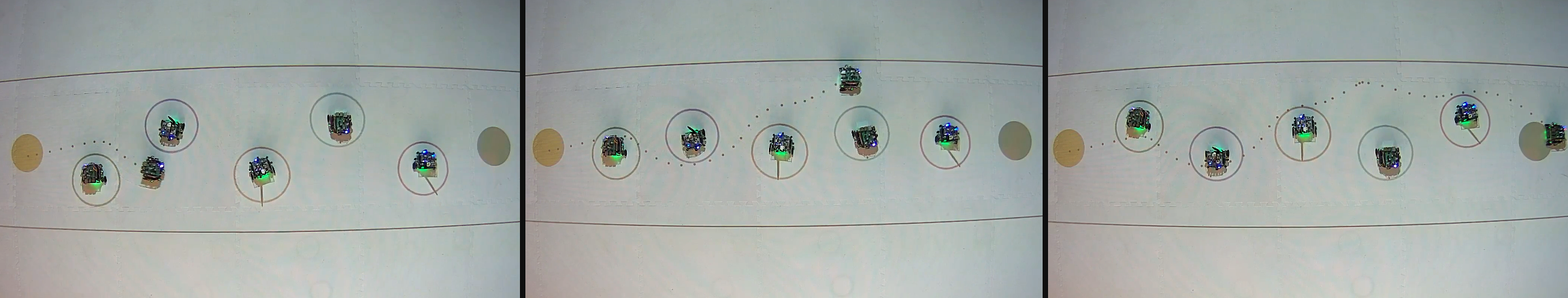}
    \subcaption{DT-MPC --- moving obstacles}
    \label{fig:robotarium_dt_mpc}}
    \caption{Comparison of tube-based MPC approaches on the Robotarium hardware platform \cite{wilson2020robotarium}. \textbf{(a)} NT-MPC successfully reaches the target when the task is in-distribution. \textbf{(b)} NT-MPC is unable to adapt when the task is out-of-distribution and quickly violates the safety of the system. \textbf{(c)} DT-MPC enables fast adaptation to a novel task, resulting in safe, robust control.}
    \label{fig:robotarium}
\end{figure*}

Finally, we implement the proposed methodology on the Robotarium (\cref{fig:robotarium}), a state-of-the-art, remotely accessible robotics hardware platform for multi-agent control \cite{wilson2020robotarium}.
DT-MPC is applied for the safe, real-time control of a differential drive robot under a shifting task distribution.
The controller is tuned offline on a given task distribution but needs to \emph{adapt} online to a new, never-before-seen task.
In the parlance of machine learning, the controller needs to generalize to a novel task that is outside of the training distribution.

The goal of the controller is to safely navigate through a corridor while avoiding the other agents, modeled as circular obstacles.
As a baseline, the NT-MPC controller is tuned on a fixed task distribution where the other agents remain stationary throughout the experimental trial.
NT-MPC is robust to disturbances due to both modeling error and process noise and can reach the target state successfully (\cref{fig:robotarium_nt_mpc_stationary}).

During test time, the other agents are allowed to move with some small random velocity.
The non-adaptive NT-MPC is unable to respond to the change in task distribution effectively, resulting in safety violations early in the trial (\cref{fig:robotarium_nt_mpc_moving}).
We attempt 10 retries of the NT-MPC controller, and during all of the trials, the agent violates the safety of the system by driving too close to the first two obstacles.
Meanwhile, the DT-MPC agent quickly finds a suitable controller tuning that maintains safety while ensuring robust task completion (\cref{fig:robotarium_dt_mpc}).
The addition of online differentiable optimization for adaptation of the cost function parameters allows the DT-MPC agent to adapt to the new environment quickly and efficiently. The JAX-based Python implementation of our method runs at over \SI{50}{\hertz} on the Robotarium --- we expect further speedups can be achieved through a lower-level implementation (e.g., in C++).

\section{Conclusion}
\label{sec:conclusion}

This work has proposed a general algorithm for differentiable optimal control with a unified perspective derived from applying the implicit function theorem to the lower-level control problem.
We have detailed how to apply the methodology to the real-time tuning of tube-based MPC controllers, illustrating a principled manner for which to learn the problem parameters of any nonlinear control algorithm.
Our framework allows for the robust, adaptive, and real-time control of nonlinear systems in the presence of non-convex constraints and large uncertainties.
The generality of the method is verified for the robust control of multiple nonlinear systems both in simulation and on real hardware.
Furthermore, we provide theoretical guarantees on the numerical precision of our approach and show how our work generalizes state-of-the-art differentiable control algorithms.
We emphasize the fact that our proposed architecture enables the real-time tuning of \emph{any} tube-based MPC algorithm.
Promising future extensions of the work include learning the dynamical uncertainty and adapting the model used by the MPC controllers.

\section*{Acknowledgments}
This work is supported by NASA under the ULI grant 80NSSC22M0070 and by NSF CPS award 1932288.

\footnotesize
\bibliographystyle{plainnat}
\bibliography{references}

\normalsize
\begin{appendices}
\crefalias{section}{appendix}

\section{Proof of Proposition 2}
\label{appendix:optimality_conditions}

\setcounter{theorem}{1}
\begin{proposition}[Optimality conditions of \cref{prob:parameterized_oc}]
Define the real-valued function $\mathcal{L}$ for \cref{prob:parameterized_oc}, called the \emph{Lagrangian}, as
\begin{align}
\begin{aligned}
    \mathcal{L}(\boldsymbol{z}, \theta) = {}& \sum_{t = 0}^{N - 1} \ell(x_k, u_k, \theta) + \lambda_{k + 1}^\top (f(x_k, u_k, \theta) - x_{k + 1}) \\
    & + \lambda_0^\top (\xi(\theta) - x_0) + \phi(x_N, \theta), \label{eq:lagrangian}
\end{aligned}
\end{align}
where $\lambda_k \in \mathbb{R}^{n_x}$, $k = 0, 1, \ldots, N$ are the Lagrange multipliers for the dynamics and initial condition constraints, and $\boldsymbol{z} \defeq (\boldsymbol{\tau}, \boldsymbol{\lambda}) = (\lambda_0, x_0, u_0, \ldots, \lambda_N, x_N)$ for notational compactness.

Let $\boldsymbol{\tau}^*$ be a solution to \cref{prob:parameterized_oc} for fixed parameters~$\theta$. Then, there exists Lagrange multipliers~$\boldsymbol{\lambda}^*$ which together with $\boldsymbol{\tau}^*$ satisfy $\nabla_{\boldsymbol{z}} \mathcal{L}(\boldsymbol{z}^*, \theta) = 0$.
\end{proposition}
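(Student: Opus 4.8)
The plan is to recognize \cref{prob:parameterized_oc} (in the control-unconstrained setting under consideration here) as a smooth equality-constrained nonlinear program and to apply the classical first-order necessary conditions, i.e. the method of Lagrange multipliers. I would collect the feasibility requirements into a single constraint map $g(\boldsymbol{\tau}, \theta) = 0$ whose blocks are the initial-condition constraint $x_0 - \xi(\theta) = 0$ together with the $N$ dynamics constraints $x_{k+1} - f(x_k, u_k, \theta) = 0$ for $k = 0, \ldots, N-1$. Since $\ell$, $\phi$, $f$, and $\xi$ are assumed continuously differentiable, both the objective $J$ and the map $g$ are $C^1$, so a multiplier theorem applies as soon as a constraint qualification holds at the minimizer $\boldsymbol{\tau}^*$.

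The central step, and the one I expect to be the only genuine obstacle, is verifying a constraint qualification; I would use the linear-independence condition (LICQ), namely that the Jacobian $\pdv{g}{\boldsymbol{\tau}}$ has full row rank at $\boldsymbol{\tau}^*$. Here the temporal structure of the problem does all the work. Ordering the constraint blocks by time and differentiating with respect to the states $(x_0, x_1, \ldots, x_N)$, each block $x_{k+1} - f(x_k, u_k, \theta)$ contributes an identity $+I$ from its own terminal state $x_{k+1}$ and a term $-f_{x_k}$ from the preceding state, so the state-Jacobian is block lower-bidiagonal with identity matrices on the diagonal. Its determinant is therefore $1$ regardless of the dynamics, so it is nonsingular, $\pdv{g}{\boldsymbol{\tau}}$ has full row rank, and LICQ holds at every feasible point. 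Everything beyond this is bookkeeping.

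Granting LICQ, the Lagrange multiplier theorem yields a multiplier vector $\boldsymbol{\lambda}^* = (\lambda_0^*, \ldots, \lambda_N^*)$ with $\nabla_{\boldsymbol{\tau}}\bigl(J + (\boldsymbol{\lambda}^*)^\top g\bigr) = 0$ at $\boldsymbol{\tau}^*$. Identifying this augmented objective with the Lagrangian $\mathcal{L}$ of \cref{eq:lagrangian} --- pairing $\lambda_{k+1}$ with the $k$-th dynamics constraint and $\lambda_0$ with the initial condition --- gives the stationarity block $\nabla_{\boldsymbol{\tau}} \mathcal{L}(\boldsymbol{z}^*, \theta) = 0$ by construction. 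It then remains only to observe that the multiplier block of the gradient, $\nabla_{\boldsymbol{\lambda}} \mathcal{L}(\boldsymbol{z}^*, \theta)$, simply reproduces the feasibility constraints $g(\boldsymbol{\tau}^*, \theta) = 0$, which hold because $\boldsymbol{\tau}^*$ solves \cref{prob:parameterized_oc}. Assembling the $\boldsymbol{\tau}$- and $\boldsymbol{\lambda}$-blocks gives $\nabla_{\boldsymbol{z}} \mathcal{L}(\boldsymbol{z}^*, \theta) = 0$, as claimed.

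As a constructive alternative that exhibits the multipliers explicitly and sidesteps an abstract appeal to LICQ, I could eliminate the states by forward-propagating the dynamics from $x_0 = \xi(\theta)$, reducing the problem to an unconstrained minimization over $(u_0, \ldots, u_{N-1})$. Setting $\nabla_{u_k} J = 0$ and expanding by the chain rule, one recognizes the backward adjoint recursion $\lambda_k^* = \nabla_{x_k}\ell + f_{x_k}^\top \lambda_{k+1}^*$ with terminal value $\lambda_N^* = \nabla_{x_N}\phi$; substituting these $\lambda_k^*$ back into $\nabla_{\boldsymbol{z}} \mathcal{L}$ and telescoping the state-gradient sums verifies that each block vanishes. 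Either route reduces to the same componentwise expansion of $\nabla_{\boldsymbol{z}} \mathcal{L}$, whose routine verification I would defer to the detailed appendix computation.
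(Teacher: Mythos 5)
Your proposal is correct, and its primary route is genuinely different from the paper's. The paper proves the proposition constructively: it writes out each block of $\nabla_{\boldsymbol{z}}\mathcal{L}$ explicitly, notes that the $\boldsymbol{\lambda}$-blocks vanish by feasibility of $\boldsymbol{\tau}^*$, and then \emph{defines} the multipliers by the backward adjoint recursion $\lambda_N^* = \phi_x$, $\lambda_k^* = \ell_{x_k} + f_{x_k}^\top \lambda_{k+1}^*$, so that the $x_k$-blocks vanish by construction and only the control stationarity $0 = \ell_{u_k} + f_{u_k}^\top\lambda_{k+1}^*$ remains to be justified by optimality. You instead invoke the abstract Lagrange multiplier theorem for an equality-constrained NLP and discharge the existence question by verifying LICQ, observing that the state-Jacobian of the stacked constraints is block lower-bidiagonal with identity diagonal blocks and hence nonsingular at every feasible point. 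Your route buys a cleaner and more airtight existence argument --- the paper's proof never explains why the $u_k$-stationarity condition is actually satisfied by the recursively constructed multipliers, whereas LICQ plus the multiplier theorem delivers all blocks at once; your second, constructive sketch (eliminating states and recognizing the adjoint recursion in $\nabla_{u_k}J = 0$) is precisely the missing step that would complete the paper's version. What the paper's route buys in exchange is the explicit form of $\boldsymbol{\lambda}^*$ as a backward costate recursion, which is reused downstream in the DDP connection and in the derivation of the differentiable optimal control algorithm. One minor point: your constraint convention $g = x_{k+1} - f(x_k,u_k,\theta)$ is the negative of the one implicit in \cref{eq:lagrangian}, so the multipliers you obtain differ by a sign from the paper's; this is immaterial but worth a remark when identifying the two Lagrangians.
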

\begin{proof}
Taking the gradient of the Lagrangian \cref{eq:lagrangian} with respect to each argument yields the following set of equations:
\begin{align*}
    \nabla_{\lambda_0} \mathcal{L} &= \xi(\theta) - x_0^* , \\
    \nabla_{\lambda_{k + 1}} \mathcal{L} &= f(x_k^*, u_k^*, \theta) - x_{k + 1}^* , \\
    \nabla_{x_k} \mathcal{L} &= \ell_{x_k} + f_{x_k}^\top \lambda_{k + 1} - \lambda_k , \\
    \nabla_{u_k} \mathcal{L} &= \ell_{u_k} + f_{u_k}^\top \lambda_{k + 1} , \\
    \nabla_{x_N} \mathcal{L} &= \phi_{x} - \lambda_N ,
\end{align*}
where functions are evaluated at the points $x_k^*, u_k^*, \theta$ along the solution $\boldsymbol{\tau}^*$ and the abbreviated notation $\ell_{x_k}$, $f_{u_k}$, etc. is used for gradients and partial derivatives. The first two equations are equal to zero because a feasible solution satisfies the initial condition and dynamics constraints of \cref{prob:parameterized_oc}. The remaining equations, when set equal to zero, yield a set of backward equations defining the optimal Lagrange multipliers~$\boldsymbol{\lambda}^*$:
\begin{align*}
    \lambda_k^* &= \ell_{x_k} + f_{x_k}^\top \lambda_{k + 1}^* , \quad \lambda_N^* = \phi_{x}, \\
    0 &= \ell_{u_k} + f_{u_k}^\top \lambda_{k + 1}^*. 
\end{align*}
These facts together imply the statement of the proposition. These conditions $\nabla_{\boldsymbol{z}} \mathcal{L}(\boldsymbol{z}^*, \theta) = 0$ are known as the \ac{KKT} conditions, and are equivalent to the discrete-time Pontryagin maximum/minimum principle (PMP) (see, e.g., Appendix C of \citet{jin2020pontryagin}).
\end{proof}

\section{Proof of Proposition 3}
\label{appendix:implicit_derivative}

\begin{proposition}[Implicit derivative of \cref{prob:parameterized_oc}]
Let $\boldsymbol{\tau}^*$ be a solution to \cref{prob:parameterized_oc} for fixed parameters~$\theta$. By \cref{proposition:optimality_conds}, \cref{thm:ift} holds with $F = \nabla_{\boldsymbol{z}} \mathcal{L}$. Furthermore, the Jacobian $\pdv{\boldsymbol{z}^*}{\theta}$ is given as
\begin{align*}
\pdv{}{\theta}\boldsymbol{z}^*(\theta) = - \mathcal{L}_{\boldsymbol{z}\boldsymbol{z}}^{-1} \mathcal{L}_{\boldsymbol{z}\theta} .
\end{align*}
\end{proposition}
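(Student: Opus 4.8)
The plan is to recognize that this proposition is a direct specialization of the implicit function theorem (\cref{thm:ift}) to the stationarity map of the Lagrangian. First I would set $F(\boldsymbol{z}, \theta) \defeq \nabla_{\boldsymbol{z}} \mathcal{L}(\boldsymbol{z}, \theta)$, a map from $\mathbb{R}^{n_z} \times \mathbb{R}^{n_\theta}$ into $\mathbb{R}^{n_z}$ with $n_z = \dim \boldsymbol{z}$. Since $\ell$, $\phi$, $f$, and $\xi$ are assumed (at least) twice continuously differentiable, the Lagrangian $\mathcal{L}$ defined in \cref{proposition:optimality_conds} is $C^2$, so $F = \nabla_{\boldsymbol{z}} \mathcal{L}$ is continuously differentiable and satisfies the smoothness hypothesis of \cref{thm:ift}.

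Next I would invoke \cref{proposition:optimality_conds}: at a solution $\boldsymbol{\tau}^*$ with its associated multipliers $\boldsymbol{\lambda}^*$, the augmented vector $\boldsymbol{z}^*$ satisfies $F(\boldsymbol{z}^*, \theta) = \nabla_{\boldsymbol{z}} \mathcal{L}(\boldsymbol{z}^*, \theta) = 0$. This is precisely the base-point condition $F(z_0, \theta_0) = 0$ required by \cref{thm:ift}, instantiated at $(z_0, \theta_0) = (\boldsymbol{z}^*, \theta)$. I would then identify the two partial-derivative blocks that appear in the IFT formula: differentiating $F = \nabla_{\boldsymbol{z}} \mathcal{L}$ once more in $\boldsymbol{z}$ gives the Lagrangian Hessian $\pdv{F}{\boldsymbol{z}} = \mathcal{L}_{\boldsymbol{z}\boldsymbol{z}}$, while differentiating in $\theta$ gives the mixed block $\pdv{F}{\theta} = \mathcal{L}_{\boldsymbol{z}\theta}$. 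Substituting these into the conclusion of \cref{thm:ift} yields $\pdv{\boldsymbol{z}^*}{\theta} = -\left(\pdv{F}{\boldsymbol{z}}\right)^{-1}\pdv{F}{\theta} = -\mathcal{L}_{\boldsymbol{z}\boldsymbol{z}}^{-1}\mathcal{L}_{\boldsymbol{z}\theta}$, which is the claimed identity.

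The step requiring the most care — and the only real content beyond this bookkeeping — is verifying the invertibility hypothesis of \cref{thm:ift}, namely that $\mathcal{L}_{\boldsymbol{z}\boldsymbol{z}}$ is nonsingular at $(\boldsymbol{z}^*, \theta)$. As the remark following \cref{thm:ift} observes, this is the control-problem analogue of requiring the Hessian of the reduced objective to be invertible, and it is exactly where the second-order (sufficient) optimality conditions enter. Concretely, nonsingularity of $\mathcal{L}_{\boldsymbol{z}\boldsymbol{z}}$ is tied to the well-posedness of the backward Riccati recursion used to solve \cref{eq:ddp_newton_step}, i.e., to the nonsingularity (in practice, positive definiteness) of the stagewise blocks $Q_{uu}^{(k)}$. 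Since the proposition asserts its conclusion under the premise that \cref{thm:ift} holds with $F = \nabla_{\boldsymbol{z}} \mathcal{L}$, I would treat this invertibility as the governing assumption, record that it is precisely what makes the implicit derivative well defined, and note that where it fails the Jacobian $\pdv{\boldsymbol{z}^*}{\theta}$ is not given by \cref{eq:implicit_derivative}.
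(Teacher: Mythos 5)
Your proposal is correct and follows essentially the same route as the paper's proof: apply \cref{thm:ift} to $F = \nabla_{\boldsymbol{z}}\mathcal{L}$, use \cref{proposition:optimality_conds} for the base-point condition, identify the Hessian blocks, and assume nonsingularity of $\mathcal{L}_{\boldsymbol{z}\boldsymbol{z}}$ (the paper notes this can be enforced via Levenberg--Marquardt regularization, whereas you relate it to the stagewise $Q_{uu}$ blocks, but both are side remarks on the same assumption).
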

\begin{proof}
Let \cref{proposition:optimality_conds} hold for $\boldsymbol{z}^* = (\boldsymbol{\tau}^*, \boldsymbol{\lambda}^*)$. Furthermore, assume the Hessian $\mathcal{L}_{\boldsymbol{z}\boldsymbol{z}}$ evaluated at $\boldsymbol{z}^*$ is nonsingular, which can be ensured through, e.g., Levenberg–Marquardt regularization~\citep{todorov2005generalized}. Since $\nabla_{\boldsymbol{z}} \mathcal{L}(\boldsymbol{z}^*, \theta) = 0$, the gradient $\nabla_{\boldsymbol{z}} \mathcal{L}$ satisfies the conditions of \cref{thm:ift}, and the statement of the proposition follows naturally.
\end{proof}

\section{Proof of Corollary 4}
\label{appendix:pdp}

\begin{corollary}[Pontryagin differentiable programming~\citep{jin2020pontryagin}]
Let the conditions of \cref{proposition:optimality_conds} and \cref{proposition:implicit_derivative} hold. Then, the differentiable Pontryagin conditions ((Eq. (13) of \citet{jin2020pontryagin}) are equivalent to solving the linear system \cref{eq:implicit_derivative}.
\end{corollary}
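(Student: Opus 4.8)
The plan is to start from the linear system $\mathcal{L}_{\boldsymbol{z}\boldsymbol{z}} \pdv{\boldsymbol{z}^*}{\theta} = -\mathcal{L}_{\boldsymbol{z}\theta}$, which is \cref{eq:implicit_derivative} after left-multiplying by $\mathcal{L}_{\boldsymbol{z}\boldsymbol{z}}$, and to recognize it as nothing more than the total derivative with respect to $\theta$ of the stationarity condition $\nabla_{\boldsymbol{z}} \mathcal{L}(\boldsymbol{z}^*(\theta), \theta) = 0$ established in \cref{proposition:optimality_conds}. Indeed, applying the chain rule to this identity gives $\mathcal{L}_{\boldsymbol{z}\boldsymbol{z}} \pdv{\boldsymbol{z}^*}{\theta} + \mathcal{L}_{\boldsymbol{z}\theta} = 0$. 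The content of the corollary is therefore purely that, once the block structure of the Hessian $\mathcal{L}_{\boldsymbol{z}\boldsymbol{z}}$ and the mixed-partial matrix $\mathcal{L}_{\boldsymbol{z}\theta}$ is written out explicitly against the ordering $\boldsymbol{z} = (\lambda_0, x_0, u_0, \ldots, \lambda_N, x_N)$, the resulting block-row equations reproduce Eq.~(13) of \citet{jin2020pontryagin} term by term.

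First I would reuse the component-wise gradient expressions for $\nabla_{\boldsymbol{z}} \mathcal{L}$ already derived in \cref{appendix:optimality_conditions}, namely those for $\nabla_{\lambda_0}\mathcal{L}$, $\nabla_{\lambda_{k+1}}\mathcal{L}$, $\nabla_{x_k}\mathcal{L}$, $\nabla_{u_k}\mathcal{L}$, and $\nabla_{x_N}\mathcal{L}$. Differentiating each totally with respect to $\theta$ and collecting terms yields, block by block: (i) the initial-condition sensitivity $\pdv{x_0}{\theta} = \pdv{\xi}{\theta}$ from the $\lambda_0$ row; (ii) the linearized forward recursion $\pdv{x_{k+1}}{\theta} = f_{x_k}\pdv{x_k}{\theta} + f_{u_k}\pdv{u_k}{\theta} + f_{\theta_k}$ from the $\lambda_{k+1}$ rows; (iii) a backward costate-sensitivity recursion from the $x_k$ rows, carrying the Hessian blocks $\ell_{x_k x_k}$, $\ell_{x_k u_k}$, the mixed partial $\ell_{x_k\theta}$, the second-order contraction $\pdv{}{x_k}(f_{x_k}^\top\lambda_{k+1}^*)$, and the term $f_{x_k}^\top\pdv{\lambda_{k+1}}{\theta}$; (iv) the control-stationarity sensitivity from the $u_k$ rows; and (v) the terminal condition $\pdv{\lambda_N}{\theta} = \phi_{xx}\pdv{x_N}{\theta} + \phi_{x\theta}$ from the $x_N$ row. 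These five families are precisely the equations defining the auxiliary (differential) control system of PDP.

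The remaining step is bookkeeping: I would match each family against the corresponding rows of Eq.~(13) of \citet{jin2020pontryagin}, accounting for notational differences between our Lagrangian-derived formulation and their PMP-based one. The cleanest way to organize this is to observe that the $(x_k,u_k)$-Hessian blocks of $\mathcal{L}$ coincide with the Hamiltonian Hessian blocks $H_{xx}$, $H_{xu}$, $H_{uu}$ used in PDP, since the PDP Hamiltonian $H = \ell + \lambda^\top f$ differs from the stagewise summand of $\mathcal{L}$ only by terms that do not contribute to these second derivatives. Having shown the two systems share identical unknowns $\bigl(\pdv{x_k}{\theta}, \pdv{u_k}{\theta}, \pdv{\lambda_k}{\theta}\bigr)$ and identical coefficients, the invertibility of $\mathcal{L}_{\boldsymbol{z}\boldsymbol{z}}$ assumed in \cref{proposition:implicit_derivative} guarantees a unique solution to both, which establishes the claimed equivalence.

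I expect the main obstacle to be the careful treatment of the second-order tensor terms appearing in steps (iii) and (iv), i.e.\ the derivatives of $f_{x_k}^\top\lambda_{k+1}^*$ and $f_{u_k}^\top\lambda_{k+1}^*$ with respect to $x_k$, $u_k$, and $\theta$. These contract the optimal costate $\lambda_{k+1}^*$ against the second derivatives of $f$, and it is exactly these contractions that must be shown to coincide with the aggregated coefficient matrices of PDP; the rest of the argument is a direct, if tedious, expansion and pattern-matching.
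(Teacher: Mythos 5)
Your proposal is correct and takes essentially the same route as the paper: differentiating each component of $\nabla_{\boldsymbol{z}}\mathcal{L}(\boldsymbol{z}^*(\theta),\theta)=0$ totally with respect to $\theta$ is exactly the same computation as the paper's explicit expansion of the block product $\mathcal{L}_{\boldsymbol{z}\boldsymbol{z}}\pdv{\boldsymbol{z}^*}{\theta}=-\mathcal{L}_{\boldsymbol{z}\theta}$, and both yield the same five families of block-row equations matched against Eq.~(13) of \citet{jin2020pontryagin}. Your observation that the second-order contractions $\lambda_{k+1}^\top f_{xx}$, etc., are absorbed into the Lagrangian Hessian blocks $\mathcal{L}_{xx}^{(k)}$, $\mathcal{L}_{ux}^{(k)}$, $\mathcal{L}_{uu}^{(k)}$ (which coincide with the PDP Hamiltonian Hessians) is precisely the notational identification the paper relies on.
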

\begin{proof}
First, let us state the differentiable Pontryagin conditions~\citep{jin2020pontryagin} in the notation of this paper:
\begin{equation}\label{eq:pdp}
\begin{aligned}
    \pdv{x_0}{\theta} &= \xi_\theta , \\
    \pdv{x_{k + 1}}{\theta} &= f_{x_k} \pdv{x_k}{\theta} + f_{u_k} \pdv{u_k}{\theta} + f_{\theta_k}, \\
    \pdv{\lambda_k}{\theta} &= \mathcal{L}_{xx}^{(k)} \pdv{x_k}{\theta} + \mathcal{L}_{xu}^{(k)} \pdv{u_k}{\theta} + f_{x_k}^\top \pdv{\lambda_{k + 1}}{\theta} + \mathcal{L}_{x\theta}^{(k)}, \\
    0 &= \mathcal{L}_{ux}^{(k)} \pdv{x_k}{\theta} + \mathcal{L}_{uu}^{(k)} \pdv{u_k}{\theta} + f_{u_k}^\top \pdv{\lambda_{k + 1}}{\theta} + \mathcal{L}_{u\theta}^{(k)}, \\
    \pdv{\lambda_N}{\theta} &= \phi_{xx} \pdv{x_N}{\theta} + \phi_{x\theta} ,
\end{aligned}
\end{equation}
where all derivatives are evaluated along the solution~$\boldsymbol{z}^*$. The notation $\mathcal{L}_{ux}^{(k)}$ denotes the block of the Hessian of the Lagrangian \cref{eq:lagrangian} corresponding to $u_k^*, x_k^*$, see below. Let us write explicitly the form of the Jacobian $\pdv{\boldsymbol{z}^*}{\theta} = (\ldots, \pdv{\lambda_k}{\theta}, \pdv{x_k}{\theta}, \pdv{u_k}{\theta}, \pdv{\lambda_{k + 1}}{\theta}, \ldots, \pdv{\lambda_N}{\theta}, \pdv{x_N}{\theta})$, the Hessian of the Lagrangian
\setlength{\arraycolsep}{2pt}
\begin{equation}\label{eq:L_zz}
    \mathcal{L}_{\boldsymbol{z}\boldsymbol{z}} = \begin{bNiceMatrix}[margin,first-row,last-col]
         \textcolor{red}{\lambda_k} & \textcolor{red}{x_k} & \textcolor{red}{u_k} & \textcolor{red}{\lambda_{k + 1}} & & \textcolor{red}{x_N} \\
         \ddots & -I & & & & & \textcolor{red}{\lambda_k} \\
         -I & \mathcal{L}_{xx}^{(k)} & \mathcal{L}_{xu}^{(k)} & f_{x_k}^\top & & & \textcolor{red}{x_k} \\
         & \mathcal{L}_{ux}^{(k)} & \mathcal{L}_{uu}^{(k)} & f_{u_k}^\top & & & \textcolor{red}{u_k} \\
         & f_{x_k} & f_{u_k} & & & & \textcolor{red}{\lambda_{k + 1}} \\
         & & & & \ddots & -I & \\
         & & & & -I & \phi_{xx} & \textcolor{red}{x_N} 
    \end{bNiceMatrix},
\end{equation}
and the Hessian $\mathcal{L}_{\boldsymbol{z}\theta} = (\xi_\theta, \ldots \mathcal{L}_{x\theta}^{(k)}, \mathcal{L}_{u\theta}^{(k)}, f_{\theta_k}, \ldots, \phi_{x\theta})$. Expanding the matrix multiplication $\mathcal{L}_{\boldsymbol{z}\boldsymbol{z}} \pdv{\boldsymbol{z}^*}{\theta} = - \mathcal{L}_{\boldsymbol{z}\theta}$ yields
\begin{equation*}
\begin{split}
-\pdv{x_0}{\theta} &= -\xi_\theta ,\\
-\pdv{\lambda_k}{\theta} + \mathcal{L}_{xx}^{(k)} \pdv{x_k}{\theta} + \mathcal{L}_{xu}^{(k)} \pdv{u_k}{\theta} + f_{x_k}^\top \pdv{\lambda_{k + 1}}{\theta} &= -\mathcal{L}_{x\theta}^{(k)}, \\
\mathcal{L}_{ux}^{(k)} \pdv{x_k}{\theta} + \mathcal{L}_{uu}^{(k)} \pdv{u_k}{\theta} + f_{u_k}^\top \pdv{\lambda_{k + 1}}{\theta} &= -\mathcal{L}_{u\theta}^{(k)}, \\
f_{x_k} \pdv{x_k}{\theta} + f_{u_k} \pdv{u_k}{\theta} - \pdv{x_{k + 1}}{\theta} &= -f_{\theta_k}, \\
-\pdv{\lambda_N}{\theta} + \phi_{xx} \pdv{x_N}{\theta} &= -\phi_{x\theta} ,
\end{split}
\end{equation*}
which are equivalent to \cref{eq:pdp} above.
\end{proof}

\section{Proof of Theorem 5}
\label{appendix:doc}
\begin{theorem}[Differentiable Optimal Control]
Let $\boldsymbol{z}$ denote the augmented vector consisting of $\boldsymbol{\tau}$ and $\boldsymbol{\lambda}$. 
In addition, let the conditions of \cref{proposition:optimality_conds} and \cref{proposition:implicit_derivative} hold. Then, the gradient of the loss $L$ with respect to $\theta$ is given by
\begin{align*}
    \nabla_\theta L(\boldsymbol{z}^*(\theta)) = \mathcal{L}_{\theta \boldsymbol{z}} \delta \boldsymbol{z} ,
\end{align*}
where the vector $\delta \boldsymbol{z}$ is given by solving the linear system
\begin{align*}
    \mathcal{L}_{\boldsymbol{z}\boldsymbol{z}} \delta \boldsymbol{z} = -\nabla_{\boldsymbol{z}} L .
\end{align*}
\end{theorem}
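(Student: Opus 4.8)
The plan is to begin from the chain rule, substitute the implicit derivative supplied by \cref{proposition:implicit_derivative}, and then exploit the symmetry of the Lagrangian Hessian to reorder the matrix operations into a single linear solve. Since the loss $L$ depends on $\theta$ only through the solution $\boldsymbol{z}^*(\theta)$, the chain rule gives
\[
\nabla_\theta L(\boldsymbol{z}^*(\theta)) = \left(\pdv{\boldsymbol{z}^*}{\theta}\right)^\top \nabla_{\boldsymbol{z}} L .
\]
First I would substitute the expression $\pdv{\boldsymbol{z}^*}{\theta} = -\mathcal{L}_{\boldsymbol{z}\boldsymbol{z}}^{-1}\mathcal{L}_{\boldsymbol{z}\theta}$ furnished by \cref{proposition:implicit_derivative}, yielding, after taking the transpose,
\[
\nabla_\theta L = -\mathcal{L}_{\boldsymbol{z}\theta}^\top \, \mathcal{L}_{\boldsymbol{z}\boldsymbol{z}}^{-\top} \, \nabla_{\boldsymbol{z}} L .
\]

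Second, I would invoke two standard facts about second derivatives of the twice-continuously-differentiable Lagrangian: the diagonal block $\mathcal{L}_{\boldsymbol{z}\boldsymbol{z}}$ is symmetric, so $\mathcal{L}_{\boldsymbol{z}\boldsymbol{z}}^{-\top} = \mathcal{L}_{\boldsymbol{z}\boldsymbol{z}}^{-1}$, and the mixed partials satisfy $\mathcal{L}_{\boldsymbol{z}\theta}^\top = \mathcal{L}_{\theta\boldsymbol{z}}$ by equality of mixed second derivatives. These transform the expression into $\nabla_\theta L = -\mathcal{L}_{\theta\boldsymbol{z}}\mathcal{L}_{\boldsymbol{z}\boldsymbol{z}}^{-1}\nabla_{\boldsymbol{z}} L$. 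Third, I would introduce $\delta\boldsymbol{z} \defeq -\mathcal{L}_{\boldsymbol{z}\boldsymbol{z}}^{-1}\nabla_{\boldsymbol{z}} L$, which is exactly the solution of $\mathcal{L}_{\boldsymbol{z}\boldsymbol{z}}\delta\boldsymbol{z} = -\nabla_{\boldsymbol{z}} L$ (well-posed because $\mathcal{L}_{\boldsymbol{z}\boldsymbol{z}}$ is nonsingular under the standing assumptions of \cref{proposition:implicit_derivative}). Substituting then gives $\nabla_\theta L = \mathcal{L}_{\theta\boldsymbol{z}}\delta\boldsymbol{z}$, which is the claim.

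The key insight, and the only conceptually nontrivial step, is the reordering of the matrix operations: rather than first forming the full Jacobian $\pdv{\boldsymbol{z}^*}{\theta} = -\mathcal{L}_{\boldsymbol{z}\boldsymbol{z}}^{-1}\mathcal{L}_{\boldsymbol{z}\theta}$, which entails a linear solve against the entire matrix $\mathcal{L}_{\boldsymbol{z}\theta}$, associativity of matrix multiplication lets us apply the inverse Hessian to the single vector $\nabla_{\boldsymbol{z}} L$ first. This is precisely the vector--Jacobian-product trick of reverse-mode \ac{AD}, and it is what reduces the cost from propagating a matrix of intermediate Jacobians to solving one linear system of the \emph{same form} as the \ac{DDP} Newton step \cref{eq:ddp_newton_step}. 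I expect the main thing to watch is the careful bookkeeping of transposes together with the two symmetry appeals; everything else is direct substitution. I would close by remarking that the practical payoff comes from $\mathcal{L}_{\boldsymbol{z}\boldsymbol{z}}\delta\boldsymbol{z} = -\nabla_{\boldsymbol{z}} L$ inheriting the sparse, block-banded structure of \cref{eq:ddp_newton_step}, so it can be solved by the same Riccati-like backward/forward sweep at $O(N)$ cost, reusing the derivatives already assembled by the control solver.
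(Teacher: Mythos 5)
Your proposal is correct and follows essentially the same route as the paper's proof: chain rule, substitution of the implicit derivative from \cref{proposition:implicit_derivative}, and reassociation of the matrix products so that $\mathcal{L}_{\boldsymbol{z}\boldsymbol{z}}^{-1}$ is applied to the single vector $\nabla_{\boldsymbol{z}} L$ rather than to $\mathcal{L}_{\boldsymbol{z}\theta}$. The only difference is that you spell out the symmetry of $\mathcal{L}_{\boldsymbol{z}\boldsymbol{z}}$ and the equality of mixed partials explicitly, which the paper leaves implicit when writing $-\mathcal{L}_{\theta\boldsymbol{z}}\mathcal{L}_{\boldsymbol{z}\boldsymbol{z}}^{-1}\nabla_{\boldsymbol{z}} L$ directly.
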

\begin{proof}
Start by stating the expression for the gradient $\nabla_\theta L$ and applying \cref{proposition:implicit_derivative}:
\begin{align*}
    \nabla_\theta L(\boldsymbol{z}^*(\theta)) &= \pdv{\boldsymbol{z}^*(\theta)}{\theta}^\top \nabla_{\boldsymbol{z}} L \\
    &= -\mathcal{L}_{\theta\boldsymbol{z}} \mathcal{L}_{\boldsymbol{z}\boldsymbol{z}}^{-1} \nabla_{\boldsymbol{z}} L.
\end{align*}
The last line shows this gradient can be computed either by solving $-\mathcal{L}_{\boldsymbol{z}\boldsymbol{z}}^{-1} \mathcal{L}_{\boldsymbol{z}\theta}$ (\cref{corollary:pdp}) or $-\mathcal{L}_{\boldsymbol{z}\boldsymbol{z}}^{-1} \nabla_{\boldsymbol{z}} L$. We adopt here the second choice --- call the solution $\delta \boldsymbol{z} = -\mathcal{L}_{\boldsymbol{z}\boldsymbol{z}}^{-1} \nabla_{\boldsymbol{z}} L$, equivalently $\mathcal{L}_{\boldsymbol{z}\boldsymbol{z}} \delta\boldsymbol{z} = -\nabla_{\boldsymbol{z}} L$. Substituting $\delta\boldsymbol{z}$ back into the expression for the gradient completes the proof. 
\end{proof}

\section{Derivation of Algorithm 1}
\label{appendix:doc_algorithm}

The derivation of \cref{alg:doc} is very similar to the derivation of DDP~\citep{mayne1966second,jacobson1970differential} --- in fact, it is the same, the only difference being the gradient that is considered. The resultant algorithm consists of a backward pass and forward pass procedure summarized by the equations in \cref{alg:doc_bpass} and \cref{alg:doc_fpass}, with derivation below. Diacritic tildes (e.g., $\widetilde{Q}_x^{(k)}$) are used to emphasize quantities that are unique from the usual DDP equations.

\begin{algorithm}
\caption{DOC Backward Pass}\label{alg:doc_bpass}
\KwIn{Derivatives of $\mathcal{L}$ (equivalently $f$, $\ell$, $\phi$, and $\xi$) and $L$ along the solution $\boldsymbol{z}^*$}
\KwOut{Derivatives and gains $\widetilde{\boldsymbol{V}}_x, \boldsymbol{V}_{xx}, \widetilde{\boldsymbol{k}}, \boldsymbol{K}$}

$\widetilde{V}_x^{(N)} \gets \nabla_{x_N} L$; \quad $V_{xx}^{(N)} \gets \phi_{xx}$;

\For{$k = N - 1, \ldots, 0$}{
    \tcp{$Q$ derivatives} 
    $\widetilde{Q}_x^{(k)} \gets \nabla_{x_k} L + f_{x_k}^\top \widetilde{V}_x^{(k + 1)}$; \\
    $\widetilde{Q}_u^{(k)} \gets \nabla_{u_k} L + f_{u_k}^\top \widetilde{V}_x^{(k + 1)}$; \\
    $Q_{xx}^{(k)} \gets \mathcal{L}_{xx}^{(k)} + f_{x_k}^\top V_{xx}^{(k + 1)} f_{x_k}$; \\
    $Q_{ux}^{(k)} \gets \mathcal{L}_{ux}^{(k)} + f_{u_k}^\top V_{xx}^{(k + 1)} f_{x_k} = (Q_{xu}^{(k)})^\top$; \\
    $Q_{uu}^{(k)} \gets \mathcal{L}_{uu}^{(k)} + f_{u_k}^\top V_{xx}^{(k + 1)} f_{u_k}$; \\
    \tcp{Control gains} 
    $\widetilde{k}^{(k)} \gets -\big( Q_{uu}^{(k)} \big)^{-1} \widetilde{Q}_u^{(k)}$; \\
    $K^{(k)} \gets -\big( Q_{uu}^{(k)} \big)^{-1} Q_{ux}^{(k)}$; \\
    \tcp{$V$ derivatives} 
    $\widetilde{V}_{x}^{(k)} \gets \widetilde{Q}_x^{(k)} + Q_{xu}^{(k)} \widetilde{k}^{(k)}$; \\
    $V_{xx}^{(k)} \gets Q_{xx}^{(k)} + Q_{xu}^{(k)} K^{(k)}$;
}
\end{algorithm}

\begin{algorithm}
\caption{DOC Forward Pass}\label{alg:doc_fpass}
\KwIn{Outputs from \cref{alg:doc_bpass} and parameter derivatives $\xi_\theta$, $\boldsymbol{f}_{\theta}$, $\boldsymbol{\mathcal{L}}_{\theta x}$, $\boldsymbol{\mathcal{L}}_{\theta u}$, $\phi_{\theta x}$}
\KwOut{Gradient of upper-level loss $\nabla_\theta L$}

$\delta{x_0} \gets 0$;

$\delta{\lambda_0} \gets \widetilde{V}_x^{(0)}$;

$\nabla_\theta L \gets \xi_\theta^\top \delta{\lambda_0}$;

\For{$k = 0, \ldots, N - 1$}{

    $\delta{u_k} \gets \widetilde{k}^{(k)} + K^{(k)} \delta{x_k}$;
    
    $\delta{x_{k + 1}} \gets f_{x_k} \delta{x_k} + f_{u_k} \delta{u_k}$;
    
    $\delta{\lambda_{k + 1}} \gets \widetilde{V}_x^{(k + 1)} + V_{xx}^{(k + 1)} \delta{x_{k + 1}}$;\\
    \tcp{Accumulate gradient}
    $\nabla_\theta L \gets \nabla_\theta L + \mathcal{L}_{\theta x}^{(k)} \delta{x_k} + \mathcal{L}_{\theta u}^{(k)} \delta{u_k} + f_{\theta_k}^\top \delta{\lambda_{k + 1}}$; 
}
$\nabla_\theta L \gets \nabla_\theta L + \phi_{\theta x} \delta{x_N}$;
\end{algorithm}

The goal will be to derive an efficient algorithm for solving
\begin{align*}
    \mathcal{L}_{\boldsymbol{z}\boldsymbol{z}} \delta \boldsymbol{z} = -\nabla_{\boldsymbol{z}} L ,
\end{align*}
as given in \cref{thm:doc}.

Using the block form of $\mathcal{L}_{\boldsymbol{z}\boldsymbol{z}}$ given in \cref{eq:L_zz} and noting that $\nabla_{\boldsymbol{z}} L = (0, \ldots, \nabla_{x_k} L, \nabla_{u_k} L, \ldots, \nabla_{x_N} L)$, expanding the matrix multiplication above yields the following system of equations:
\begin{subequations}
\begin{align}
    \delta x_0 &= 0 ,\\
    \delta{\lambda_k} &= \nabla_{x_k} L + \mathcal{L}_{xx}^{(k)} \delta{x_k} + \mathcal{L}_{xu}^{(k)} \delta{u_k} + f_{x_k}^\top \delta{\lambda_{k + 1}}, \label{eq:dx_t} \\
    0 &= \nabla_{u_k} L + \mathcal{L}_{ux}^{(k)} \delta{x_k} + \mathcal{L}_{uu}^{(k)} \delta{u_k} + f_{u_k}^\top \delta{\lambda_{k + 1}}, \label{eq:du_t} \\
    \delta{x_{k + 1}} &= f_{x_k} \delta{x_k} + f_{u_k} \delta{u_k}, \label{eq:lin_dyn} \\
    \delta{\lambda_N} &= \phi_{xx} \delta{x_N} + \nabla_{x_N} L .
\end{align}
\end{subequations}

Through an inductive argument, we will show $\delta{\lambda_k}$ has the form
\begin{equation}
    \delta{\lambda_k} = \widetilde{V}_x^{(k)} + V_{xx}^{(k)} \delta{x_k} .\label{eq:dlambda_t}
\end{equation}

\cref{eq:dlambda_t} holds at time $N$ by taking $\widetilde{V}_x^{(N)} = \nabla_{x_N} L$ and $V_{xx}^{(N)} = \phi_{xx}$. Next, assume \cref{eq:dlambda_t} holds at time $k + 1$, namely
\begin{align*}
    \delta{\lambda_{k + 1}} &= \widetilde{V}_x^{(k + 1)} + V_{xx}^{(k + 1)} \delta{x_{k + 1}} \\
    &= \widetilde{V}_x^{(k + 1)} + V_{xx}^{(k + 1)} f_{x_k} \delta{x_k} + V_{xx}^{(k + 1)} f_{u_k} \delta{u_k},
\end{align*}
where we have substituted in the linear dynamics \cref{eq:lin_dyn}. Substituting into \cref{eq:du_t} yields
\begin{equation}\label{eq:du_t_opt}
\delta{u_k} = \tilde{k}^{(k)} + K^{(k)} \delta{x_k}  ,
\end{equation}
where
\begin{align*}
    \tilde{k}^{(k)} &\defeq -\big(Q_{uu}^{(k)}\big)^{-1} \widetilde{Q}_{u}^{(k)},\\
    K^{(k)} &\defeq - \big(Q_{uu}^{(k)}\big)^{-1} Q_{ux}^{(k)}, \\
    \widetilde{Q}_{u}^{(k)} &\defeq \nabla_{u_k}L + f_{u_k}^\top \widetilde{V}_{x}^{(k + 1)}, \\
    Q_{uu}^{(k)} &\defeq \mathcal{L}_{uu}^{(k)} + f_{u_k}^\top V_{xx}^{(k + 1)} f_{u_k}, \\
    Q_{ux}^{(k)} &\defeq \mathcal{L}_{ux}^{(k)} + f_{u_k}^\top V_{xx}^{(k + 1)} f_{x_k} .
\end{align*}

Finally, substituting $\delta{u_k}$ and $\delta{\lambda_{k + 1}}$ into \cref{eq:dx_t} yields
\begin{equation*}
  \delta{\lambda_k} = \widetilde{V}_x^{(k)} + V_{xx}^{(k)} \delta{x_k},
\end{equation*}
with
\begin{align*}
  \widetilde{V}_x^{(k)} &\defeq \widetilde{Q}_{x}^{(k)} + Q_{xu}^{(k)} \widetilde{k}^{(k)}, \\
  V_{xx}^{(k)} &\defeq Q_{xx}^{(k)} + Q_{xu}^{(k)} K^{(k)}, \\
  \widetilde{Q}_{x}^{(k)} & \defeq \nabla_{x_k}L + f_{x_k}^\top \widetilde{V}_{x}^{(k + 1)}, \\
  Q_{xx}^{(k)} &\defeq \mathcal{L}_{xx}^{(k)} + f_{x_k}^\top V_{xx}^{(k + 1)} f_{x_k}, \\
  Q_{xu}^{(k)} &\defeq \mathcal{L}_{xu}^{(k)} + f_{x_k}^\top V_{xx}^{(k + 1)} f_{u_k} = (Q_{ux}^{(k)})^\top ,
\end{align*}
which proves \cref{eq:dlambda_t} holds at time $k$. By induction, it holds for all $k = N, N - 1, \ldots, 0$. This motivates the backward-forward nature of \cref{alg:doc}, as the $Q$ and $V$ derivatives can be computed backwards-in-time, then the solution $\delta \boldsymbol{z}$ is given by forward application of \cref{eq:du_t_opt}, \cref{eq:lin_dyn}, and \cref{eq:dlambda_t}.

Finally, the computation of the gradient $\nabla_\theta L$ is given as
\begin{equation*}
    \nabla_\theta L = \mathcal{L}_{\theta \boldsymbol{z}} \delta \boldsymbol{z} .
\end{equation*}
Multiplication of $\mathcal{L}_{\theta \boldsymbol{z}}$ with $$\delta \boldsymbol{z} = (\delta{\lambda_0}, \ldots, \delta{x_k}, \delta{u_k}, \delta{\lambda_{k + 1}}, \ldots, \delta{x_N})$$ yields the sum
\begin{align*}
    \nabla_\theta L ={}& \xi_\theta^\top \delta{\lambda_0} + \ldots \\
    &+ \mathcal{L}_{\theta x}^{(k)} \delta{x_k} + \mathcal{L}_{\theta u}^{(k)} \delta{u_k} + f_{\theta_k}^\top \delta{\lambda_{k + 1}} + \ldots \\
    &+ \phi_{\theta x} \delta{x_N} ,
\end{align*}
which shows how the gradient can be accumulated during the forward pass of \cref{alg:doc}.

\section{Proof of Corollary 6}
\label{appendix:diff_mpc}
\begin{corollary}[Diff-MPC~\citep{amos2018differentiable}]
Diff-MPC is equivalent to using a Gauss-Newton approximation when solving \cref{eq:d_z}.
\end{corollary}
\begin{proof}
Diff-MPC was originally derived by \citet{amos2018differentiable} starting with iLQR and then implicitly differentiating with respect to the iLQR parameters $A_k$, $B_k$, $Q_k$, $R_k$, etc., using matrix calculus. We present an alternative derivation that generalizes~\citep{amos2018differentiable} to consider arbitrary dynamics and cost parameters through applying \cref{alg:doc} using a Gauss-Newton approximation of the Hessian --- this approximation is advantageous computationally as it only requires first-order dynamics derivatives. Furthermore, adopting the Gauss-Newton approximation is equivalent to the iLQR algorithm~\citep{giftthaler2018family}.

Adopting the notation of this work, the Diff-MPC equations are given by solving the system
$$K \delta \boldsymbol{z} = -\nabla_{\boldsymbol{z}} L,$$
with the KKT matrix $K$ given by
\setlength{\arraycolsep}{2pt}
\begin{equation*}\label{eq:K}
    K = \begin{bNiceMatrix}[margin,first-row,last-col]
         \textcolor{red}{\lambda_k} & \textcolor{red}{x_k} & \textcolor{red}{u_k} & \textcolor{red}{\lambda_{k + 1}} & & \textcolor{red}{x_N} \\
         \ddots & -I & & & & & \textcolor{red}{\lambda_k} \\
         -I & \ell_{xx}^{(k)} & \ell_{xu}^{(k)} & f_{x_k}^\top & & & \textcolor{red}{x_k} \\
         & \ell_{ux}^{(k)} & \ell_{uu}^{(k)} & f_{u_k}^\top & & & \textcolor{red}{u_k} \\
         & f_{x_k} & f_{u_k} & & & & \textcolor{red}{\lambda_{k + 1}} \\
         & & & & \ddots & -I & \\
         & & & & -I & \phi_{xx} & \textcolor{red}{x_N} 
    \end{bNiceMatrix},
\end{equation*}
This matrix should look familiar to the form of $\mathcal{L}_{\boldsymbol{z}\boldsymbol{z}}$ given in \cref{eq:L_zz}. Consider the second-order derivatives of the Lagrangian that are necessary when applying \cref{alg:doc} to invert $\mathcal{L}_{\boldsymbol{z}\boldsymbol{z}}$:
\begin{align*}
\mathcal{L}_{xx}^{(k)} &= \ell_{xx}^{(k)} + \lambda_{k + 1} \otimes f_{xx}^{(k)} , \\
\mathcal{L}_{ux}^{(k)} &= \ell_{ux}^{(k)} + \lambda_{k + 1} \otimes f_{ux}^{(k)} , \\
\mathcal{L}_{uu}^{(k)} &= \ell_{uu}^{(k)} + \lambda_{k + 1} \otimes f_{uu}^{(k)} , \\
\end{align*}
with the $\otimes$ operator representing tensor contraction. Dropping the final term of each equation (and thus avoiding their expensive computation) corresponds to a Gauss-Newton approximation of the Hessian, and we have that $\mathcal{L}_{\boldsymbol{z}\boldsymbol{z}} = K$. Note that, importantly, this approximation does not change the structure of the algorithm itself. This shows that Diff-MPC can be derived through our framework by applying a Gauss-Newton Hessian approximation in \cref{alg:doc}.
\end{proof}

\section{Theorem 7 --- Jacobian estimate error}
\label{appendix:jac_error}

We start by defining some necessary quantities for computing the Jacobian $\frac{\partial \boldsymbol{z}}{\partial \theta}$ using \cref{proposition:implicit_derivative}. To avoid excessive fraction notation, we abbreviate $\partial \boldsymbol{z} \defeq \frac{\partial \boldsymbol{z}}{\partial \theta}$ and similarly $\partial x_k \defeq \frac{\partial x_k}{\partial \theta}$ and $\partial u_k \defeq \frac{\partial u_k}{\partial \theta}$. As shown in \cref{corollary:pdp}, computing $\partial \boldsymbol{z}$ amounts to solving the system of equations \cref{eq:pdp}. This system has the solution:
\begin{align*}
\partial u_k &= - (Q_{uu}^{(k)})^{-1} Q_{u\theta}^{(k)} - (Q_{uu}^{(k)})^{-1} Q_{ux}^{(k)} \partial x_k , \\
\partial x_{k + 1} &= f_{x_k} \partial x_k + f_{u_k} \partial u_k + f_{\theta_k} ,
\end{align*}
with initial condition $\partial x_0 = \xi_{\theta}$. The matrix-valued functions $Q_{uu}^{(k)}$ and $Q_{ux}^{(k)}$ are defined the same as in the derivation of \cref{alg:doc} presented in \cref{appendix:doc_algorithm}. $Q_{u\theta}^{(k)}$ is defined by
\begin{align*}
    Q_{u\theta}^{(k)} &\defeq \mathcal{L}_{u\theta}^{(k)} + f_{u_k}^\top V_{x\theta}^{(k + 1)} + f_{u_k}^\top V_{xx}^{(k + 1)} f_{\theta_k}, \\
    Q_{x\theta}^{(k)} &\defeq \mathcal{L}_{x\theta}^{(k)} + f_{x_k}^\top V_{x\theta}^{(k + 1)} + f_{x_k}^\top V_{xx}^{(k + 1)} f_{\theta_k}, \\
    V_{x\theta}^{(k)} &\defeq Q_{x\theta}^{(k)} - Q_{xu}^{(k)} (Q_{uu}^{(k)})^{-1} Q_{u\theta}^{(k)} .
\end{align*}
where $V_{x\theta}^{(k)}$ is solved backwards-in-time (much like the value function gradient $V_x^{(k)}$ and Hessian $V_{xx}^{(k)}$) starting with $V_{x\theta}^{(N)} = \phi_{x\theta}$.

In the following results, we drop the dependence of $Q_{uu}$, $Q_{ux}$, etc. on time $k$ for notational compactness. The norm of a matrix $\normt{A}$ is the operator norm unless otherwise specified. Let $A: \mathbb{R}^n \to \mathbb{R}^{m \times p}$ be any matrix-valued function. $\beta$-boundedness of $A$ implies $\norm{A(x)} \leq \beta$. Likewise, $L$-Lipschitzness of $A$ implies $\norm{A(y) - A(x)} \leq L \norm{y - x}$.

Next, let us state formally the necessary assumptions of \cref{thm:jac_error}. We use Greek letters for boundedness constants and capital Roman letters for Lipschitz constants. All of the following mathematical objects are functions defined in a neighborhood of the optimal trajectory $\boldsymbol{z}^*$. 
\setcounter{theorem}{7}\begin{assumption}
\label{assumptions:jac_error}
The following conditions are assumed to hold for all $\boldsymbol{z}$ with $\norm{\boldsymbol{z} - \boldsymbol{z}^*} \leq \epsilon$:
\begin{enumerate}
    \item $Q_{uu}$ is $K$-Lipschitz in $(x_t, u_t)$ and well-conditioned:
    $\norm{Q_{uu} v}~\geq~\alpha \norm{v}$ for all $v \in \mathbb{R}^{n_u}$. This implies $Q_{uu}^{-1}$ exists and $\norm{Q_{uu}^{-1}} \leq \frac{1}{\alpha}$.
    \item $Q_{ux}$ is $L$-Lipschitz and $\beta$-bounded.
    \item $Q_{u\theta}$ is $M$-Lipschitz and $\gamma$-bounded.
    \item $f_x$ is $A$-Lipschitz and $\zeta$-bounded.
    \item $f_u$ is $B$-Lipschitz and $\eta$-bounded.
    \item $f_\theta$ is $N$-Lipschitz and $\sigma$-bounded.
    \item $\xi_\theta$ is $\rho$-bounded.
\end{enumerate}
\end{assumption}

Now we will present the main theorem, which shows that the Jacobian estimate errors are bounded as a function of the trajectory error to the optimal solution.
\begin{manualtheorem}{7}[Jacobian estimate error]
\label{thm:jac_error_full}
Let the conditions of \cref{assumptions:jac_error} hold. Then, the error in using \cref{proposition:implicit_derivative} to compute the implicit derivatives of the control problem is upper bounded by
\begin{align*}
    \norm{\partial \hat{u}_k - \partial u_k^*} &\leq \sum_{t = 0}^{k} C_{k,t} \norm{\hat{\tau}_t - \tau_t^*} ,\\ 
    \norm{\partial \hat{x}_{k + 1} - \partial x_{k + 1}^*} &\leq \sum_{t = 0}^{k} D_{k+1,t} \norm{\hat{\tau}_t - \tau_t^*} ,
\end{align*}
where $\tau_t = (x_t, u_t)$ is the state-control pair at time $t$. The constants $C_{k, t}, D_{k, t} > 0$ are specific to the time step $k$ that the Jacobian errors are evaluated at, and are time-varying for $t = 0, 1, \ldots, k$.
\end{manualtheorem}

Before proving the full theorem, we begin by proving the following result which shows the Jacobians $\partial u_k$ and $\partial x_k$ remain bounded for all time.
\begin{lemma}[Boundedness of Jacobians]
\label{lemma:bounded_jac}
Let the conditions of \cref{assumptions:jac_error} hold. For all time steps $k = 0, \ldots, N - 1$, the norm of the Jacobians $\partial u_k$ and $\partial x_k$ are bounded:
\begin{align*}
    \norm{\partial u_k} \leq \mu_k, \quad \norm{\partial x_{k + 1}} \leq \nu_{k + 1},
\end{align*}
for $\mu_k = (\gamma + \beta \nu_k)/\alpha$ and $\nu_{k + 1} = \zeta \nu_k + \eta \mu_k + \sigma$ with $\nu_0 = \rho$.
\end{lemma}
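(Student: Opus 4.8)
The plan is to prove both bounds simultaneously by a single forward induction on the time index $k$, exploiting the fact that the closed-form recursion for $\partial u_k$ and $\partial x_{k+1}$ given at the start of \cref{appendix:jac_error} is \emph{affine} in the Jacobians, and that all the relevant matrix-valued coefficients are uniformly bounded on the neighborhood of $\boldsymbol{z}^*$ where \cref{assumptions:jac_error} holds. This reduces the lemma to a routine propagation of operator-norm bounds.

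For the base case, I would observe that the recursion is initialized with $\partial x_0 = \xi_\theta$, so the $\rho$-boundedness of $\xi_\theta$ (condition~7 of \cref{assumptions:jac_error}) gives $\norm{\partial x_0} \leq \rho = \nu_0$ immediately, establishing the claim at $k = 0$.

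For the inductive step, suppose $\norm{\partial x_k} \leq \nu_k$. I would first bound $\partial u_k$ using its closed form $\partial u_k = -(Q_{uu}^{(k)})^{-1} Q_{u\theta}^{(k)} - (Q_{uu}^{(k)})^{-1} Q_{ux}^{(k)} \partial x_k$. Applying the triangle inequality and submultiplicativity of the operator norm, together with $\norm{(Q_{uu}^{(k)})^{-1}} \leq 1/\alpha$, $\norm{Q_{u\theta}^{(k)}} \leq \gamma$, and $\norm{Q_{ux}^{(k)}} \leq \beta$, yields $\norm{\partial u_k} \leq (\gamma + \beta \nu_k)/\alpha = \mu_k$. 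Substituting this into $\partial x_{k+1} = f_{x_k} \partial x_k + f_{u_k} \partial u_k + f_{\theta_k}$ and applying the same two inequalities with the boundedness constants $\zeta$, $\eta$, $\sigma$ for $f_x$, $f_u$, $f_\theta$ then gives $\norm{\partial x_{k+1}} \leq \zeta \nu_k + \eta \mu_k + \sigma = \nu_{k+1}$. These are precisely the stated recursions for $\mu_k$ and $\nu_{k+1}$, so the induction closes.

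I do not anticipate any genuine obstacle: the computation uses only the boundedness portions of \cref{assumptions:jac_error}, while the Lipschitz constants $K, L, M, A, B, N$ are reserved for the error-propagation argument in the full \cref{thm:jac_error_full}. The only points worth stating carefully are that the bounds are valid solely on the neighborhood $\norm{\boldsymbol{z} - \boldsymbol{z}^*} \leq \epsilon$, and that the well-conditioning assumption is what guarantees $(Q_{uu}^{(k)})^{-1}$ exists at every stage, so that the recursion — and hence the derived bounds — is well-defined for all $k = 0, \ldots, N-1$.
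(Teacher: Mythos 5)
Your proposal is correct and follows essentially the same argument as the paper: a forward induction anchored at $\norm{\partial x_0} = \normt{\xi_\theta} \leq \rho$, followed by the triangle inequality and submultiplicativity applied to the closed-form recursions for $\partial u_k$ and $\partial x_{k+1}$, using only the boundedness constants from \cref{assumptions:jac_error}. Your added remarks---that the Lipschitz constants are reserved for \cref{thm:jac_error_full} and that well-conditioning of $Q_{uu}$ is what makes the recursion well-defined---are accurate and consistent with the paper.
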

\begin{proof}
Starting with the base case $k = 0$, we have $\partial x_0 = \xi_\theta$ so $\normt{\partial x_0} = \normt{\xi_\theta} \leq \rho \eqdef \nu_0$. Next, for $k \geq 0$ assume that $\normt{\partial x_k} \leq \nu_k$. We have that $\normt{\partial u_k} \leq \normt{Q_{uu}^{-1}} \normt{Q_{u\theta}} + \normt{Q_{uu}^{-1}} \normt{Q_{ux}} \normt{\partial x_k} \leq (\gamma + \beta \nu_k)/\alpha \eqdef \mu_k$. Finally, $\normt{\partial x_{k + 1}} \leq \normt{f_x}\normt{\partial x_k} + \normt{f_u}\normt{\partial u_k} + \normt{f_\theta} \leq \zeta \nu_k + \eta \mu_k + \sigma \eqdef \nu_{k + 1}$ and the result holds by recursion.
\end{proof}

\begin{proof}[Proof of \cref{thm:jac_error_full}]
To start, let us consider the base case with $k = 0$. Note that $\partial \hat{x}_{0} - \partial x_{0}^* = \xi_{\theta} - \xi_{\theta} = 0$ since the initial condition function $\xi(\theta)$ has no dependence on $x_0$. Therefore, we have $\normt{\partial \hat{x}_{0} - \partial x_{0}^*} = 0 \leq D_{0, 0} \normt{\hat{\tau}_0 - \tau_0^*}$ for any $D_{0, 0} \geq 0$, so take $D_{0, 0} = 0$ trivially.

Next, we analyze the control Jacobian error at time $k \geq 0$. It is assumed that the state Jacobian error for time $k$ can be bounded in the form $\normt{\partial \hat{x}_k - \partial x_k^*} \leq \sum_{t = 0}^{k - 1} D_{k, t} \normt{\hat{\tau}_t - \tau_t^*}$ according to the inductive hypothesis.

For notational convenience ``hat'' quantities such as $\hat{Q}_{uu}$ are assumed to be evaluated at $(\hat{x}_k, \hat{u}_k)$ while ``non-hat'' quantities such as $Q_{ux}$ are evaluated instead at the optimal state-control pair $(x_k^*, u_k^*)$. The general form of $\partial \hat{u}_k - \partial u_k^*$ can therefore be written as
\begin{gather*}
\begin{aligned}[t]
   \partial \hat{u}_k - \partial u_k^* ={}& -\hat{Q}_{uu}^{-1} \hat{Q}_{u\theta} - \hat{Q}_{uu}^{-1} \hat{Q}_{ux} \partial \hat{x}_k \\
    & \quad + Q_{uu}^{-1} Q_{u\theta} + Q_{uu}^{-1} Q_{ux} \partial x_k^*
\end{aligned} \\
\begin{aligned}[t]
    ={}& \hat{Q}_{uu}^{-1} \left( (Q_{u\theta} - \hat{Q}_{u\theta}) + \hat{Q}_{ux} (\partial x_k^* - \partial \hat{x}_k) + (Q_{ux} - \hat{Q}_{ux}) \partial x_k^* \right) \\
    & \quad + (Q_{uu}^{-1} - \hat{Q}_{uu}^{-1}) (Q_{u\theta} + Q_{ux} \partial x_k^*) ,
\end{aligned}
\end{gather*}
where we have used repeatedly the fact that $AB - CD = A(B - D) + (A - C)D$. The norm of this approximation can be upper-bounded by
\begin{gather*}
\begin{aligned}[t]
   \normt{\partial \hat{u}_k - \partial u_k^*} \leq{}& \normt{\hat{Q}_{uu}^{-1}} \big( \normt{Q_{u\theta} - \hat{Q}_{u\theta}} + \normt{\hat{Q}_{ux}} \normt{\partial x_k^* - \partial \hat{x}_k} \\
   & \quad\quad\quad + \normt{Q_{ux} - \hat{Q}_{ux}} \normt{\partial x_k^*} \big) \\
    & \quad + \normt{Q_{uu}^{-1} - \hat{Q}_{uu}^{-1}} \big(\normt{Q_{u\theta}} + \normt{Q_{ux}} \normt{\partial x_k^*}\big)
\end{aligned}\\
\begin{aligned}[t]
    ={}& \normt{\hat{Q}_{uu}^{-1}} \big( \normt{Q_{u\theta} - \hat{Q}_{u\theta}} + \normt{\hat{Q}_{ux}} \normt{\partial x_k^* - \partial \hat{x}_k} \\
    & \quad\quad\quad + \normt{Q_{ux} - \hat{Q}_{ux}} \normt{\partial x_k^*} \big) \\
    & \quad + \normt{Q_{uu}^{-1}} \normt{\hat{Q}_{uu} - Q_{uu}} \normt{\hat{Q}_{uu}^{-1}} \big(\normt{Q_{u\theta}} + \normt{Q_{ux}} \normt{\partial x_k^*}\big) ,
\end{aligned} 
\end{gather*}
where we have used the fact that $\normt{A^{-1} - B^{-1}} = \normt{A^{-1}}\normt{B - A}\normt{B^{-1}}$. Using \cref{assumptions:jac_error} and \cref{lemma:bounded_jac}, this yields the following upper bound:
\begin{gather*}
\begin{aligned}[t]
   \normt{\partial \hat{u}_k - \partial u_k^*} \leq{}& \underbrace{(\frac{M}{\alpha} + \frac{\nu_k L}{\alpha} + \frac{K(\gamma + \beta \nu_k)}{\alpha^2})}_{\eqdef C_{k, k}} \normt{\hat{\tau}_k - \tau_k^*} \\
   & \quad + \frac{\beta}{\alpha} \normt{\partial \hat{x}_k - \partial x_k^*}
\end{aligned}\\
\leq C_{k,k} \normt{\hat{\tau}_k - \tau_k^*} + \sum_{t = 0}^{k - 1} \underbrace{\frac{\beta}{\alpha} D_{k, t}}_{\defeq C_{k, t}} \normt{\hat{\tau}_t - \tau_t^*} = \sum_{t = 0}^{k} C_{k,t} \norm{\hat{\tau}_t - \tau_t^*} .
\end{gather*}
This shows the control Jacobian error is on the same order as the error with the optimal trajectory.

Next, let us analyze the state Jacobian error at time $k \geq 0$. The general form of $\partial \hat{x}_{k + 1} - \partial x_{k + 1}^*$ can be expresed as
\begin{align*}
\partial \hat{x}_{k + 1} - \partial x_{k + 1}^* ={}& \hat{f}_x \partial \hat{x}_k + \hat{f}_u \partial \hat{u}_k + \hat{f}_\theta \\
& \quad - f_x \partial x_k^* - f_u \partial u_k^* - f_\theta,
\end{align*}
which by similar arguments is bounded in norm by
\begin{align*}
\normt{\partial \hat{x}_{k + 1} - \partial x_{k + 1}^*} \leq{}& \normt{\hat{f}_x} \normt{\partial \hat{x}_k - \partial x_k^*} + \normt{\hat{f}_x - f_x} \normt{\partial x_k^*} \\
& + \normt{\hat{f}_u} \normt{\partial \hat{u}_k - \partial u_k^*} + \normt{\hat{f}_u - f_u} \normt{\partial u_k^*} \\
& + \normt{\hat{f}_\theta - f_\theta}.
\end{align*}
Under \cref{assumptions:jac_error} and \cref{lemma:bounded_jac}, we therefore have
\begin{gather*}
\begin{aligned}[t]
\normt{\partial \hat{x}_{k + 1} - \partial x_{k + 1}^*} \leq{}& \zeta \normt{\partial \hat{x}_k - \partial x_k^*} + \nu_k A \normt{\hat{\tau}_k - \tau_k^*} \\
& + \eta \normt{\partial \hat{u}_k - \partial u_k^*} + \mu_k B \normt{\hat{\tau}_k - \tau_k^*} \\
& + N \normt{\hat{\tau}_k - \tau_k^*} 
\end{aligned}\\
\begin{aligned}[t]
    \leq{}& \zeta \sum_{t = 0}^{k - 1} D_{k, t} \normt{\hat{\tau}_t - \tau_t^*} + \eta \sum_{t = 0}^{k} C_{k, t} \normt{\hat{\tau}_t - \tau_t^*} \\
    & + (\nu_k A + \mu_k B + N) \normt{\hat{\tau}_k - \tau_k^*} 
\end{aligned}\\
\begin{aligned}[t]
    \leq{}& \sum_{t = 0}^{k - 1} \underbrace{(\zeta D_{k, t} + \eta C_{k, t})}_{\defeq D_{k + 1, t}} \normt{\hat{\tau}_t - \tau_t^*} \\
    & + \underbrace{(\nu_k A + \mu_k B + N + \eta C_{k, k})}_{D_{k + 1, k}} \normt{\hat{\tau}_k - \tau_k^*} 
\end{aligned}\\
\implies \normt{\partial \hat{x}_{k + 1} - \partial x_{k + 1}^*} \leq \sum_{t = 0}^{k} D_{k + 1, t} \normt{\hat{\tau}_t - \tau_t^*} .
\end{gather*}
This completes the proof of \cref{thm:jac_error_full}.
\end{proof}

\section{Incorporating Control Constraints}
\label{appendix:control_constraints}

\cref{alg:doc} can be straightforwardly extended to incorporate control constraints. The most common case is that of box control limits $\underline{u} \leq u_k \leq \overline{u}$, which can be expressed as the general linear inequality constraint
\begin{equation*}
G u_k - \mu \leq 0 ,
\end{equation*}
with $G = \begin{bmatrix} -I \\ I \end{bmatrix}$ and $\mu = \begin{bmatrix} \underline{u} \\ -\overline{u} \end{bmatrix}$.

Since \cref{alg:doc} is applied at a solution $\boldsymbol{z}^*$, the control trajectory itself is fixed. Therefore, the control constraints can be partitioned into an active set and an inactive set (active meaning the equality holds). Let the active inequality constraints be given by $\widetilde{G} u_k - \widetilde{\mu} = 0$. This is equivalent to the condition $\delta u_k^{(i)} = 0$ if $u_k^{(i)} \in \{ \underline{u}, \overline{u} \}$~\citep{amos2018differentiable}, and is easily incorporated into the forward pass (\cref{alg:doc_fpass}).

General control inequality constraints of the form $g(u_k) \leq 0$ for $g$ differentiable can be handled similarly. This imposes the constraint $\pdv{\widetilde{g}}{u} \delta u_k = 0$ with $\widetilde{g}$ the active constraints, which can be seen by adding the control constraint to the Lagrangian and rederiving \cref{alg:doc}. In words, this condition restricts $\delta u_k$ to the null space of the Jacobian $\pdv{\widetilde{g}}{u}$, and can be incorporated into \cref{alg:doc} by reparameterizing the solution in terms of the QR decomposition of $\pdv{\widetilde{g}}{u}$ along the trajectory~\citep{grandia2023doc}.

\section{Gradient Error and Timing Comparisons}
\label{appendix:comparisons}
This supplementary section presents numerical comparisons between the proposed approach \cref{alg:doc} and the state-of-the-art works of \citet{amos2018differentiable}, \citet{dinev2022differentiable}, and \citet{jin2020pontryagin}. All algorithms are implemented in JAX~\citep{jax2018github} and run on a Mac M1 processor.

As described in \cref{subsec:precision}, the numerical precision of our approach is verified empirically on the Dubins vehicle, quadrotor, and robot arm systems. The Jacobian estimate error $\norm{J(\hat{\boldsymbol{z}}, \theta) - \frac{\partial \boldsymbol{z}^*}{\partial \theta}}$ is plotted as a function of the iterate error $\norm{\hat{\boldsymbol{z}} - \boldsymbol{z}^*}$ by running DDP and DOC on nominal trajectory optimization examples. We compare with the implicit derivative returned by Diff-MPC from \citep{amos2018differentiable} (equivalent to our Gauss-Newton approximation)  and an unrolled Jacobian that is computed by directly unrolling iterations of DDP through autodifferentiation. These results are presented in \cref{fig:dubins_error,fig:quad_error,fig:robot_arm_error} for the three systems, respectively.

Additionally, the average time in milliseconds over 100 iterations of each algorithm is presented below in \cref{fig:dubins_timing,fig:quad_timing,fig:robot_arm_timing}, followed by a breakdown showing which components of the algorithm contribute the most time (\cref{fig:dubins_timing_breakdown,fig:quad_timing_breakdown,fig:robot_arm_timing_breakdown}).

\newpage
\null
\vfill
\begin{figure}[H]
\centering
\begin{subfigure}[b]{0.5\textwidth}
    \centering
    \includegraphics[width=0.8\textwidth]{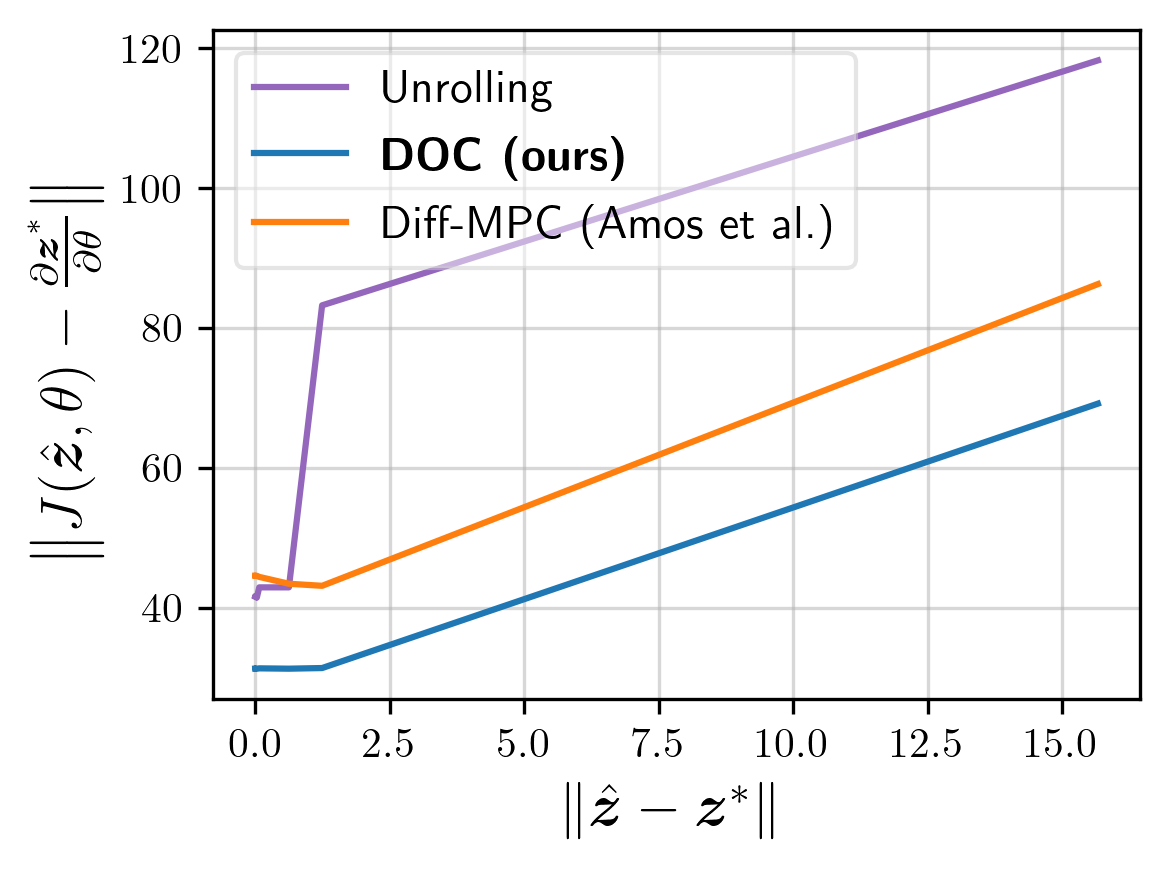}
    \caption{Jacobian estimate error.}
    \label{fig:dubins_error}
\end{subfigure}
\begin{subfigure}[b]{0.5\textwidth}
    \centering
    \includegraphics[width=1.0\textwidth]{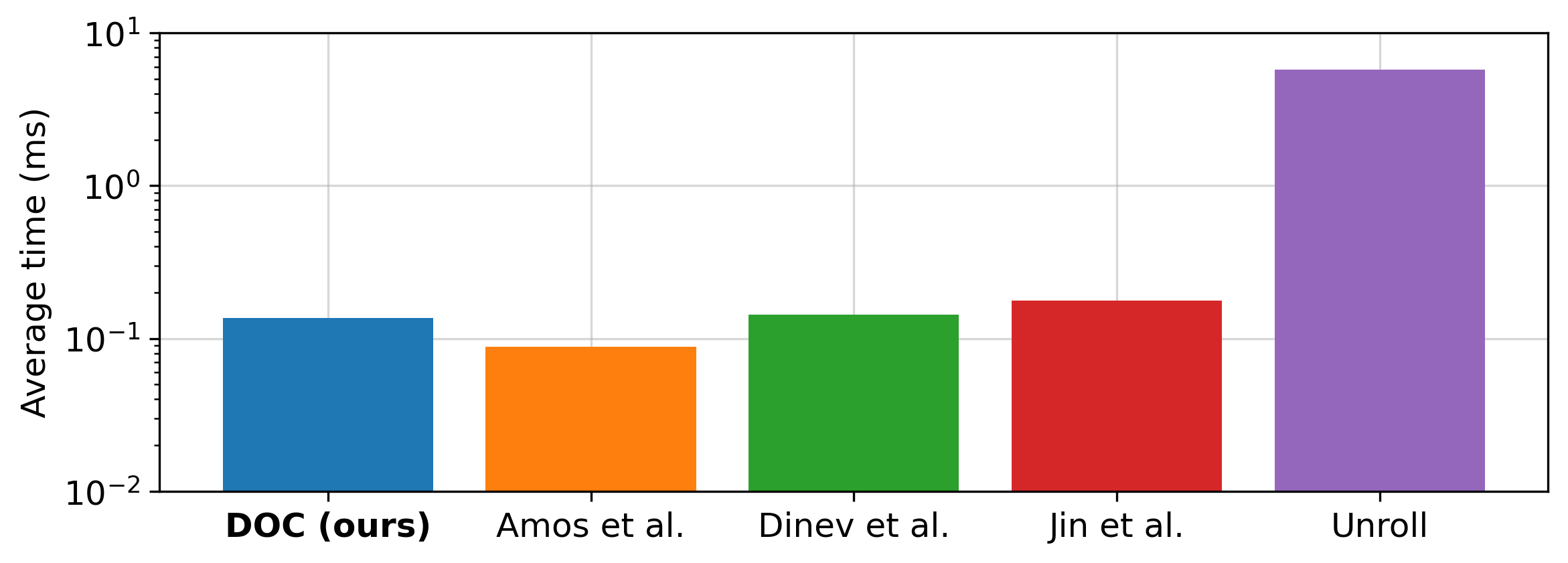}
    \caption{Timing comparison. Note the log scale on the y-axis.}
    \label{fig:dubins_timing}
\end{subfigure}
\begin{subfigure}[b]{0.5\textwidth}
    \centering
    \includegraphics[width=1.0\textwidth]{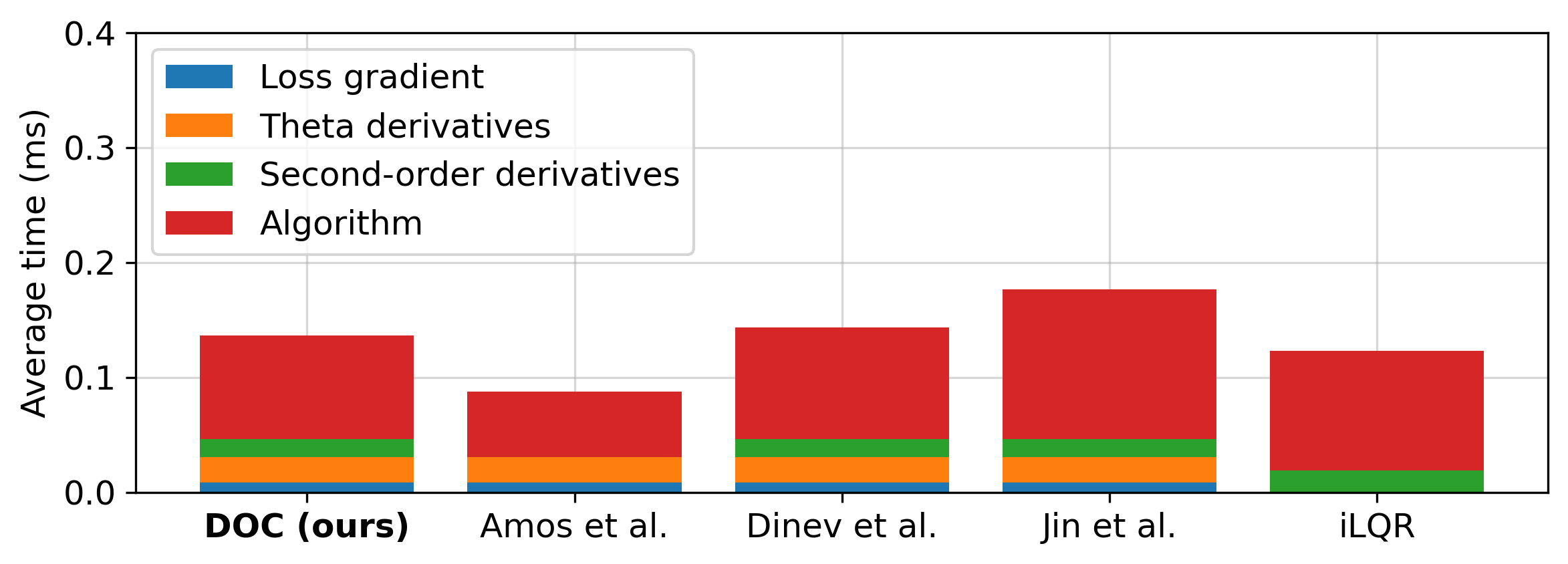}
    \caption{Timing breakdown. iLQR timings are shown for reference.}
    \label{fig:dubins_timing_breakdown}
\end{subfigure}
\caption{Dubins vehicle numerical comparisons.}
\label{fig:dubins_comparisons}
\end{figure}
\vfill
\newpage
\null
\vfill
\begin{figure}[H]
\centering
\begin{subfigure}[b]{0.5\textwidth}
    \centering
    \includegraphics[width=0.8\textwidth]{figures/grad_error_quadrotor.png}
    \caption{Jacobian estimate error (repeated from \cref{fig:grad_error} for reference).}
    \label{fig:quad_error}
\end{subfigure}
\begin{subfigure}[b]{0.5\textwidth}
    \centering
    \includegraphics[width=1.0\textwidth]{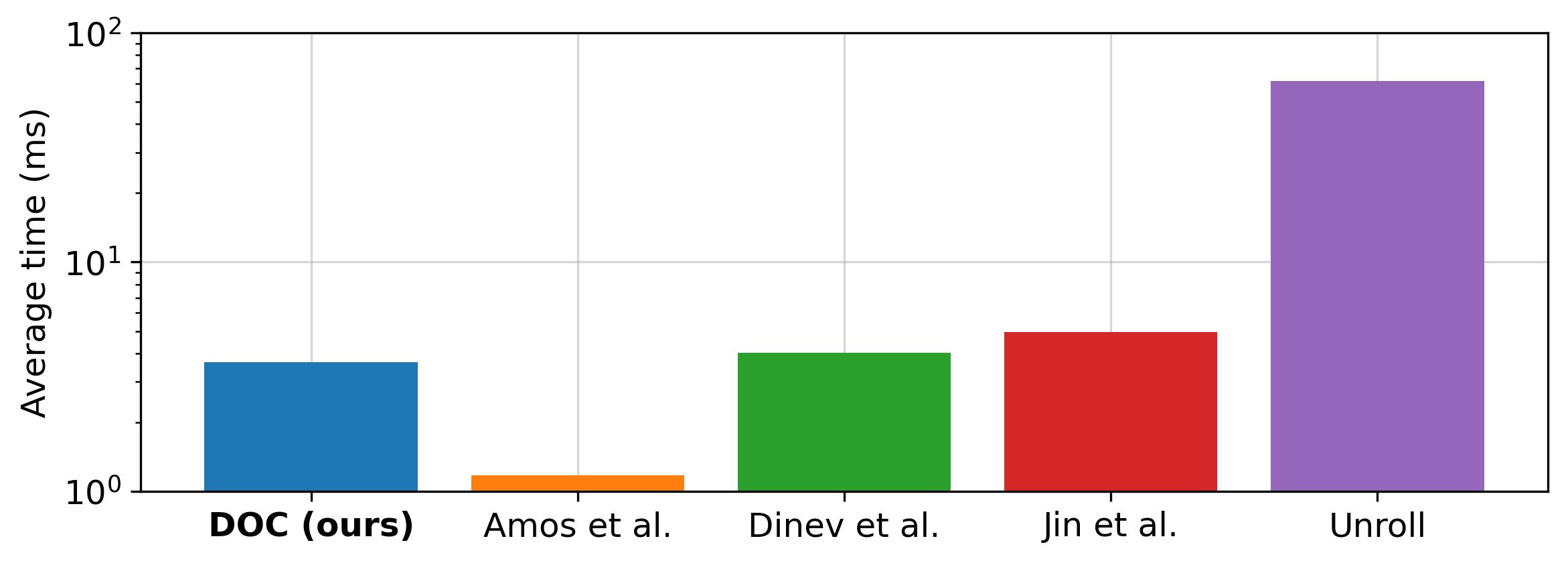}
    \caption{Timing comparison. Note the log scale on the y-axis.}
    \label{fig:quad_timing}
\end{subfigure}
\begin{subfigure}[b]{0.5\textwidth}
    \centering
    \includegraphics[width=1.0\textwidth]{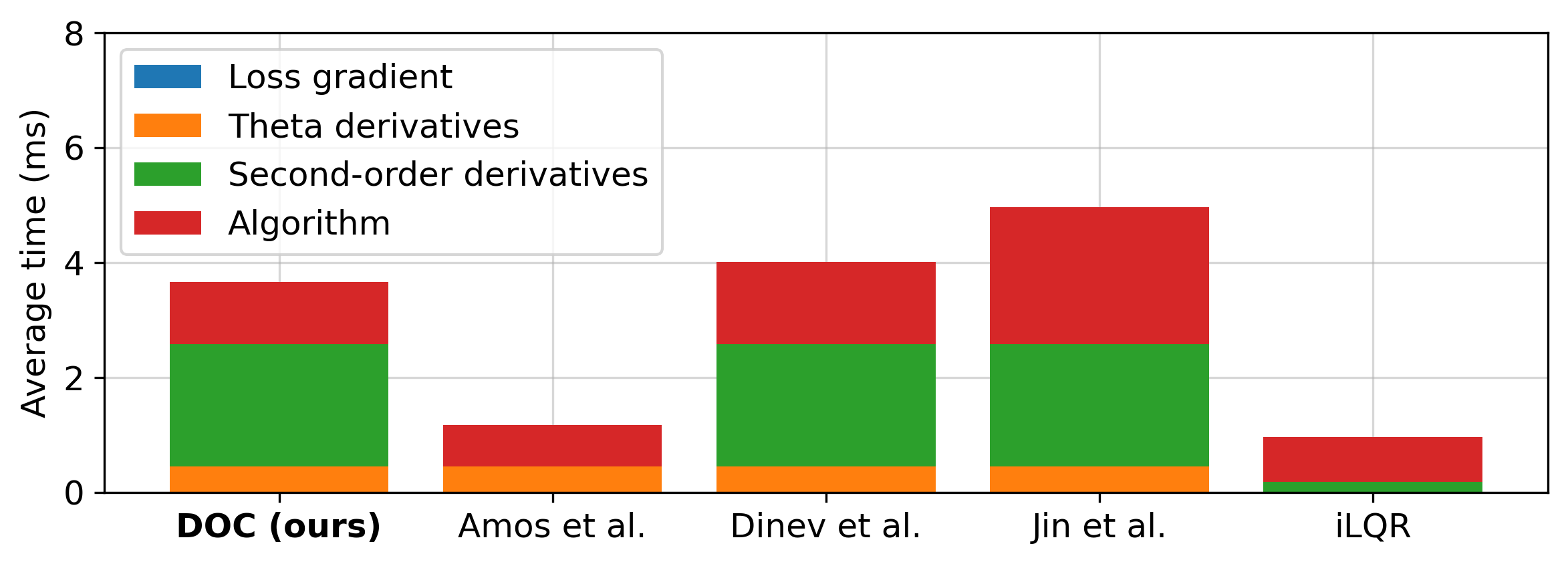}
    \caption{Timing breakdown. iLQR timings are shown for reference.}
    \label{fig:quad_timing_breakdown}
\end{subfigure}
\caption{Quadrotor numerical comparisons.}
\label{fig:quadrotor_comparisons}
\end{figure}
\vfill
\newpage
\vfill
\begin{figure}[H]
\centering
\begin{subfigure}[b]{0.5\textwidth}
    \centering
    \includegraphics[width=0.8\textwidth]{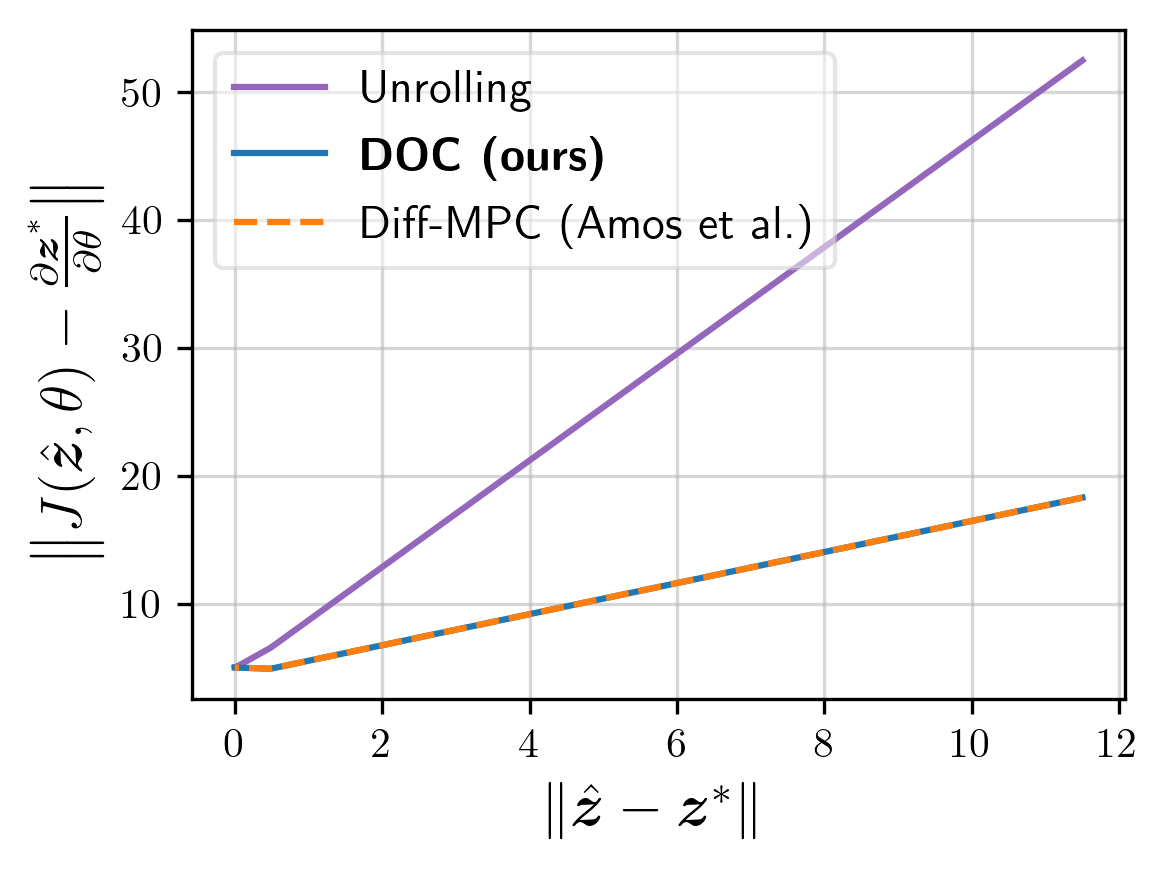}
    \caption{Jacobian estimate error.}
    \label{fig:robot_arm_error}
\end{subfigure}
\begin{subfigure}[b]{0.5\textwidth}
    \centering
    \includegraphics[width=1.0\textwidth]{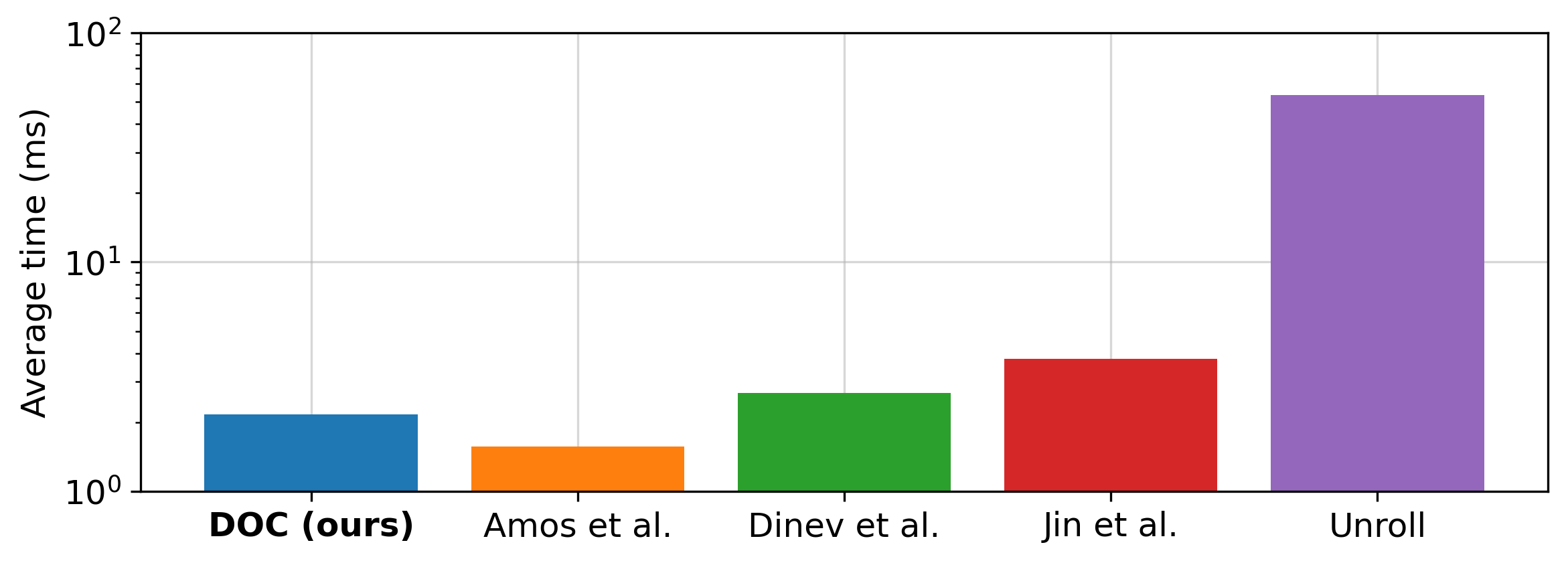}
    \caption{Timing comparison. Note the log scale on the y-axis.}
    \label{fig:robot_arm_timing}
\end{subfigure}
\begin{subfigure}[b]{0.5\textwidth}
    \centering
    \includegraphics[width=1.0\textwidth]{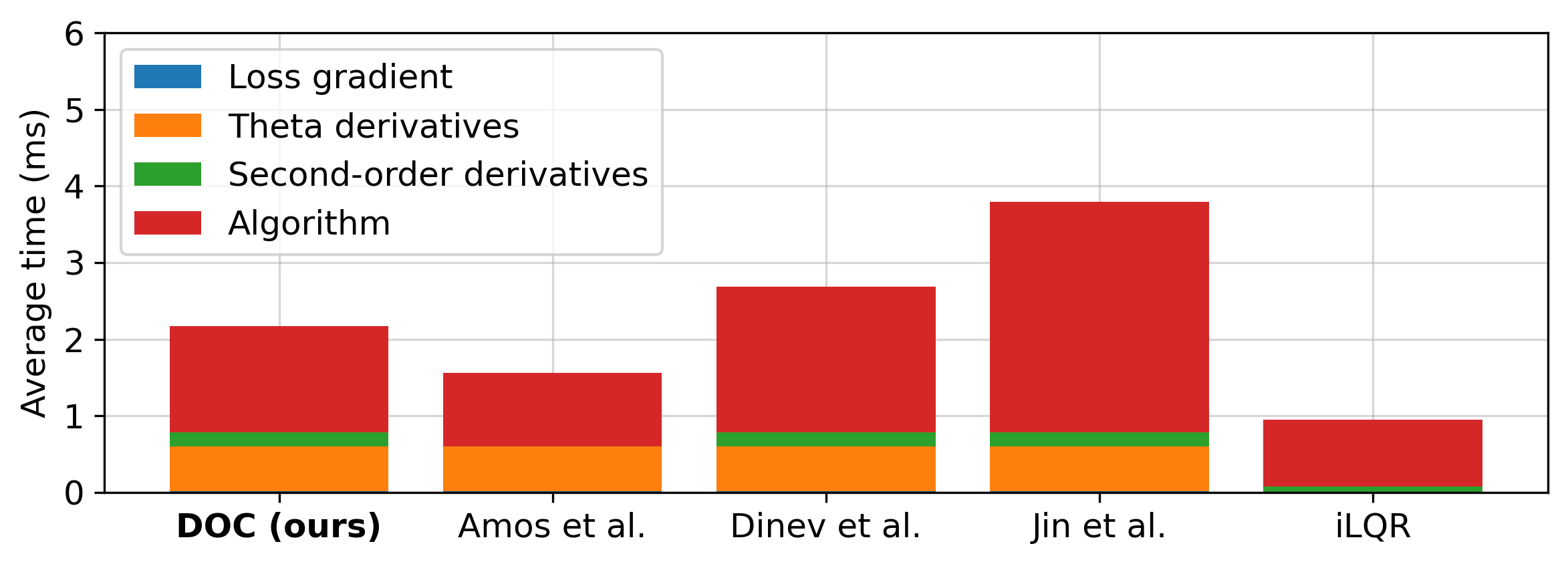}
    \caption{Timing breakdown. iLQR timings are shown for reference.}
    \label{fig:robot_arm_timing_breakdown}
\end{subfigure}
\caption{Robot arm numerical comparisons.}
\label{fig:robot_arm_comparisons}
\end{figure}
\vfill

As Diff-MPC \citep{amos2018differentiable} uses an LQ approximation to the control problem, their algorithm is able to achieve very fast timings. However, this results in inaccurate gradients, which is reflected in the high Jacobian estimate error compared to the proposed DOC algorithm. Note that in \cref{fig:robot_arm_error}, the Diff-MPC and DOC Jacobian errors align --- this is due to modeling the underlying dynamics of the system as a 6-dimensional double integrator system. This is a linear system, and therefore, the second-order dynamics derivatives are zero --- the nonlinearities in this system appear in the cost function design.

We highlight the fact that our algorithm is faster, as well as more memory efficient, compared to the work of \citet{dinev2022differentiable} due to the smart accumulation of the gradient during the forward pass of \cref{alg:doc}. This prevents needing to store intermediate vectors $\delta{x_t}$, etc. The PDP algorithm of~\citet{jin2020pontryagin} produces accurate gradients, but is slower than the other methods due to needing to solve a matrix control system.

Since the MuJoCo dynamics are not compatible with autodiff, comparisons on those systems are omitted. However, we provide a timing comparison between NT-MPC and the proposed DT-MPC on these two systems in \cref{fig:mujoco_timings}. Our proposed method is comparable in runtime with the baseline while vastly improving task performance and safety.
\begin{figure}[h]
\centering
\begin{subfigure}[b]{0.24\textwidth}
    \centering
    \includegraphics[width=0.99\textwidth]{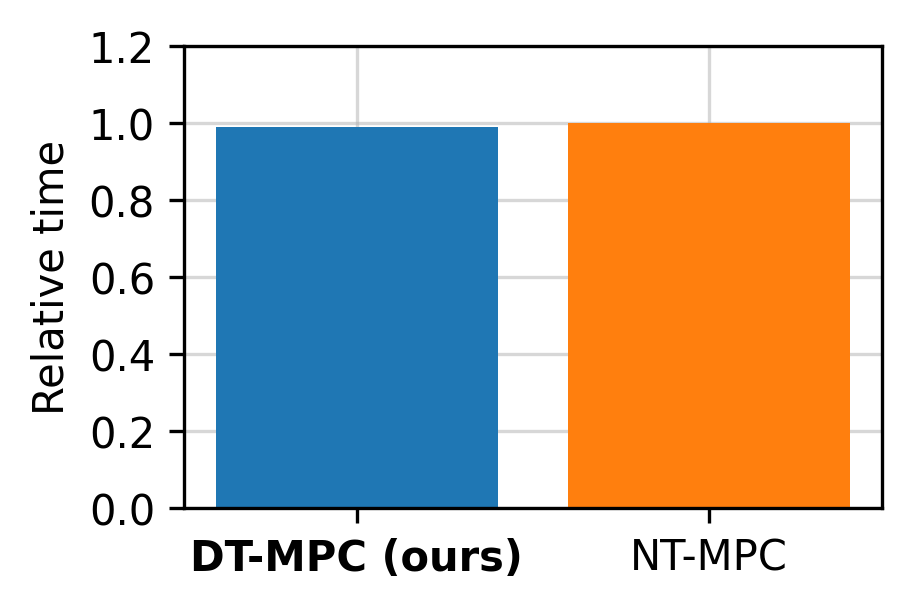}
    \caption{Cheetah system.}
\end{subfigure}\hfill\begin{subfigure}[b]{0.24\textwidth}
    \centering
    \includegraphics[width=0.99\textwidth]{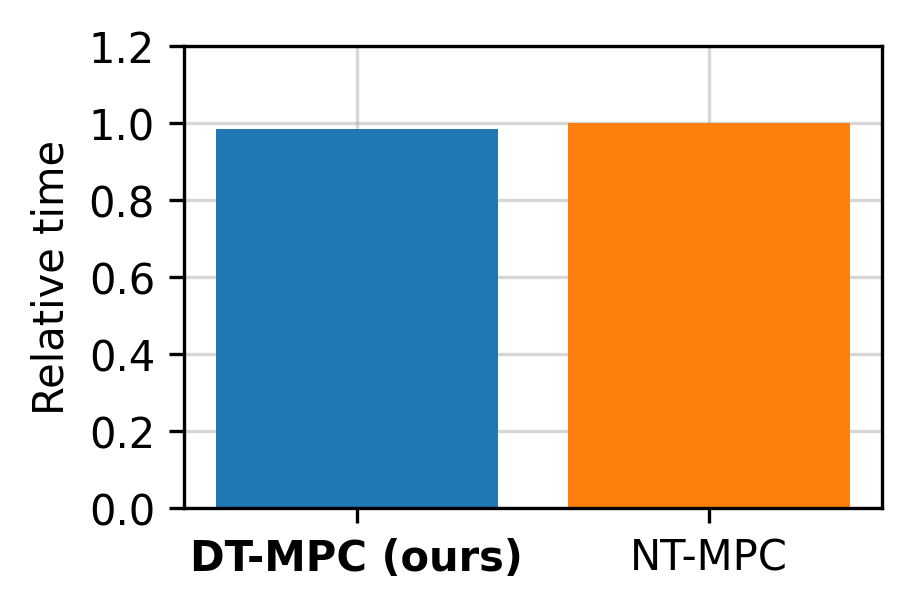}
    \caption{Quadruped system.}
\end{subfigure}
\caption{Timing comparison on MuJoCo systems. Values are normalized with the NT-MPC time corresponding to a value of 1 to show the relative speedup/slowdown of our approach.}
\label{fig:mujoco_timings}
\end{figure}

\section{Further Experimental Detail}
\label{appendix:experiments}

This section provides supplementary detail for the experiments performed in \cref{sec:experiments}. In particular, we define the varying parameterizations of each controller (i.e., what is allowed to adapt during MPC) to highlight the generality of the proposed method.

For both the nominal and ancillary controllers, we use DBaS-DDP~\citep{almubarak2022safety} and the control-limited DDP solver for handling box control limits~\citep{tassa2014control}, dropping the second-order dynamics terms (equivalent to the iLQR approximation). For all experiments, we use the (relaxed) inverse barrier. Additionally, we consider a ``computational budget'' of 10 iterations. This means NT-MPC can optimize both controllers for the full 10 iterations. However, our method optimizes the lower-level controllers for 9 iterations and uses the last to calculate the parameter gradients as long as either controller converged. Convergence in this sense is declared if the cost does not improve between iterations more than \num{1e-3}.


Of particular note is the choice of learning rate in \cref{alg:dt_mpc}. We find that learning rates smaller than $\eta = \num{1e-3}$ tend to suffer in both task completion as well as obstacle avoidance. On the other hand, we have found that a relatively large learning rate of $\eta = \num{1e-2}$ enables fast response to disturbances, especially as the system approaches an obstacle. To improve the efficiency of the method, we adopt a Nesterov momentum scheme~\citep{nesterov2003introductory}, but it should be noted that vanilla gradient descent works well in practice.

For all experiments, the nominal controller has a cost function dependent on the task being solved, while the ancillary controller has the general form $\ell = \norm{x_k - \bar{x}_k}_Q^2 + \norm{u_k - \bar{u}_k}_R^2 + q_b b_k^2$ and $\phi = \norm{x_N - \bar{x}_N}_Q^2 + q_b b_N^2$, with weights generally initialized to all ones.
To avoid invalid control solutions (i.e., for DDP $R$ must be positive definite), we constrain the parameters (when applicable) through projected gradient descent to ensure: $Q_{ii} \geq 0$, $R_{ii} \geq \num{1e-4}$, ${q_b \in [0, 1]}$, $\gamma \in [-1, 1]$, and $\alpha \geq 0$.

\subsection{Dubins Vehicle}
The Dubins vehicle is a nonlinear system with three states (xy position and yaw angle $\theta$) and two controls (linear velocity $v$ and angular velocity $\omega$). The controls are constrained such that $|v|~\leq$~\SI{10}{\meter\per\second} and $|\omega|~\leq~\pi$~\SI{}{\radian\per\second}. The dynamics are discretized with a time step of $\Delta t = 0.01$. The controller horizon is $N = 50$ corresponding to a planning horizon of \SI{0.5}{\second} long. The task is run for $H = 300$ time steps, or until success or failure occurs. Success in \cref{table:percentages} is defined as arriving within a \SI{0.25}{\meter} circle of the target, while failure is defined as colliding with an obstacle.

The nominal cost is fixed and given by $\bar{\ell} = \norm{x_k - x_\text{target}}_{\bar{Q}}^2 + \norm{u_k}_{\bar{R}}^2 + \bar{q}_b b_k^2$ and $\bar{\phi} = \norm{x_N - x_\text{target}}_{\bar{Q}_f}^2 + \bar{q}_b b_N^2$,
where ${x_\text{target} = (10, 10, \pi/4)}$, ${\bar{Q} = \diag(1, 1, 0)}$, ${\bar{R} = \diag(1, 1)}$, ${\bar{Q}_f = \diag(1000, 1000, 1000)}$, $\bar{q}_b = 1$. The ancillary cost is initialized to all ones and allowed to adapt through minimization of \cref{eq:dt_mpc_loss}. For this experiment, we use the inverse barrier (relaxed inverse barrier with $\alpha = 0$) and fix $\gamma = 0$.


\subsection{Quadrotor}

The obstacle field in \cref{fig:quadrotor_comparison} is generated as follows. 30 spherical obstacles are created by sampling a center point from the range $[0, 10]$~\SI{}{\meter} in all three axes and a radius from the range $[0.5, 1.5]$~\SI{}{\meter}. An additional obstacle with radius \SI{1.5}{\meter} is placed at the center point $(5, 5, 5)$~\SI{}{\meter} between the origin and the target to avoid trivial solutions.

Success for this task is defined as arriving within a \SI{0.5}{\meter} sphere of the target, while failure is defined as colliding with an obstacle or leaving the bounds $[-2, 12]$~\SI{}{\meter} in any of the three axes.

The dynamics model is adapted from \citet{sabatino2015quadrotor} and contains 12 states --- xyz position, orientation expressed as Euler angles, and linear and angular velocities --- and four controls --- thrust magnitude and pitching moments. For simplicity, unity parameters (\SI{1}{\kilogram} mass, identity inertia matrix, etc.) are adopted. The thrust is limited to the range $[0, 50]$ \SI{}{\newton} and the pitching moments to $[-10, 10]$ \SI{}{\newton\meter}. The dynamics are discretized with $\Delta t = 0.02$ and the MPC horizon is $N = 50$ for both controllers corresponding to a \SI{1}{\second} planning horizon. The task is run for $H = 300$ time steps, or until success or failure occurs.

Similar to the Dubins vehicle experiment, the nominal cost is fixed and given by $\bar{\ell} = \norm{x_k - x_\text{target}}_{\bar{Q}}^2 + \norm{u_k}_{\bar{R}}^2 + \bar{q}_b b_k^2$ and $\bar{\phi} = \norm{x_N - x_\text{target}}_{\bar{Q}_f}^2 + \bar{q}_b b_N^2$,
where $\bar{Q} = \diag(\ones(12))$, $\bar{R} = \diag(\ones(4))$, $\bar{Q}_f = 1000 * \diag(\ones(12))$, and $\bar{q}_b = 1$. The target state is $x_\text{target} = (10, 10, 10, 0, 0, 0, 0, 0, 0, 0, 0, 0)$. The ancillary cost is initialized to all ones and adapted online through minimization of \cref{eq:dt_mpc_loss}. The inverse barrier (relaxed inverse barrier with $\alpha = 0$) is used and $\gamma = 0$ is fixed.

\subsection{Robot Arm}

This system has 12 states corresponding to the orientation $\theta$ (pitch and yaw angles) and angular velocities $\dot{\theta}$ of each of the sections of the arm, and six controls corresponding to the torques $\ddot{\theta}$ applied at each joint. The torques are limited to $|\ddot{\theta}| \leq 10$ \SI{}{\newton\meter}. The three links of the arm have lengths \SI{1}{\meter}, \SI{1.5}{\meter}, and \SI{1}{\meter}, respectively. The obstacles visualized in \cref{fig:robot_arm_env} are fixed with centers $(1, 0)$, $(1, 1.5)$, $(1, -1.5)$, $(2, -2)$, and $(2, 2)$~\SI{}{\meter}, each with radius \SI{0.5}{\meter} --- a \SI{0.5}{\meter} gap exists on either side of the central obstacle that the arm can safely pass through. At the start of each trial, the arm begins in a random feasible starting orientation which is generated by sampling angles from the range $[-\pi, \pi]$~\SI{}{\radian} and rejecting configurations with the arm below the xy-plane or conflicting with obstacles.

The dynamics are discretized with a time step of $\Delta t = 0.02$, and the MPC time horizon for both controllers is 50 time steps, corresponding to a \SI{1}{\second} planning horizon. The task is run for at most $H = 400$ time steps, or until success (the end effector enters a \SI{0.25}{\meter} sphere of the target location) or failure (any link of the arm collides with an obstacle \emph{or} the arm goes below the xy-plane) occurs.

The nominal cost function here is reparameterized in terms of the end effector position $e_k$. The running cost is given as $\bar{\ell} = \norm{e_k - e_\text{target}}_{\bar{Q}}^2 + \norm{u_k}_{\bar{R}}^2 + \bar{q}_b b_k^2$ and the terminal cost $\bar{\phi} = \norm{e_N - e_\text{target}}_{\bar{Q}_f}^2 + \bar{q}_b b_N^2$, with $\bar{Q} = 100 * \diag(\ones(3))$, $\bar{R} = 100 * \diag(\ones(6))$, $\bar{Q}_f = 10000 * \diag(\ones(3))$, $\bar{q}_b = \num{1e-3}$, and $e_\text{target} = (2, 0, 1)$. The ancillary cost is the same as described previously and initialized to all ones, except for the barrier weight which is initialized as $q_b = \num{1e-3}$.

\subsection{Cheetah}
The cheetah system has 18 states and 6 controls with $\Delta t = 0.01$ and control limits $u_i \in [-1, 1]$. The controller plans for $N = 50$ time steps with an allotted task time of $H = 300$. The nominal cost is given as $\bar{\ell} = \bar{q} (p_x^{(k)} - 5)^2 + \norm{u_k}_{\bar{R}}^2 + \bar{q}_b b_k^2$ and $\bar{\phi} = \bar{q}_f (p_x^{(N)} - 5)^2 + \bar{q}_b b_N^2$, where $p_x^{(k)}$ is the x-position at time $k$, and $\bar{q} = 1$, $\bar{R} = 0.01 * \diag(\ones(6))$, $\bar{q}_f = 100$, and $q_b = 1$. Notably, we fix the terminal cost but allow the remaining parameters to adapt online during MPC --- this allows the DT-MPC to improve the task completion percentage without sacrificing safety. The ancillary controller is initialized to all ones. Additionally, we use the relaxed barrier with initialization $\gamma=0, \alpha = 0.1$ and allow the parameters to adapt along with the cost function weights. The use of the relaxed barrier here is beneficial as small violations of the pitch angle constraint are acceptable as long as the cheetah does not flip over. As stated in the main text, the loss for both controllers is $L = \norm{\boldsymbol{p}_x^* - \bar{\boldsymbol{p}}_x}_2^2 + \norm{\boldsymbol{b}^*}_2^2$, motivating safe task completion.

\subsection{Quadruped}
The quadruped system is a complex robotic system with 56 states and 12 controls. Each leg is a simplified biological model with an extending/contracting tendon. The controls for each leg consist of the yaw angle, lift, and extension of the leg. The dynamics are discretized with $\Delta t = 0.005$ and the MPC horizon is $N = 50$. The task is run for $H = 400$ time steps. 

The nominal cost is similar to the cheetah experiment, with $\bar{\ell} = \bar{q} (p_x^{(k)} - 2.5)^2 + \norm{u_k}_{\bar{R}}^2 + \bar{q}_b b_k^2$ and $\bar{\phi} = \bar{q}_f (p_x^{(N)} - 2.5)^2 + \bar{q}_b b_N^2$, and the weights given as $\bar{q} = 1$, $\bar{R} = 0.01 * \diag(\ones(6))$, $\bar{q}_f = 100$, and $q_b = 1$. The ancillary cost is initialized to all ones, except the barrier weight which is initialized to $q_b = 0.1$. The relaxed inverse barrier is adopted here with initial parameters $\gamma=0, \alpha = 1.0$, and both controllers minimize the loss $L = \norm{\boldsymbol{p}_x^* - \bar{\boldsymbol{p}}_x}_2^2 + \norm{\boldsymbol{b}^*}_2^2$ for adaptation.

\subsection{Hardware Experiment - Robotarium}
For the Robotarium experiment, we adopt the same Dubins vehicle model as used previously for planning and initialize five antagonistic agents who slowly move forwards with random velocity. The dynamics is discretized with $\Delta t = 0.033$ and the MPC horizon is $N = 50$ for an overall planning time of \SI{1.65}{\second} into the future. State constraints are added to ensure the robot stays within the desired operating region (dashed lines of \cref{fig:robotarium}) and avoids the other agents. The controller must run under \SI{33}{\milli\second} or at a rate of about \SI{30}{\hertz} in order for the control to be fast enough for real-time. To ensure this, we take advantage of the compilability of JAX~\citep{jax2018github}. 

For DT-MPC, the nominal parameters are fixed while the ancillary parameters are allowed to adapt. The loss function is chosen as $L = \norm{\boldsymbol{p}_x^* - \bar{\boldsymbol{p}}_x}_2^2 + \norm{\boldsymbol{p}_y^* - \bar{\boldsymbol{p}}_y}_2^2 + \norm{\boldsymbol{b}^*}_2^2$. This choice is due to the fact that the orientation of the robot relative to the nominal trajectory is not very informative for reaching the target. Similar to the cheetah and quadruped examples, we have observed that when the orientation is included in the loss, the DT-MPC controller defaults to a safe control but does not reach the target.

\end{appendices}

\end{document}